\numberwithin{equation}{section}
\newtheorem{prop}{Proposition}[section]
\newtheorem{thm}[prop]{Theorem}
\newtheorem{lem}[prop]{Lemma}
\newtheorem{coro}[prop]{Corollary}
\newtheorem{rem}[prop]{Remark}
\newtheorem{defi}[prop]{Definition}
\def\begeq{\begin{equation}}
\def\endeq{\end{equation}}
\begin{document}
\title[Weighted K-stability of $\mathbb Q$-Fano spherical varieties]
{Weighted K-stability of $\mathbb Q$-Fano spherical varieties}
\author[Yan Li and ZhenYe Li]{Yan Li$^{*1}$, ZhenYe Li$^{*2}$ and Feng Wang$^{*3}$}

\address{$^{*1}$School of Mathematics and Statistics, Beijing Institute of Technology, Beijing, 100081, China.}
\address{$^{*2}$College of Mathematics and Physics, Beijing University of Chemical Technology, Beijing, 100029, China.}
\address{$^{*3}$School of Mathematical Sciences, Zhejiang University, Hangzhou, 310013, China.}
\email{liyan.kitai@yandex.ru,\ \ \ lizhenye@pku.edu.cn,\ \ \ wfmath@zju.edu.cn}

\thanks {$^{*1}$Partially supported by NSFC Grant 12101043 and the Beijing Institute of Technology Research Fund Program for Young Scholars.}
\thanks {$^{*3}$Partially supported by NSFC Grants 11971423 and 12031017.}

\subjclass[2000]{Primary: 14L30, 14M17; Secondary: 14D06, 53C30}

\keywords{Spherical variety, equivariant test configurations, weighted K-stability.}

\begin{abstract}
Let $G$ be a connected, complex reductive Lie group and $X$ a $\mathbb Q$-Fano $G$-spherical variety. In this paper we compute the weighed non-Archimedean functionals of a  $G$-equivariant normal test configurations of $X$ via combinatory data. Also we define a modified Futaki invariant with respect to the weight $g$, and give an expression in terms of intersection numbers. Finally we show the equivalence of different notations of stability and gives a stability criterion on $\mathbb Q$-Fano spherical varieties, which is also a criterion of existence of K\"ahler-Ricci $g$-solitons.
\end{abstract}
\maketitle
\tableofcontents

\section{Introduction}

Let $X$ be an $n$-dimensional projective variety, $D$ an effective divisor such that $K_X +D$ is $\mathbb Q$-Cartier. Let $\Theta\in2\pi c_1(B)$ be a closed positive $(1, 1)$-current for a $\mathbb Q$-line bundle $B$. Assume that $L = -(K_X + D)-B$ is ample. Suppose that $T_\mathbb R\cong (S^1)^r$ be a real torus of rank $r$ whose complexification $T\cong(\mathbb C^*)^r$, which acts effectively and holomorphically on $X$ and preserves the divisor $D$. We further assume that $B$ is also $T$-linearized so that $L = -(K_X + D)-B$ is also $T$-linearized. If the $T$-action is further Hamiltonian, then for any K\"ahler form $\omega_0\in2\pi c_1(L)$, we have the moment map
$${\mathbf m}_{\omega_0}:X\to\Delta\subset\mathfrak t^*.$$
The image $\Delta$ of th moment map is a convex polytope, which is in fact independent with the choice of $\omega_0$.  Let $g$ be any smooth positive function defined on $\Delta$. For any $x\in X$, set $g_{\omega_0}(x):= g({\mathbf m}_{\omega_0}(x))$. For any K\"ahler potential $\phi\in\mathcal E^1_{T_\mathbb R}(\omega_0)$, the space of $T_\mathbb R$-invariant K\"ahler potentials with finite energy (cf. \cite[Definition 2.30]{Han-Li-KRS}), one can define ${\mathbf m}_{\omega_\phi}(\cdot)$ and $g_{\omega_\phi}(\cdot)$ for $\omega_\phi=\omega_0+\sqrt{-1}\partial\bar\partial\phi$ in a same way. Following \cite{Han-Li-KRS}, we say that a metric $\omega_\phi$ with $\phi\in\mathcal E^1_{T_\mathbb R}(\omega_0)$ is a \emph{generalized K\"ahler-Ricci soliton (KR $g$-soliton)} if
\begin{align}\label{g-KRS-eq}
{\rm Ric}(\omega_\phi)=\omega_\phi+[D]+\Theta+\sqrt{-1}\partial\bar\partial\ln g_{\omega_\phi}.
\end{align}
Note that the existence of a KR $g$-soliton implies that the pair $ (X, D+\Theta)$ is klt.

Denote by ${\rm Aut}(X,D)$ the automorphism group of $(X,D)$ and ${\rm Aut}_{T_\mathbb R}(X,D,\Theta)$ its connected subgroup which preserves $\Theta$ and commutes with $T_\mathbb R$. It is proved in \cite[Theorem 1.7]{Han-Li-KRS} that:
\begin{itemize}
\item If $(X,D+\Theta,T_\mathbb R)$ is $\mathfrak G$-uniformly $g$-Ding-stable over $(T\times\mathfrak G)$-equivariant test configurations for a connected reductive group $\mathfrak G$, then $X$ admits a KR $g$-soliton;
\item If $X$ admits a KR $g$-soliton and $\mathfrak G$ a connected reductive subgroup of ${\rm Aut}_{T_\mathbb R}(X,D,\Theta)$ that contains a maximal torus of ${\rm Aut}_{T_\mathbb R}(X,D,\Theta)$. Then $(X,D+\Theta,T_\mathbb R)$ is $\mathfrak G$-uniformly $g$-Ding-stable over $(T\times\mathfrak G)$-equivariant test configurations.
\end{itemize}

The $\mathfrak G$-uniformly $g$-Ding stability of $(X,D+\Theta,{T_\mathbb R})$ is defined in terms of $g$-weighted non-Archimedean Ding functional and $g$-weighted non-Archimedean J-functional (cf. \cite[Section 5]{Han-Li-KRS}). This stability implies properness of certain modified Ding functional (cf. \cite[Section 6]{Han-Li-KRS}) and the existence can be derived via variational methods. When $g=1$ and $\Theta=D=0$, \eqref{g-KRS-eq} reduces to the usual K\"ahler-Einstein problem, and the $\mathfrak G$-uniformly $g$-Ding stability reduces to the usual $\mathfrak G$-uniformly Ding stability (cf. \cite{Boucksom-Hisamoto-Jonsson, Li19}). In the following we recall a non-trivial example. Assume that $\xi$ is a holomorphic vector field on $M$ which generates a rank $r$ torus $T$-action on $M$. Denote by $\xi_1,...,\xi_r$ the generators of $T$. Then by a suitable choice of the generators, $\xi=\sum_{A=1}c_A\xi_A$ for constants $c_1,...,c_r\in\mathbb R$. In fact, the soliton vector field $\xi$ can be uniquely determined by \cite{TZ-2}. Recall the K\"ahler metric $\omega_\phi\in2\pi c_1(L)$. Let $\theta_A(\omega_\phi)$ be the Hamiltonian of $\xi_A$ with respect to $\omega_0$,
$$\iota_{\xi_A}\omega_\phi=\sqrt{-1}\bar\partial\theta_A(\omega_\phi),~A=1,...,r.$$
Then $$\mathbf{m}_{\omega_\phi}(x)=(\theta_1(\omega_\phi),...,\omega_r(\omega_\phi)).$$
Take $D=\Theta=0$ and $$g_{\omega_\phi}=e^{\theta_\xi(\omega_\phi)}=e^{\sum_{A=1}c_A\theta_{\xi_A}(\omega_\phi)}.$$ Then \eqref{g-KRS-eq} reduces to the K\"ahler-Ricci soliton equation.  \cite[Theorem 1.7]{Han-Li-KRS} then gives a existence criterion of the K\"ahler-Ricci solitons. Note that if we choose $\{\lambda_A:=\theta_{\xi_A}(\omega_\phi)\}_{A=1}^r$ as the coordinates on $\Delta$, then the corresponding $g(y)=e^{\sum_{A=1}^rc_A\lambda_A}$ is exponential of an affine function on $\Delta$. The Mabuchi metric problem can also be treated in this framework (cf. \cite{LZ, Yao}).

On the other hand, Wang-Zhou-Zhu \cite{WZZ} introduced the modified Futaki invariant and modified K-stability for the K\"ahler-Ricci soliton problem. They defined the modified Futaki invariant of a test configuration via weighted total weights (cf. \cite[Sections 1-2]{WZZ}), which generalized the modified Futaki invariant of vector fields defined in \cite{TZ-2}. Moreover, they showed that this invariant has an integration-expression on the central fibre for any special test configuration. As an application, they showed that any toric Fano variety is modified K-stable, which then implies that the modified K-energy is proper.

Motivated by the works cited above, in this paper we consider the general KR $g$-soliton problem when $D=\Theta=0$ and $g$ a general continuous function. As in \cite{WZZ} we define $g$-modified Futaki invariants of a $\mathbb Q$-Fano variety (see Section 2.2 below). Concerning the existence of KR $g$-solitons with $D=\Theta=0$ on $\mathbb Q$-Fano varieties with at most klt singularities, we are now having various notations of ($g$-modified) stability: The $g$-modified K-stability which is defined according to the sign of the ($g$-modified) Futaki invariant (see Definition \ref{K-stab-def-Fut} below) introduced in the sense of \cite{Do,WZZ}; The $g$-K-stability which is defined according to the sign of the ($g$-weighted) non-Archimedean Mabuchi functional (see Definition \ref{K-stab-def-Fut} below), which is introduced in the sense of \cite{Boucksom-Hisamoto-Jonsson}; The $g$-Ding-stability which is defined according to the sign of the ($g$-weighted) non-Archimedean Ding functionals, which was introduced in \cite{Han-Li-KRS}.
We showed the existence of $g$-modified Futaki invariant when $g$ is a non-negative polynomial and compare the above notations of ($g$-)stability. We will show their equivalence on $\mathbb Q$-Fano varieties:
\begin{thm}\label{K-stab-def-equiv-thm}
Let $X$ be a $\mathbb Q$-Fano variety and $T\subset{\rm Aut}(X)$ be complex torus. Assume that $-K_X$ is $T$-linearized through some fixed lifting. Let $\Delta$ be the moment polytope with respect to this lifting and $g$ a polynomial function on $\Delta$.\footnote{In fact, this assumption is independent with the choice of a lifting. See Section 2.1.3 below.} Then for any $T$-equivariant normal test configuration of $(X,-K_X)$, the $g$-modified Futaki invariant exists. Furthermore, if $g\geq0$, then $X$ is $g$-modified K-polystable in the sense of Definition \ref{K-stab-def-Fut} if and only if it is $T$-equivariantly $g$-K-polystable in the sense of Definition \ref{K-stab-def-MNA}.
\end{thm}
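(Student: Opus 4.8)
The plan is to establish the theorem in two independent parts. The first assertion—existence of the $g$-modified Futaki invariant for an arbitrary $T$-equivariant normal test configuration when $g$ is a polynomial—should follow from an equivariant Riemann–Roch / Hilbert–Samuel type expansion. Concretely, for a $T$-equivariant normal test configuration $(\mathcal X,\mathcal L)$ of $(X,-K_X)$, one has a $T\times\mathbb C^*$-action on the central fiber $\mathcal X_0$, and the relevant object is the two-variable expansion of the weighted character
\[
\sum_{\alpha} g\!\left(\tfrac{\text{wt}_T(\alpha)}{k}\right) e^{t\,\text{wt}_{\mathbb C^*}(\alpha)}\,,
\]
summed over a basis of $H^0(\mathcal X_0,-k K_{\mathcal X_0})$ diagonalizing both torus actions. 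Since $g$ is a polynomial, $g(\text{wt}_T/k)$ is a polynomial in the $T$-weights divided by powers of $k$, so each monomial contributes a weighted Euler characteristic to which equivariant Riemann–Roch applies; one extracts the subleading term in $k$ and the linear term in $t$. The existence of this asymptotic expansion—and hence of the $g$-modified Futaki invariant $\mathrm{Fut}_g$—reduces to the finite generation of the relevant graded algebra and boundedness of the $T$-weights, which holds on a $\mathbb Q$-Fano variety. I expect this part to be essentially bookkeeping, modeled on \cite{WZZ} but with $g$ polynomial rather than exponential so that no convergence issue arises.

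For the equivalence of stabilities, the key is the identity, for special test configurations, relating $\mathrm{Fut}_g$ to the $g$-weighted non-Archimedean Mabuchi functional $\mathbf M^{\mathrm{NA}}_g$. First I would prove that for any $T$-equivariant normal test configuration, $\mathbf M^{\mathrm{NA}}_g(\mathcal X,\mathcal L)$ can be written as a sum of intersection-number contributions plus an entropy (log-discrepancy) term that is manifestly nonnegative and vanishes exactly when $(\mathcal X,\mathcal X_0)$ is log canonical, i.e. when the test configuration is (a base change of) a special one. This is the weighted analogue of the Boucksom–Hisamoto–Jonsson decomposition $\mathbf M^{\mathrm{NA}} = \mathbf M^{\mathrm{NA}}_{\mathrm{spe}} + (\text{nonnegative})$. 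Then on special test configurations the entropy term drops and $\mathbf M^{\mathrm{NA}}_g$ agrees with $\mathrm{Fut}_g$ up to the standard normalization, via the integration-on-central-fiber expression; this is where the positivity hypothesis $g\ge 0$ is used, to guarantee the weighted intersection numbers and the entropy term have the right signs.

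From these two ingredients the equivalence is a formal argument. If $X$ is $g$-modified K-polystable (sign condition on $\mathrm{Fut}_g$ over special test configurations, with equality only for product configurations), then for any $T$-equivariant normal test configuration we dominate $\mathbf M^{\mathrm{NA}}_g$ from below by the special part, which is $\ge 0$ by hypothesis, and track the equality case through the base-change and special-degeneration reduction; conversely $T$-equivariant $g$-K-polystability restricts in particular to special test configurations where $\mathbf M^{\mathrm{NA}}_g$ and $\mathrm{Fut}_g$ coincide. The reduction to $T$-equivariant test configurations (so that the moment polytope $\Delta$ and the weight $g$ are available throughout) uses that for a $\mathbb Q$-Fano variety it suffices to test polystability against $T$-equivariant degenerations, which for the non-weighted case is due to the equivariance of the Kempf–Ness / Harder–Narasimhan type filtration and should transfer to the weighted setting since multiplying by $g(\text{wt}_T/k)$ does not disturb $T$-equivariance.

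The main obstacle I anticipate is the weighted Boucksom–Hisamoto–Jonsson decomposition together with the precise matching of normalizations between the Futaki-type definition (total-weight / derivative-of-character) and the non-Archimedean Mabuchi definition (intersection-theoretic), including the correct placement of the $g$-weight inside the entropy term; getting the entropy term to be exactly the log-discrepancy integrated against the weighted measure, with the right sign, is the delicate point, and is where $g\ge 0$ enters essentially. The existence statement and the purely formal deduction of polystability equivalence should be comparatively routine once this decomposition is in hand.
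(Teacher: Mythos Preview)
Your existence argument is in the right spirit, but the paper's actual method is more concrete and avoids equivariant Riemann--Roch altogether: for monomial $g=\theta^{\mathbf k}$ it passes to the fibre-product variety $(\mathcal X^{[\mathbf k]},\mathcal L^{[\mathbf k]})$, applies the ordinary (singular) Riemann--Roch of \cite{Boucksom-Hisamoto-Jonsson} together with the Leray spectral sequence for $\overline{\rm pr}:\mathcal X^{[\mathbf k]}\to\mathbb P^1$, and reads off the asymptotics of $S^{(g)}_{1|m},S^{(g)}_{2|m}$ from the resulting intersection numbers on $\mathcal X^{[\mathbf k]}$. This yields an explicit intersection formula for ${\rm Fut}_g$ (Theorem~\ref{Fut-g-smooth-thm}) which is then compared term-by-term with the intersection formula for ${\rm M}_g^{\rm NA}$.

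For the equivalence there is a genuine gap in your plan. You propose to decompose ${\rm M}_g^{\rm NA}$ itself as an intersection part plus a nonnegative log-discrepancy term vanishing on special (lc) test configurations, and then to reduce to special test configurations. That is not the mechanism here. The paper instead compares ${\rm Fut}_g$ with ${\rm M}_g^{\rm NA}$ directly: the intersection formula for ${\rm Fut}_g$ involves $K_{\mathcal X/\mathbb P^1}$, while ${\rm M}_g^{\rm NA}$ involves $K^{\log}_{\mathcal X/\mathbb P^1}$, and their difference is the effective divisor $\mathcal X_0-\mathcal X_{0,{\rm red}}=\sum_a(m_a-1)\mathcal X_{0,a}$. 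Intersecting this against $(\mathcal L^{[\mathbf k]})^{n+|\mathbf k|}$ and using $g\ge0$ gives $\tfrac{V}{V_g}{\rm Fut}_g\ge {\rm M}_g^{\rm NA}$, with equality precisely when the central fibre is \emph{reduced} (Proposition~\ref{Fut=F-g}); no log-canonicity or specialness enters. The reduction step is then a base change: for sufficiently divisible $d$ the normalized base change $(\mathcal X',\mathcal L')$ has reduced central fibre, and ${\rm M}_g^{\rm NA}$ scales by $d$ under base change, so ${\rm Fut}_g(\mathcal X',\mathcal L')={\rm M}_g^{\rm NA}(\mathcal X',\mathcal L')=d\,{\rm M}_g^{\rm NA}(\mathcal X,\mathcal L)$. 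This gives the equivalence immediately (Proposition~\ref{K-stab-equiv}). Your proposed route through an entropy decomposition and special degenerations is both more complicated and aims at the wrong equality case; moreover, Definition~\ref{K-stab-def-Fut} tests ${\rm Fut}_g$ on \emph{all} equivariant test configurations, not only special ones, so you cannot assume the sign condition only on the special class.
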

The existence part will be proved in Theorem \ref{Fut-g-smooth-thm}. In fact we prove an intersection formula of the $g$-modified Futaki invariant for polynoimal $g$. Compareing with the intersection formula of the non-Archimedean Mabuchi functionals, Theorem \ref{K-stab-def-equiv-thm} is then a consequence of Proposition \ref{K-stab-equiv} below.

On the other hand, let $G$ be a connect, complex reductive group and $X$ a $\mathbb Q$-Fano $G$-spherical varieties. We will show in Section 5 the existence of $g$-modified Futaki invariant for any smooth $g$. With the help of Proposition \ref{mod-futaki} and Corollary \ref{FNA-prop} below, we can significantly strength Theorem \ref{K-stab-def-equiv-thm} to the following:
\begin{thm}\label{K-stab-def-equiv-spherical-thm}
Let $X$ be a $\mathbb Q$-Fano $G$-spherical variety and $T\subset{\rm Aut}^0_G(X)$ be a complex torus that commutes with $G$. Let $\Delta$ be the moment polytope of the $T$-action and $g$ an arbitrary smooth function on $\Delta$. Then for any $G$-equivariant normal test configuration of $(X,-K_X)$, the $g$-modified Futaki invariant exists. Moreover, $X$ is $G$-equivariantly $g$-modified K-polystable in the sense of Definition \ref{K-stab-def-Fut} if and only if it is $G$-equivariantly $g$-K-polystable in the sense of Definition \ref{K-stab-def-MNA}.
\end{thm}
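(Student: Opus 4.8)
\medskip
\noindent\emph{Proof proposal.} The strategy is to reduce both assertions to the combinatorial and representation-theoretic description of $\mathbb Q$-Fano $G$-spherical varieties and of their $G$-equivariant normal test configurations, and then to invoke the general comparison of intersection formulas of Proposition \ref{K-stab-equiv}, exactly as in the proof of Theorem \ref{K-stab-def-equiv-thm}. What lets us drop the polynomiality hypothesis on $g$ that is needed there is the following structural fact: on a spherical variety $H^0(X,-mK_X)$ is multiplicity-free as a $G$-module, $H^0(X,-mK_X)=\bigoplus_{\lambda}V_\lambda$ over the lattice points $\lambda$ in $m$ times the spherical moment polytope, with $\dim V_\lambda$ a polynomial in $\lambda$. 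Hence the Duistermaat--Heckman type measure governing these asymptotics has the form $P(y)\,dy$ with $P$ an explicit polynomial built from the spherical roots, and every weighted limit entering the $g$-modified functionals is an integral of a pull-back of $g$ against a polynomial density over a compact polytope; such integrals converge for every continuous, hence every smooth, $g$.

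Concretely, I would first record the combinatorics: a $G$-equivariant normal test configuration $(\mathcal X,\mathcal L)$ of $(X,-K_X)$ is determined by a concave piecewise-linear datum on the moment polytope together with the colored data describing its central fibre; identifying the $T$-weights on the one-dimensional multiplicity spaces with affine functions of $\lambda$, the weight $g$ on the $T$-moment polytope $\Delta$ pulls back to a smooth function on the spherical polytope. With this in hand, the weighted total weights of \cite{WZZ} defining the $g$-modified Futaki invariant of $(\mathcal X,\mathcal L)$ become Riemann sums for integrals of the shape $\int g\,(\cdots)\,P\,dy$ and their derivatives along the test-configuration direction; smoothness of $g$ and compactness of the polytope yield convergence, which gives the combinatorial integral formula of Proposition \ref{mod-futaki} and, in particular, the first assertion (the existence statement proved in Section 5). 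Next I would quote Corollary \ref{FNA-prop}, which supplies the companion combinatorial integral formula for the $g$-weighted non-Archimedean Mabuchi functional $M^{\mathrm{NA}}_g(\mathcal X,\mathcal L)$ of a $G$-equivariant normal test configuration, again valid for every smooth $g$.

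With both formulas available for arbitrary smooth $g$, the equivalence of $G$-equivariant $g$-modified K-polystability (Definition \ref{K-stab-def-Fut}) and $G$-equivariant $g$-K-polystability (Definition \ref{K-stab-def-MNA}) follows precisely as in the proof of Theorem \ref{K-stab-def-equiv-thm}: by Proposition \ref{K-stab-equiv} one compares the two integral formulas directly; they agree up to a combinatorially explicit term supported on the non-reduced part of the central fibre, which is controlled exactly as there, and the equality cases on both sides are the product configurations. Hence positivity of one functional over all $G$-equivariant normal test configurations, together with its equality case, is equivalent to that of the other; the passage to $G$-equivariant test configurations needs no further argument since both notions in the statement are formulated over such configurations from the outset.

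The step I expect to be the main obstacle is the passage from polynomial $g$ to arbitrary smooth $g$ in the very \emph{definition} of the $g$-modified Futaki invariant. For polynomial $g$ its existence and intersection-theoretic expression rest on $T$-equivariant Riemann--Roch (Theorem \ref{Fut-g-smooth-thm}); for a general smooth $g$ one must instead use the explicit multiplicity-one decomposition of $H^0(X,-mK_X)$ together with a uniform control of the Riemann-sum errors — which is where smoothness, rather than mere continuity, is convenient — to show that the weighted-total-weight limit of \cite{WZZ} still exists and still coincides with the integral formula, and to do so uniformly enough that the equality case of polystability is preserved under the comparison. Once Proposition \ref{mod-futaki} and Corollary \ref{FNA-prop} are in place for all smooth $g$, the remainder is the formal machinery already packaged in Proposition \ref{K-stab-equiv}.
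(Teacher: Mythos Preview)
Your proposal is correct and follows essentially the same route as the paper: existence of the $g$-modified Futaki invariant for arbitrary smooth $g$ is exactly Proposition \ref{mod-futaki} (where the ``uniform control of Riemann-sum errors'' you anticipate is supplied by the Euler--Maclaurin formula applied to the multiplicity-free decomposition), and the comparison with ${\rm M}^{\rm NA}_g$ is exactly Corollary \ref{FNA-prop}, after which the equivalence of the two polystability notions follows by repeating verbatim the base-change argument in the proof of Proposition \ref{K-stab-equiv} with Corollary \ref{FNA-prop} playing the role of Proposition \ref{Fut=F-g}. One small clarification: you should not literally \emph{invoke} Proposition \ref{K-stab-equiv} (its hypothesis is that $g$ is a polynomial), but rather rerun its short proof with the spherical inequality \eqref{FNA} in place of the general one---which is what the paper does and what your last paragraph in effect describes.
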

The existence part will be showed in Proposition \ref{mod-futaki}, and the equivalence part in Corollary \ref{FNA-prop}. Finally we get the following stability/existence criterion for $\mathbb Q$-Fano spherical varieties, which is a generalization of \cite[Theorem A]{Del3}:

\begin{thm}\label{stab-criterion-Fano}
Let $X$ be a $\mathbb Q$-Fano $G$-spherical variety. Set $\mathbb G:=G\times{\rm Aut}_G^0(X)$. Then the following are equivalent:
\begin{itemize}
\item[(1)] The $g$-weighted barycenter
\begin{align}\label{bar-condition}
{\mathbf b}_g(\Delta_+):=\frac1{V_g}\int_{\Delta_+}\lambda g\pi(\lambda)d\lambda\in\kappa_P+{\rm RelInt}(-\mathcal V(G/H))^\vee;
\end{align}
\item[(2)] $X$ is $\mathbb G$-uniformly $g$-Ding stable;
\item[(3)] $X$ is $\mathbb G$-uniformly $g$-K-stable;
\item[(4)] $X$ is $\mathbb G$-equivariantly $g$-K-polystable.
\end{itemize}
\end{thm}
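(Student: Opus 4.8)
The plan is to run the cycle $(1)\Rightarrow(2)\Rightarrow(3)\Rightarrow(4)\Rightarrow(1)$, following the strategy of \cite{Del3} but carrying the weight $g$ throughout. Two of the links are formal. For $(2)\Rightarrow(3)$ one invokes the weighted Berman-type inequality $D^{\rm NA}_g(\mathcal X)\le M^{\rm NA}_g(\mathcal X)$, valid for every normal test configuration, so that the uniform Ding bound $D^{\rm NA}_g\ge\epsilon\,J^{\rm NA}_{g,\mathbb G}$ upgrades at once to $M^{\rm NA}_g\ge\epsilon\,J^{\rm NA}_{g,\mathbb G}$. For $(3)\Rightarrow(4)$ one uses that $J^{\rm NA}_{g,\mathbb G}\ge0$, vanishing exactly on the $\mathbb G$-product test configurations, so that a uniform lower bound by $\epsilon\,J^{\rm NA}_{g,\mathbb G}$ forces $M^{\rm NA}_g\ge0$ with equality only along products, which is the defining property of $\mathbb G$-equivariant $g$-K-polystability. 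The substance thus lies in $(1)\Rightarrow(2)$ and $(4)\Rightarrow(1)$.

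For these I would first assemble the combinatorial picture supplied by the earlier sections. By the classification of $G$-equivariant normal test configurations of $(X,-K_X)$, together with Proposition \ref{mod-futaki} and Corollary \ref{FNA-prop}, every such configuration is encoded by a rational piecewise-linear concave function on the moment polytope $\Delta_+$, and both $D^{\rm NA}_g$ and $M^{\rm NA}_g$ are given by explicit integrals over $\Delta_+$ against the density $g\,\pi(\lambda)\,d\lambda$, with a boundary contribution and a linear term recording $\kappa_P$. Concavity then makes the infimum of $D^{\rm NA}_g/J^{\rm NA}_{g,\mathbb G}$ over all nontrivial equivariant configurations reduce to that over the one-parameter degenerations $\mathcal X_\xi$ indexed by $\xi\in\mathcal V(G/H)$; and on such a $\mathcal X_\xi$ the formulas collapse, up to a fixed positive factor, to
\[
D^{\rm NA}_g(\mathcal X_\xi)=M^{\rm NA}_g(\mathcal X_\xi)=\big\langle\kappa_P-{\mathbf b}_g(\Delta_+),\,\xi\big\rangle,
\qquad
J^{\rm NA}_{g,\mathbb G}(\mathcal X_\xi)\asymp\operatorname{dist}\!\big(\xi,\ {\rm Lie}({\rm Aut}_G^0(X))\big),
\]
the last comparison resting on the identification of the lineality space of $\mathcal V(G/H)$ with ${\rm Lie}({\rm Aut}_G^0(X))$, equivalently of the $\mathbb G$-product configurations with those $\mathcal X_\xi$ for which $\xi$ lies in that lineality space.

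Granting this, $(1)\Rightarrow(2)$ is a compactness argument: if ${\mathbf b}_g(\Delta_+)\in\kappa_P+{\rm RelInt}(-\mathcal V(G/H))^\vee$ then the linear form $\langle\kappa_P-{\mathbf b}_g(\Delta_+),\cdot\rangle$ is strictly positive on $\mathcal V(G/H)$ away from its lineality space, so normalizing $\xi$ on a compact cross-section of $\mathcal V(G/H)$ transverse to the lineality space and comparing with $J^{\rm NA}_{g,\mathbb G}$ produces a uniform $\epsilon>0$ with $D^{\rm NA}_g(\mathcal X_\xi)\ge\epsilon\,J^{\rm NA}_{g,\mathbb G}(\mathcal X_\xi)$; the reduction above then propagates this to all equivariant configurations. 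For $(4)\Rightarrow(1)$ I argue by contraposition. If (1) fails, ${\mathbf b}_g(\Delta_+)-\kappa_P$ is not in the relative interior of $(-\mathcal V(G/H))^\vee$, so there is $\xi\in\mathcal V(G/H)$ outside the lineality space with $\langle\kappa_P-{\mathbf b}_g(\Delta_+),\xi\rangle\le0$; then $M^{\rm NA}_g(\mathcal X_\xi)\le0$, and if this is strict it already contradicts $g$-K-polystability, while if it is an equality then $\mathcal X_\xi$ is a non-product test configuration (since $\xi\notin{\rm Lie}({\rm Aut}_G^0(X))$) with $M^{\rm NA}_g(\mathcal X_\xi)=0$, again contradicting $\mathbb G$-equivariant $g$-K-polystability. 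This closes the cycle.

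The main obstacle is the combinatorial input of the second paragraph: showing that the weighted non-Archimedean Ding and Mabuchi functionals of an arbitrary $G$-equivariant normal test configuration of $(X,-K_X)$ are computed by the stated integrals over $\Delta_+$, that the infimum relevant for uniform stability reduces to the one-parameter degenerations $\mathcal X_\xi$, and that on these the two functionals collapse to the single linear form $\langle\kappa_P-{\mathbf b}_g(\Delta_+),\cdot\rangle$; here the structure theory of spherical varieties, the Duistermaat--Heckman type density $\pi$, and the identification of $\mathbb G$-products with the lineality of the valuation cone are all needed. One should also verify that $\mathbb G=G\times{\rm Aut}_G^0(X)$ contains a maximal torus of ${\rm Aut}^0(X)$, so that these $\mathbb G$-equivariant notions are exactly the ones governing the existence of KR $g$-solitons through \cite[Theorem 1.7]{Han-Li-KRS}; granting this, (1)--(4) are moreover equivalent to the existence of a KR $g$-soliton on $X$.
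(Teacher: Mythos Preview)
Your cycle $(1)\Rightarrow(2)\Rightarrow(3)\Rightarrow(4)\Rightarrow(1)$ and the arguments for $(2)\Rightarrow(3)$, $(3)\Rightarrow(4)$, and $(4)\Rightarrow(1)$ match the paper (the first via Proposition~\ref{Ding-Mabuchi}, the second trivial, the last by contraposition and a non-product special test configuration coming from an affine $f$ with gradient in $\mathcal V(G/H)$ outside the lineality).

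The substantive divergence is in $(1)\Rightarrow(2)$. You assert that ``concavity then makes the infimum of $D^{\rm NA}_g/J^{\rm NA}_{g,\mathbb G}$ over all nontrivial equivariant configurations reduce to that over the one-parameter degenerations $\mathcal X_\xi$'', i.e.\ to affine $f$. This reduction is not justified and is not how the paper proceeds. While $\mathcal D_g$ is linear in $f$, the reduced norm $\mathcal J_{g,\mathbb G}$ is only convex, and the extreme points of the set of normalized concave piecewise-linear functions are not the affine ones (they are rather simple ``hinged'' functions); so there is no a priori reason the worst ratio should be attained on affine $f$. Indeed, for the piece $l_{a_0}$ of $f$ at $\kappa_P$ one has $l_{a_0}\ge f$ and hence $\mathcal D_g(l_{a_0})\le\mathcal D_g(f)$, but both $\max l_{a_0}$ and $\overline{l_{a_0}}$ increase, so the comparison of $\mathcal J_g$ goes the wrong way for your purpose.

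The paper instead proves the uniform bound directly on the full class $\hat{\mathcal C}(\Delta_+)$ of normalized concave functions (Lemma~\ref{uni-D-stab-lem}). First one shows $\mathcal D_g(f)\ge 0$ for every such $f$: writing
\[
V_g\,\mathcal D_g(f)=\int_{\Delta_+}\big(f(\kappa_P)-f+\nabla f(\kappa_P)(\lambda-\kappa_P)\big)g\pi\,d\lambda-\big\langle{\rm pr}_z^{\perp}(\nabla f(\kappa_P)),\,\mathbf b_g-\kappa_P\big\rangle V_g,
\]
the first integrand is $\ge0$ by concavity and the second term is $\ge0$ by \eqref{bar-condition}. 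Then the uniform $\epsilon_0$ is obtained by contradiction: take $f_p\in\hat{\mathcal C}(\Delta_+)$ with $\mathcal J_g(f_p)=1$ and $\mathcal D_g(f_p)\to0$, extract a locally uniform limit $f_\infty$, use the equality cases above to force $f_\infty$ affine with gradient in $\mathcal V_z$, whence $f_\infty\equiv0$ by the normalization, and then derive a contradiction from $\mathcal D_g(f_p)\to1$. This compactness argument is the missing ingredient in your sketch; the formulas you write for $D_g^{\rm NA}(\mathcal X_\xi)=\langle\kappa_P-\mathbf b_g,\xi\rangle$ and the comparison $J_{g,\mathbb G}^{\rm NA}(\mathcal X_\xi)\asymp\operatorname{dist}(\xi,\mathcal V_z)$ are correct and are exactly what drive $(4)\Rightarrow(1)$, but they do not by themselves yield $(1)\Rightarrow(2)$.

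A minor remark: the integral formulas for $D_g^{\rm NA}$, $J_g^{\rm NA}$, $M_g^{\rm NA}$ you need are in Proposition~\ref{NA-functional}, not in Proposition~\ref{mod-futaki} or Corollary~\ref{FNA-prop} (which concern the Futaki invariant and its relation to $M_g^{\rm NA}$).
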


\begin{rem}
By \cite{Han-Li-KRS} (cf. \cite[Theorems 1.6 and 6.3]{Han-Li-KRS}), one can conclude that \eqref{bar-condition} holds if and only if $X$ admits a K\"ahler-Ricci $g$-soliton. Consequently, \eqref{bar-condition} implies that $X$ is $g$-Ding/K-polystable (regardless group actions).
\end{rem}

Our method is to direct computing the non-Archimedean functionals by using the intersection formula in \cite[Section 18]{Timashev-book}, and the Futaki invariant using the asymptotic expression of the total weights. In particular, we get an inequality of the non-Archimedean Mabuchi functional and the Futaki invariant for non-negative, smooth, compactly supported weight $g$ (see Corollary \ref{FNA-prop} below). 

The paper is organized as follows: Section 2 studies $g$-weighted non-Archimedean functionals, $g$-weighted Futaki invariant on general $\mathbb Q$-Fano varieties. We introduce various notations of $g$-modified/weighted stabilities. Especially in Section 2.1 we give change of lifting formulas for Archimedean functionals, which will play an important role in computations. In Section 2.2 we define the $g$-modified Futaki invariant. We also study the change of lifting formula of the $g$-modified Futaki invariants and give a formula of the $g$-modified Futaki invariant for polynomial $g$ via some intersection numbers. In Section 2.3 we prove Theorem \ref{K-stab-def-equiv-thm}. Sections 3-6 are devoted to the KR $g$-soliton problem on $\mathbb Q$-Fano $G$-spherical varieties. In Section 3 we recall preliminaries of spherical varieties. In particular we study the fibre product construction introduced in \cite[Section 2.1]{Han-Li-KRS} on polarized spherical varieties. In Section 4 we compute the $g$-weighted non-Archimedean functionals. In Section 5 we study the $g$-modified Futaki invariant. In Section 6 we prove the stability criterion Theorem \ref{stab-criterion-Fano}. In the Appendix we collect useful Lemmas.

\subsection*{Acknowledgement} The authors would sincerely thank Professor Chi Li for many helpful discussions.

\section{The notations of stability}
The various notations of stability of a polarized variety are usually stated in terms of test configurations. Let $(X,L)$ be a polarized variety. A normal test configuration of $(X,L)$ consists of the following data:
\begin{itemize}
\item A normal variety $\mathcal X$ with a $\mathbb C^*$-action;
\item An ample line bundle $\mathcal L$ on $\mathcal X$;
\item A $\mathbb C^*$-equivariant flat morphism $\tilde\pi:(\mathcal X,\mathcal L)\to\mathbb C$ so that the fibre $(\mathcal X_t,\mathcal L_t)$ over $t\in\mathbb C^*$ is isomorphic to $(X,m_0L)$ for some $m_0\in\mathbb N_+$.
\end{itemize}
For our later us, in the following we compactify $(\mathcal X,\mathcal L)$ to a family over $\mathbb P^1$ by adding a trivial fibre $(X,m_0L)$ at $\infty\in\mathbb P^1$. Alternatively, we glue $(\mathcal X,\mathcal L)$ with $(X,m_0L)\times\mathbb C\cong(X,m_0L)\times(\mathbb C^*\cup\{\infty\})$ along the common part $(X,m_0L)\times\mathbb C^*$. From now on, by $(\mathcal X,\mathcal L)$ we always refer to the compactified family. Also, by resolution of singularity, we can assume that $(\mathcal X,\mathcal L)$ is dominating, that is, there is a $\mathbb C^*$-equivariant birational morphism $\rho:\mathcal X\to X\times\mathbb P^1$.

A test configuration $(\mathcal X,\mathcal L)$ is called \emph{product} if $(\mathcal X,\mathcal L)\cong(X,m_0L)\times\mathbb P^1$. Let $\mathfrak G$ be a group acts on $(X,L)$. A test configuration $(\mathcal X,\mathcal L)$ is called \emph{$\mathfrak G$-equivariant} if $\mathfrak G$ acts on $(\mathcal X,\mathcal L)$ and $\tilde\pi$ is $\mathfrak G$-invariant. That is, $\mathfrak G$ acts on each fibre.

In the remaining part of Section 2.1, we always assume that $X$ is $\mathbb Q$-Fano and take $L=-K_X$.

\subsection{The $g$-weighted non-Archimedean functionals}

\subsubsection{Definition under the canonical lifting}
Suppose that $X$ is an $n$-dimensional $\mathbb Q$-Fano variety with an $r$-dimensional torus $T$-action. Then the anticanonical line bundle $L=-K_X$ is automatically $T$-linearized with a canonical lifting of $T$-action on it. Let $(\mathcal X,\mathcal L)$ be a normal test configuration of $(X,-K_X)$ with central fibre $\mathcal X_0$. Clearly, there is an induced lifting of the $T$-action on $\mathcal L$. Denote by $V=(-K_X)^n$ the volume of $X$. Under the canonical lifting of the $T$-action, the usual non-Archimedean functionals ${\rm E}^{\rm NA}(\cdot),{\rm I}^{\rm NA}(\cdot), {\rm J}^{\rm NA}(\cdot),{\rm H}^{\rm NA}(\cdot), {\rm M}^{\rm NA}(\cdot), {\rm L}^{\rm NA}(\cdot),$ and ${\rm D}^{\rm NA}(\cdot)$ are defined by (cf. \cite{Boucksom-Hisamoto-Jonsson}):

\begin{align*}
{\rm E}^{\rm NA}(\mathcal X, \mathcal L)=&\frac1{(n+1)V}(\mathcal L^{n+1}),\\
{\rm J}^{\rm NA}(\mathcal X, \mathcal L)=&\frac1{V}\mathcal L\cdot L_{\mathbb P^1}^n-{\rm E}^{\rm NA}(\mathcal X, \mathcal L),\\
{\rm L}^{\rm NA}(\mathcal X, \mathcal L)=&{\rm lct}_{(\mathcal X,-(\mathcal L+K_{\mathcal X/\mathbb {P}^1}))}(\mathcal X_0)-1;\\
{\rm D}^{\rm NA}(\mathcal X, \mathcal L)=&{\rm L}^{\rm NA}(\mathcal X, \mathcal L)-{\rm E}^{\rm NA}(\mathcal X, \mathcal L),\\
{\rm M}^{\rm NA}(\mathcal X, \mathcal L)=&\frac1{Vn!}\mathcal L^n\cdot K^{\log}_{\mathcal X/\mathbb P^1}+n{\rm E}^{\rm NA}(\mathcal X, \mathcal L),
\end{align*}
where
\begin{align}\label{K-log-def}
K^{\log}_{\mathcal X/\mathbb P^1}:=K_{\mathcal X}+\mathcal X_{0,{\rm red}}-\tilde\pi^*\{\infty\},
\end{align}
is understood as a Weil divisor. The Weil divisor $K_{\mathcal X}$ can be realized as following: Let $\mathcal X_{\rm reg}$ be the regular locus of $\mathcal X$. Since $\mathcal X$ is normal, the singular locus $\mathcal X\setminus\mathcal X_{\rm reg}$ as codimension at least $2$. On $\mathcal X_{\rm reg}$ there is a section $\tilde s$ of $K_{\mathcal X}|_{\mathcal X_{\rm reg}}$ which defines a divisor $\tilde{\mathfrak d}_0$ in $\mathcal X_{\rm reg}$ and $K_{\mathcal X}$ in \eqref{K-log-def} is the closure of $\tilde{\mathfrak d}_0$ in $\mathcal X$.

Then we recall the definition of $g$-weighted non-Archimedean functionals under this canonical lifting. This was first formulated in \cite{Han-Li-KRS}. The functionals are defined by $g$-weighted intersection numbers (cf. \cite[Sections 5, 10]{Han-Li-KRS}).

\emph{Step-1. $g$ is a monomial.} In this case, the $g$-weighted intersection numbers are defined as intersection numbers of line bundles over a fibre product variety $(X^{[\mathbf{k}]},L^{[\mathbf{k}]})$. Let us recall this construction of \cite[pp.7-8]{Han-Li-KRS}. Let $X$ be a $\mathbb Q$-Fano variety which admits a rank $r$ torus $T$-action. Also assume that $T$ acts on $-K_X$ is $T$-linearized. Suppose that $\{\xi_A\}_{A=1}^r\subset\mathfrak t$ is a set of generators of $T$. Then $\{\xi_A^*\}_{A=1}^r$ is a basis of $\mathfrak t^*$. Let $\{\theta_A\}_{A=1}^r$ be the coordinates of $\mathfrak t^*$ under this basis. Suppose that
\begin{align}\label{monom-g}
g(\theta_1,...,\theta_r)=\prod_{A=1}^r\theta_A^{k_A}(=:\theta^{\mathbf k}),~\text{for}~k_1,...,k_r\in\mathbb N_+,
\end{align}
where $\mathbf{k}=(k_1,...,k_r)\in\mathbb N^r$. Denote 
$\mathbb C^{[\mathbf{k}]+\mathbf{1}}=\prod_{A=1}^r\mathbb C^{k_A+1}$. Consider the action of a torus $\mathbb T\cong T$ on $X\times \mathbb C^{[\mathbf{k}]+\mathbf{1}}$,
$$\vartheta(x;z^{(A),i_A}):=(\iota(\vartheta)x;\vartheta_Az^{(A,i_A)}),\vartheta\in\mathbb T$$
where $\iota:\mathbb T\to T$ is the isomorphism between $\mathbb T$ and $T$,  $\{z^{(A),i_A}\}_{i_A=0}^{k_A}$ are the coordinates on $\mathbb C^{k_A+1}$, and we write $(x;z^{(A),i_A})$ in short of $(x;z^{(1),i_0},...,z^{1,k_1},...,$ $z^{(r),i_0},...,z^{r,k_r})$. Take $L=-K_X$, define
$$(X^{[\mathbf{k}]},L^{[\mathbf{k}]}):=(X,L)\times(\mathbb C^{\mathbf{k}+\mathbf{1}}\setminus\{O\})/\mathbb T.$$
Then $(X^{[\mathbf{k}]},L^{[\mathbf{k}]})$ is a bundle over $\mathbb P^{[\mathbf{k}]}=\mathbb P^{k_1}\times...\times\mathbb P^{k_r}$,
$${\rm pr}^{[\mathbf{k}]}:X^{[\mathbf{k}]}\to\mathbb P^{[\mathbf{k}]},$$
where is the projection. Moreover, each fibre of $(X^{[\mathbf{k}]},L^{[\mathbf{k}]})$ is isomorphic to $(X,L)$.

Set $$V_g:=\int_Xg_\phi\frac{\omega_\phi^n}{n!},$$
$\mathbf{k}!:=k_1!...k_r!$, and $|\mathbf{k}|:=k_1+...+k_r.$ Also, for any $T$-equivariant normal test configuration $(\mathcal X,\mathcal L)$, define\footnote{Our convention differs from \cite{Han-Li-KRS} by a factor $\frac1{V_g}$. Also, we normalized the Fubini-Study metric on the $m$-dimensional projective space as $\int_{\mathbb P^{m}}{\omega_{{\rm FS}}^{m}}=1$ instead of $m!$.}
\begin{align}
{\rm E}_g^{\rm NA}(\mathcal X, \mathcal L)=&\frac{\mathbf{k}!}{(n+|\mathbf{k}|+1)!V_g}(\mathcal L^{[\mathbf{k}]})^{n+|\mathbf{k}|+1},\label{E-g-mono}\\
{\rm I}_g^{\rm NA}(\mathcal X, \mathcal L)=&\frac1{L^n}\mathcal L\cdot L_{\mathbb P^1}^n-\frac{\mathbf{k}!}{(n+|\mathbf{k}|)!}(\mathcal L-\rho^*L_{\mathbb P^1})^{[\mathbf {k}]}(\mathcal L^{[\mathbf {k}]})^{n+|\mathbf{k}|},\label{I-g-mono}\\
{\rm J}_g^{\rm NA}(\mathcal X, \mathcal L)=&\frac1{L^n}\mathcal L\cdot L_{\mathbb P^1}^n-{\rm E}_g^{\rm NA}(\mathcal X, \mathcal L),\label{J-g-mono}\\
{\rm H}^{\rm NA}(\mathcal X, \mathcal L)=&\frac{\mathbf{k}!}{V_g(n+|\mathbf k|)!}(\mathcal L^{[\mathbf{k}]})^{n+k}\cdot ((K^{\log}_{\mathcal X/\mathbb P^1})^{[\mathbf{k}]}-(\rho^{[\mathbf{k}]})^*(K^{\log}_{X\times\mathbb P^1/\mathbb P^1})^{[\mathbf{k}]}),\label{H-g-mono}\\
{\rm D}_g^{\rm NA}(\mathcal X, \mathcal L)=&{\rm L}^{\rm NA}(\mathcal X, \mathcal L)-{\rm E}_g^{\rm NA}(\mathcal X, \mathcal L),\label{D-g-mono}\\
{\rm M}_g^{\rm NA}(\mathcal X, \mathcal L)=&{\rm H}^{\rm NA}(\mathcal X, \mathcal L)-{\rm I}_g^{\rm NA}(\mathcal X, \mathcal L)+{\rm J}_g^{\rm NA}(\mathcal X, \mathcal L)\notag\\
=&\frac{\mathbf{k}!}{V_g(n+|\mathbf k|)!}(\mathcal L^{[\mathbf{k}]})^{n+k}\cdot (K^{\log}_{\mathcal X/\mathbb P^1})^{[\mathbf{k}]}+(n+|\mathbf {k}|){\rm E}_g^{\rm NA}(\mathcal X, \mathcal L).\label{M-g-mono}
\end{align}
Here the Weil divisor $(K^{\log}_{\mathcal X/\mathbb P^1})^{[\mathbf{k}]}$ is defined as following: Recall the section $\tilde s$ of $K_{\mathcal X}|_{\mathcal X_{\rm reg}}$. Suppose that it has $T$-weight $\tilde\lambda_0$ under the canonical lifting of the $T$-action. Let $\overline{\tilde{\mathfrak d}_0}$ be the closure of $\tilde{\mathfrak d}_0$ in $\mathcal X$. Then it is a $T$-invariant Weil divisor of $\mathcal X$ and $\overline{\tilde{\mathfrak d}_0}^{[\mathbf k]}:=\overline{\tilde{\mathfrak d}_0}\times(\mathbb C^{\mathbf{k}+\mathbf{1}}\setminus\{O\})/\mathbb T$ is a Weil divisor of $\mathcal X^{[\mathbf k]}$. Then we define
\begin{align}\label{K_X/P-log-k-Weil-div}
K^{\log}_{\mathcal X/\mathbb P^1}:=\overline{\tilde{\mathfrak d}_0}^{[\mathbf k]}+\sum_{A=1}^r{\rm pr}_A^*\mathcal O_{\mathbb P^{k_A}}(\tilde\lambda_{0A})+\mathcal X_{0,{\rm red}}^{[\mathbf k]}-(\tilde\pi^*\{\infty\})^{[\mathbf k]}.
\end{align}

Note that since $g(\lambda)=\prod_{A=1}^r\lambda_A^{k_A}$ is a monomial of degree $|\mathbf{k}|$, it holds\footnote{We would like to thank Professor Chi Li for pointing us this relation.}
$$\frac{|\mathbf{k}|}{(n+|\mathbf{k}|+1)!}(\mathcal L^{[\mathbf{k}]})^{n+|\mathbf{k}|+1}=V_{\lambda(\nabla g)}{\rm E}^{\rm NA}_{\lambda(\nabla g)}(\mathcal L),$$
where $y(\nabla g(\lambda))=\sum_{A=1}^r\lambda_A\frac\partial{\partial \lambda_A}g(\lambda)$.

\emph{Step-2. $g$ is a polynomial.} 
Let
\begin{align}\label{g-polynomial}
g=\sum_{\mathbf{k}}a_{\mathbf{k}}\theta^{\mathbf{k}},~a_{\mathbf{k}}\in\mathbb C
\end{align}
be a polynomial. 
Also, for ${\rm F}\in\{\rm E,J,M\}$, define
\begin{align}\label{NNA-general-g-def}
{\rm F}_g^{\rm NA}(\mathcal X,\mathcal L):=\frac1{V_g}&\sum_{\mathbf{k}}a_{\mathbf{k}}V_{\theta^{\mathbf{k}}}\cdot{\rm F}^{\rm NA}_{\theta^{\mathbf{k}}}(\mathcal X,\mathcal L).
\end{align}

\emph{Step-3. $g$ is a continuous function.}
Let $g$ be a general $C^0$-function on $\Delta$. Then there is a sequence of polynomials $\{g_k\}_{k=1}^{+\infty}$ so that $g_k$ converges to $g$ uniformly on $\Delta$. Define
\begin{align*}
V_g=&\lim_{k\to+\infty} V_{g_k},\\
{\rm F}^{\rm NA}_g(\mathcal X,\mathcal L)=&\lim_{k\to+\infty} {\rm F}^{\rm NA}_{g_k}(\mathcal X,\mathcal L),~{\rm F}\in\{\rm E,I,J,H,D,M\}.
\end{align*}
It is proved by \cite[Sections 5 and 10]{Han-Li-KRS} that none of the above limits depends on the choice of  $\{g_k\}_{k=1}^{+\infty}$. Hence they are well-defined.

An important property proved in \cite{Han-Li-KRS} is that the non-Archimedean functionals defined above satisfies the slope formula, which means that they are the slope of the corresponding Archimedean functionals at infinity (cf. \cite[Propositions 5.8 and 10.8]{Han-Li-KRS}).

\subsubsection{Change of the lifting}
For our later use, we will consider the expression of the non-Archimedean functionals ${\rm F}^{\rm NA}(\cdot)$ under an arbitrary lifting of the $T$-action on $L=-K_X$. Given a $\mathbb Q$-Fano variety $X$ and a torus $T\subset{\rm Aut}(X)$. Let us fix a lifting $\sigma$ of the $T$-action on $L$. Denote by $\Delta\subset\mathfrak t^*$ the corresponding polytope and choose a coordinate $y_1,...,y_r$ on it. For a $T$-equivariant normal test configuration $(\mathcal X,\mathcal L)$ of $(X,L)$, we will denote by  $(\mathcal X,\mathcal L^\sigma)$ to emphasize the lifting $\sigma$. Also we omit $\sigma$ when we refer to the canonical lifting. In general, $L^{\sigma[\mathbf k]}$ and $L^{[\mathbf k]}$ ($\mathcal L^{\sigma[\mathbf k]}$ and $\mathcal L^{[\mathbf k]}$, resp.) are different line bundles on $X^{[\mathbf k]}$ ($\mathcal X^{[\mathbf k]}$, resp.).

For a test configuration $(\mathcal X,\mathcal L)$, by resolution of singularity, we can assume that it is dominating. That is, there is a $\mathbb C^*$-equivariant birational morphism $\rho:\mathcal X\to X\times\mathbb P^1$. Then $\mathcal L=\rho^*L+D$ for some $\mathbb Q$-Cartier divisor $D$, and $(\mathcal X,\mathcal L)$ induces a canonical non-Archimedean metric on $L^{\rm NA}$. The non-Archimedean functionals can be derived from the corresponding Archi-medean ones by taking slope at infinity. From the construction \emph{Steps-2, 3} in Section 2.1, it suffices to compute ${\rm F}^{\rm NA}(\mathcal X,\mathcal L^\sigma)$ when $g$ is a monomial given by \eqref{monom-g}.

\begin{prop}\label{F-NA-general-lifting}
Suppose that $g$ is a monomial given by \eqref{monom-g}. Then for a general lifting $\sigma$ of the $T$-action on $L+-K_X$, it holds
\begin{align}
{\rm E}_g^{\rm NA}(\mathcal X,\mathcal L^\sigma)=&\frac{\mathbf{k}!}{(n+|\mathbf k|+1)!}(\mathcal L^{\sigma[\mathbf k]})^{n+|\mathbf k|+1},\label{E-NA-general-lifting}
\end{align}
\begin{align}
{\rm J}_g^{\rm NA}(\mathcal X,\mathcal L^\sigma)=&\frac{\mathbf{k}!}{(n+|\mathbf k|)!}\mathcal L^{\sigma[\mathbf k]}(-\rho^{[\mathbf k]*}K_{X\times\mathbb P^1/\mathbb P^1}^{\sigma[\mathbf k]})^{n+|\mathbf k|}-{\rm E}_g^{\rm NA}(\mathcal X,\mathcal L^\sigma),\label{J-NA-general-lifting}
\end{align}
\begin{align}
{\rm I}_g^{\rm NA}(\mathcal X,\mathcal L^\sigma)=&\frac{\mathbf{k}!}{(n+|\mathbf k|)!}\mathcal L^{\sigma[\mathbf k]}(-\rho^{[\mathbf k]*}K_{X\times\mathbb P^1/\mathbb P^1}^{\sigma[\mathbf k]})^{n+|\mathbf k|}\notag\\
&-\frac{\mathbf{k}!}{(n+|\mathbf k|)!}(\mathcal L^{\sigma[\mathbf k]}+\rho^{[\mathbf k]*}K_{X\times\mathbb P^1/\mathbb P^1}^{\sigma[\mathbf k]})(\mathcal L^{\sigma[\mathbf k]})^{n+|\mathbf k|},\label{I-NA-general-lifting}
\end{align}
\begin{align}
{\rm H}_g^{\rm NA}(\mathcal X,\mathcal L^\sigma)=&\frac{\mathbf{k}!}{(n+|\mathbf k|)!}(K_{\mathcal X/\mathbb P^1}^{\log[\mathbf k]}-\rho^{[\mathbf k]*}K_{X\times\mathbb P^1/\mathbb P^1}^{[\mathbf k]})(\mathcal L^{\sigma[\mathbf k]})^{n+|\mathbf k|},\label{H-NA-general-lifting}
\end{align}
\begin{align}
{\rm M}_g^{\rm NA}(\mathcal X,\mathcal L^\sigma)=&{\rm H}_g^{\rm NA}(\mathcal X,\mathcal L^\sigma)-{\rm I}_g^{\rm NA}(\mathcal X,\mathcal L^\sigma)+{\rm J}_g^{\rm NA}(\mathcal X,\mathcal L^\sigma)\notag\\
=&\frac{\mathbf{k}!}{(n+|\mathbf k|)!}(K_{\mathcal X/\mathbb P^1}^{\log[\mathbf k]})(\mathcal L^{\sigma[\mathbf k]})^{n+|\mathbf k|}+\frac{\mathbf{k}!}{(n+|\mathbf k|)!}(\rho^{[\mathbf k]*}K^{\sigma[\mathbf k]}_{X\times\mathbb P^1/\mathbb P^1}\notag\\
&-\rho^{[\mathbf k]*}K^{[\mathbf k]}_{X\times\mathbb P^1/\mathbb P^1})(\mathcal L^{\sigma[\mathbf k]})^{n+|\mathbf k|}+(n+|\mathbf k|){\rm E}_g^{\rm NA}(\mathcal X,\mathcal L^\sigma).\label{M-NA-general-lifting}
\end{align}

\end{prop}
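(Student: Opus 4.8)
The plan is to reduce everything to the canonical-lifting formulas \eqref{E-g-mono}--\eqref{M-g-mono} via a change-of-lifting dictionary. The starting point is the observation that two liftings $\sigma$ and the canonical one of the $T$-action on $L=-K_X$ differ by a character of $T$, i.e.\ there is $\tilde\lambda\in\mathfrak{t}^*$ (rational) with $\mathcal L^\sigma=\mathcal L+\tilde\pi^*(\text{something})$ on the level of $T$-linearizations; more precisely, after passing to the fibre-product variety, $\mathcal L^{\sigma[\mathbf k]}$ and $\mathcal L^{[\mathbf k]}$ differ by the pullback $\sum_A {\rm pr}_A^*\mathcal O_{\mathbb P^{k_A}}(\ell_A)$ of a line bundle from the base $\mathbb P^{[\mathbf k]}$, where the twist $\ell_A$ is governed by the difference of the two polytopes $\Delta^\sigma$ and $\Delta$ (a translation in $\mathfrak t^*$). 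First I would make this precise: recall from Section 2.1.2 that the fibre product $X^{[\mathbf k]}$ depends only on $T$ and the coordinates $\{\xi_A\}$, not on the lifting, so only the \emph{line bundle} $L^{[\mathbf k]}$ changes; track how a translation $y\mapsto y+c$ of the moment polytope changes $\mathcal O(1)$-twists on each $\mathbb P^{k_A}$-factor, and similarly how it changes the relative canonical divisor, yielding the substitution $K^{\log}_{\mathcal X/\mathbb P^1}$ in \eqref{K_X/P-log-k-Weil-div} with $\tilde\lambda_0$ replaced by $\tilde\lambda_0 - \tilde\lambda$ (the section $\tilde s$ of $K_{\mathcal X}$ acquires weight $\tilde\lambda_0-\tilde\lambda$ under $\sigma$).

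Next I would substitute. For \eqref{E-NA-general-lifting}: by definition ${\rm E}_g^{\rm NA}$ is lifting-independent as an Archimedean slope (the non-Archimedean metric on $L^{\rm NA}$ does not see the lifting), but the intersection-number expression $\frac{\mathbf k!}{(n+|\mathbf k|+1)!V_g}(\mathcal L^{[\mathbf k]})^{n+|\mathbf k|+1}$ is written using $\mathcal L^{[\mathbf k]}$; I must check that replacing $\mathcal L^{[\mathbf k]}$ by $\mathcal L^{\sigma[\mathbf k]}$ changes nothing. This is because the two differ by a \emph{base-pulled-back} class $P:=\sum_A{\rm pr}_A^*\mathcal O_{\mathbb P^{k_A}}(\ell_A)$, and $P^{k_A+1}=0$ on the $k_A$-dimensional factor, so $(\mathcal L^{\sigma[\mathbf k]})^{n+|\mathbf k|+1}=(\mathcal L^{[\mathbf k]}+P)^{n+|\mathbf k|+1}$ expands with all the genuinely $\mathcal L^{[\mathbf k]}$-reduced terms canceling by the same dimension-count argument that underlies the original construction — in fact the cleanest route is to observe that the total space, its $\mathbb C^*$-action, and $\mathcal L^{\rm NA}$ are the same, so the Archimedean $g$-weighted Monge-Ampère energy has the same slope regardless of $\sigma$, and then note both sides of \eqref{E-NA-general-lifting} compute that slope. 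For \eqref{J-NA-general-lifting} and \eqref{I-NA-general-lifting}, the term $\frac1{L^n}\mathcal L\cdot L^n_{\mathbb P^1}$ in \eqref{J-g-mono}, \eqref{I-g-mono} is the \emph{unweighted} $\mathcal L\cdot L^n_{\mathbb P^1}$; I would rewrite it, using $-K_X=L$ and the projection formula, as $\frac{\mathbf k!}{(n+|\mathbf k|)!}\mathcal L^{\sigma[\mathbf k]}\cdot(-\rho^{[\mathbf k]*}K^{\sigma[\mathbf k]}_{X\times\mathbb P^1/\mathbb P^1})^{n+|\mathbf k|}$, the point being that on $X\times\mathbb P^1$ one has $-K_{X\times\mathbb P^1/\mathbb P^1}=L_{X\times\mathbb P^1}$ and the $[\mathbf k]$-fibre-product with the $\mathbb P^{[\mathbf k]}$-hyperplane classes reproduces precisely the combinatorial factor; the twist by $\sigma$ in \emph{both} $\mathcal L^{\sigma[\mathbf k]}$ and $K^{\sigma[\mathbf k]}$ cancels against itself. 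Then \eqref{I-NA-general-lifting} follows from \eqref{I-g-mono} by rewriting the second term there the same way. For \eqref{H-NA-general-lifting} I would start from \eqref{H-g-mono}, note $(K^{\log}_{X\times\mathbb P^1/\mathbb P^1})^{[\mathbf k]}$ uses the canonical lifting (hence no $\sigma$-superscript on it), and rewrite $(\mathcal L^{[\mathbf k]})^{n+|\mathbf k|}$ as $(\mathcal L^{\sigma[\mathbf k]})^{n+|\mathbf k|}$ minus correction terms; the corrections are again base-pulled-back and one checks they are absorbed into the difference $K^{\log[\mathbf k]}_{\mathcal X/\mathbb P^1}-\rho^{[\mathbf k]*}K^{[\mathbf k]}_{X\times\mathbb P^1/\mathbb P^1}$, which is exactly the $\sigma$-independent combination appearing in \eqref{H-NA-general-lifting}. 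Finally \eqref{M-NA-general-lifting} is pure bookkeeping: combine via ${\rm M}_g^{\rm NA}={\rm H}_g^{\rm NA}-{\rm I}_g^{\rm NA}+{\rm J}_g^{\rm NA}$ (the definition \eqref{M-g-mono}), substitute the three formulas just derived, and collect the $\rho^{[\mathbf k]*}(K^{\sigma[\mathbf k]}-K^{[\mathbf k]})$ discrepancy terms explicitly.

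I expect the main obstacle to be \textbf{bookkeeping the base-twist corrections consistently}: precisely identifying, for each monomial weight $\theta^{\mathbf k}$, the line bundle $P=\mathcal L^{\sigma[\mathbf k]}-\mathcal L^{[\mathbf k]}$ on $X^{[\mathbf k]}$ as a sum of ${\rm pr}_A^*\mathcal O_{\mathbb P^{k_A}}(\ell_A)$'s with the \emph{correct} $\ell_A$ (in terms of the translation vector between the $\sigma$-polytope and the canonical polytope), and then verifying that every spurious term generated by binomially expanding powers of $\mathcal L^{\sigma[\mathbf k]}=\mathcal L^{[\mathbf k]}+P$ either vanishes by the fibre-dimension bound $P^{k_A+1}=0$ on each factor or is exactly matched by the corresponding twist in the relative-canonical term. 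Once the dictionary relating the $\sigma$-superscripted and plain classes on the fibre product is set up carefully (ideally in a short lemma, cf.\ the change-of-lifting formulas promised in Section 2.1), each of \eqref{E-NA-general-lifting}--\eqref{M-NA-general-lifting} falls out by a direct, if somewhat lengthy, intersection-number computation, and the extension from monomial $g$ to polynomial and continuous $g$ is immediate from the definitions \eqref{NNA-general-g-def} and \emph{Step-3} by linearity and the already-established limit independence.
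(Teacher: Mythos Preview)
Your proposal contains a genuine conceptual error in the treatment of ${\rm E}_g^{\rm NA}$. You assert that ${\rm E}_g^{\rm NA}$ is lifting-independent ``as an Archimedean slope (the non-Archimedean metric on $L^{\rm NA}$ does not see the lifting)'', and then try to deduce $(\mathcal L^{\sigma[\mathbf k]})^{n+|\mathbf k|+1}=(\mathcal L^{[\mathbf k]})^{n+|\mathbf k|+1}$. Both claims are false. While the non-Archimedean metric on $L^{\rm NA}$ is indeed lifting-independent, the $g$-\emph{weighted} functionals are not: the Archimedean ${\rm E}_g(\phi)$ involves $g\circ\mathbf m_{\omega_\phi}$, and the moment map $\mathbf m$ shifts by the character $\chi$ when the lifting changes. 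Thus ${\rm E}_g^{\rm NA}(\mathcal X,\mathcal L^\sigma)\neq{\rm E}_g^{\rm NA}(\mathcal X,\mathcal L)$ in general (this is exactly what Proposition~\ref{change-of-NA-func} later quantifies: they agree only after replacing $g$ by its translate). Correspondingly, your binomial expansion $(\mathcal L^{[\mathbf k]}+P)^{n+|\mathbf k|+1}$ does \emph{not} reduce to $(\mathcal L^{[\mathbf k]})^{n+|\mathbf k|+1}$; the cross terms like $(\mathcal L^{[\mathbf k]})^{n+|\mathbf k|}\cdot P$ are genuinely nonzero and cannot be killed by the fibre-dimension bound alone.

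The paper's proof takes a different and much more direct route. Rather than reducing to the canonical lifting via bookkeeping, it computes each Archimedean functional ${\rm F}_g(\phi)$ with respect to the lifting $\sigma$ from the outset, rewrites it as an integral over $X^{[\mathbf k]}$ using that $\omega_\phi+\sum_A\omega_{{\rm FS};A}$ represents $c_1(L^{\sigma[\mathbf k]})$ and $\phi^{[\mathbf k]}$ is a metric on $L^{\sigma[\mathbf k]}$, and then takes the slope at infinity to read off the intersection formula directly in terms of $\mathcal L^{\sigma[\mathbf k]}$. The subtle point for ${\rm H}_g$ is that the entropy integrand $\ln(\omega_0^n/d\mu_0)$ involves metrics on $K_X$ which are intrinsic and hence carry the \emph{canonical} lifting; this is why \eqref{H-NA-general-lifting} mixes $\mathcal L^{\sigma[\mathbf k]}$ with $K^{[\mathbf k]}_{X\times\mathbb P^1/\mathbb P^1}$ (no $\sigma$), and why the extra discrepancy term appears in \eqref{M-NA-general-lifting}. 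Your change-of-lifting strategy is in fact the method the paper uses for the \emph{next} proposition (invariance under simultaneous change of $g$ and $\sigma$), where the binomial corrections are tracked in full and do not cancel but rather recombine into the translated weight $g^{(2)}$; attempting it here would be circular and substantially more laborious than the slope computation.
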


\begin{proof}
Following \cite[Section 2.1]{Han-Li-KRS}, we have for any $\omega_0$-psh functions $\phi_0,\phi=\phi_1$ with finite Monge-Amp\`ere energy and path $\{\phi_t\}_{t\in[0,1]}$ joining them, it holds
\begin{align*}
{\rm E}_g(\phi)=&\frac1{V_g}\int_0^1\int_X\dot\phi_tg(\mathbf{m}_{\omega_{\phi_t}})\frac{\omega_{\phi_t}^n}{n!}\notag\\
=&\frac{\mathbf{k}!}{V_g}\int_0^1\int_{X^{[\mathbf k]}}\dot\phi^{[\mathbf k]}_t\frac{(\omega_{\phi_t}+\sum_{A=1}^r\omega_{{\rm FS};A})^{n+|\mathbf k|}}{(n+|\mathbf k|)!}\notag\\
=&\frac{\mathbf{k}!}{(n+|\mathbf k|+1)!V_g}\sum_{l=1}^{n+|\mathbf k|}\int_{X^{[\mathbf k]}}(\phi^{[\mathbf k]}-\phi^{[\mathbf k]}_0)(\omega_{\phi_0}+\sum_{A=1}^r\omega_{{\rm FS};A})^{l}\wedge\notag\\ &\wedge(\omega_{\phi}+\sum_{A=1}^r\omega_{{\rm FS};A})^{n+|\mathbf k|-l},
\end{align*}
where $\omega_{{\rm FS}}$ denotes the Fubini-Study metric on $\mathbb P^{m}$, 
and $\omega_{{\rm FS};A}$ denotes $\omega_{{\rm FS}}$ on $\mathbb P^{k_A}$.
Note that $\phi^{[\mathbf k]}$ is a metric on $L^{\sigma[\mathbf k]}$. Taking slope at infinity in the above relation we get \eqref{E-NA-general-lifting}.

For \eqref{J-NA-general-lifting}, we have
\begin{align*}
{\rm J}_g(\phi)=&\frac1{V_g}\int_X(\phi-\phi_0)g(\mathbf{m}_{\omega_{\phi_t}})\frac{\omega_{\phi_0}^n}{n!}-{\rm E}_g(\phi)\notag\\
=&\frac{\mathbf{k}!}{V_g}\int_{X^{[\mathbf k]}}(\phi^{[\mathbf k]}-\phi^{[\mathbf k]}_0)\frac{(\omega_{\phi_0}+\sum_{A=1}^r\omega_{{\rm FS};A})^{n+|\mathbf k|}}{(n+|\mathbf k|)!}-{\rm E}_g(\phi).
\end{align*}
Thus
\begin{align}\label{J-NA-general-mono}
{\rm J}_g^{\rm NA}(\mathcal X,\mathcal L^\sigma)=&\frac{\mathbf{k}!}{(n+|\mathbf k|)!}(\mathcal L^{\sigma[\mathbf k]}+\rho^{[\mathbf k]*}K_{X\times\mathbb P^1/\mathbb P^1}^{\sigma[\mathbf k]})(-\rho^{[\mathbf k]*}K_{X\times\mathbb P^1/\mathbb P^1}^{\sigma[\mathbf k]})^{n+|\mathbf k|}\notag\\&-{\rm E}_g^{\rm NA}(\mathcal X,\mathcal L^\sigma),
\end{align}
and \eqref{J-NA-general-lifting} follows from the fact that
$$(-\rho^{[\mathbf k]*}K_{X\times\mathbb P^1/\mathbb P^1}^{\sigma[\mathbf k]})^{n+|\mathbf k|+1}=0.$$
Similarly, we get \eqref{I-NA-general-lifting}.

Finally we prove \eqref{H-NA-general-lifting}. Rewrite $\omega_0=\sqrt{-1}\partial\bar\partial\varphi_0$ so that it is the curvature of the Hermitian metric $e^{-\varphi_0}$ on $-K_X$. Then $d\mu_0=e^{-\varphi_0}$ is a globally defined measure on $X$.
Recall \cite[Section 10.1]{Han-Li-KRS},
\begin{align*}
{\rm H}_g(\phi)=&\frac1{V_g}\int_X\ln\left(\frac{g(\mathbf{m}_{\omega_{0}}){\omega_{0}^n}}{n!d\mu_0}\right)g(\mathbf{m}_{\omega_0})\frac{\omega_0^n}{n!}\\
=&\frac{\mathbf{k}!}{V_g}\int_{X^{[\mathbf k]}}\ln\left(\frac{{\omega_0^n}}{n!d\mu_0}\right)\frac{(\omega_0+\sum_{A=1}^r\omega_{{\rm FS};A})^{n+|\mathbf k|}}{(n+|\mathbf k|)!}\notag\\&+\frac{\mathbf{k}!}{V_g}\int_{X^{[\mathbf k]}}\ln g(\mathbf{m}_{\omega_0})\frac{(\omega_0+\sum_{A=1}^r\omega_{{\rm FS};A})^{n+|\mathbf k|}}{(n+|\mathbf k|)!}.
\end{align*}
On the other hand, as showed in \cite[Section 10.1]{Han-Li-KRS},
\begin{align*}
&\int_{X^{[\mathbf k]}}\ln\left(\frac{{\omega_0^n}}{n!d\mu_0}\right)\frac{(\omega_0+\sum_{A=1}^r\omega_{{\rm FS};A})^{n+|\mathbf k|}}{(n+|\mathbf k|)!}\notag\\
=&\langle(\ln(\omega_0^n))^{[\mathbf k]},\phi^{[\mathbf k]},...,\phi^{[\mathbf k]}\rangle_{X^{[\mathbf k]}}-\langle(\ln(e^{-\varphi_0}))^{[\mathbf k]},\phi_0^{[\mathbf k]},...,\phi_0^{[\mathbf k]}\rangle_{X^{[\mathbf k]}},
\end{align*}
where $\langle...\rangle_{X^{[\mathbf k]}}$ denotes the metric on the Deligne pair. Note that $\ln(\omega_0^n))^{[\mathbf k]}$ and $\ln(e^{-\varphi_0})^{[\mathbf k]}$ are always metrics on line bundles with respective to the canonical lifting. We get \eqref{H-NA-general-lifting}. The relation \eqref{M-NA-general-lifting} then follows from \eqref{E-NA-general-lifting}-\eqref{H-NA-general-lifting}.

\end{proof}
Clearly, the second term in \eqref{M-NA-general-lifting} vanishes if $\sigma$ is the canonical lifting.

\subsubsection{Invariance of the non-Archimedean functionals}
Suppose that there are two different liftings $\sigma_1,\sigma_2$ of the $T$-action on $-K_X$ so that
\begin{align}\label{sigma2-sigma1}
\sigma_2=\sigma_1+\chi
\end{align}
for some $T$-character $\chi$. Then the corresponding moment maps
$$\mathbf{m}_2=\mathbf{m}_1+\chi.$$
Clearly, the corresponding moment polytopes $\Delta_2=\Delta_1+\chi$.

Consider the equation
\begin{align}\label{g-krs-eq-1}
g_{\omega_\phi}^{(1)}\omega_\phi^n=n!e^{-\phi}\omega_0^n,
\end{align}
where $g_{\omega_\phi}^{(i)}:=g^{(i)}\circ\mathbf {m}_1$ with $g^{(i)}:\Delta_1\to\mathbb R$. The equation \eqref{g-krs-eq-1} can be changed into
\begin{align}\label{g-krs-eq-2}
g_{\omega_\phi}^{(2)}\omega_\phi^n=n!e^{-\phi}\omega_0^n,
\end{align}
where $g_{\omega_\phi}^{(2)}:=g^{(2)}\circ\mathbf {m}_1$ with
\begin{align}\label{change-of-g-def}
g^{(2)}=g^{(1)}(\mathbf{m}_1-\chi):\Delta_2\to\mathbb R.
\end{align}
The equations \eqref{g-krs-eq-1}, \eqref{g-krs-eq-2} are associated to the weighted non-Archimedean Mabuchi functionals ${\rm M}^{\rm NA}_{g^{(1)}}(\cdot)$ and ${\rm M}^{\rm NA}_{g^{(2)}}(\cdot)$, respectively.

Suppose that $(\mathcal X,\mathcal L)$ is a $T$-equivariant normal test configuration of $(X,-K_X)$. We denote by  $(\mathcal X,\mathcal L^{\sigma_i}),~i=1,2$ for $(\mathcal X,\mathcal L)$ with the $T$-action via $\sigma_i,~i=1,2$, respectively. In the following, we will show the invariance of the non-Archimedean functionals:
\begin{prop}\label{change-of-NA-func}
Suppose that $g^{(1)}\in C^\infty(\Delta_1)$. Then
$${\rm F}^{\rm NA}_{g^{(1)}}(\mathcal X,\mathcal L^{\sigma_1})={\rm F}^{\rm NA}_{g^{(2)}}(\mathcal X,\mathcal L^{\sigma_2}),~{\rm F\in\{E,I,J,H,M,D\}}.$$
\end{prop}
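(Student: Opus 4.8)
The plan is to reduce everything to the monomial case via the linearity of the construction in Steps~2--3 of Section~2.1, and then to run a direct comparison of intersection numbers computed over the two fibre-product varieties $\mathcal X^{[\mathbf k]}$ attached to the two liftings. First I would observe that if $\sigma_2=\sigma_1+\chi$ with $\chi\in\mathfrak t^*$ a $T$-character, then the change of coordinates $y\mapsto y-\chi$ on $\mathfrak t^*$ carries $\Delta_2$ to $\Delta_1$, and by \eqref{change-of-g-def} the monomial expansion of $g^{(2)}$ in the $\Delta_2$-coordinates equals that of $g^{(1)}$ in the $\Delta_1$-coordinates after re-expanding $(y_A-\chi_A)^{k_A}$. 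Because $V_g$ and the functionals ${\rm F}^{\rm NA}_g$ are $\mathbb R$-linear in the monomial coefficients $a_{\mathbf k}$ (formula \eqref{NNA-general-g-def}) and $V_g,V_{\theta^{\mathbf k}}$ are themselves invariant under translation of the polytope, it suffices to prove the statement for $g^{(1)}=\theta^{\mathbf k}$ a single monomial — with the caveat that $g^{(2)}$ is then no longer a monomial but a polynomial in the shifted variables; handling that expansion cleanly is one bookkeeping point to get right, and the cleanest route is to prove the monomial-to-monomial invariance ${\rm F}^{\rm NA}_{\theta^{\mathbf k}}(\mathcal X,\mathcal L^{\sigma_1})={\rm F}^{\rm NA}_{(y-\chi)^{\mathbf k}}(\mathcal X,\mathcal L^{\sigma_2})$ and then invoke linearity.

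Next I would set up the geometric comparison of the two fibre products. The torus $\mathbb T\cong T$ acts on $X\times(\mathbb C^{[\mathbf k]+\mathbf 1}\setminus\{O\})$ through $\iota$ composed with the lifting; changing the lifting from $\sigma_1$ to $\sigma_2=\sigma_1+\chi$ twists the $\mathbb T$-linearization of $L$ by the character $\chi$, which on the quotient $X^{[\mathbf k]}=(X\times(\mathbb C^{[\mathbf k]+\mathbf 1}\setminus\{O\}))/\mathbb T$ amounts to tensoring $L^{[\mathbf k]}$ by a line bundle pulled back from $\mathbb P^{[\mathbf k]}$, namely $\bigotimes_A{\rm pr}_A^*\mathcal O_{\mathbb P^{k_A}}(-\chi_A)$ (or $+\chi_A$, depending on sign conventions — this is the second thing to pin down). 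Concretely, $\mathcal L^{\sigma_2[\mathbf k]}=\mathcal L^{\sigma_1[\mathbf k]}+\sum_A{\rm pr}_A^*\mathcal O_{\mathbb P^{k_A}}(\chi_A)$ as $\mathbb Q$-line bundles on $\mathcal X^{[\mathbf k]}$. The heart of the argument is then a Fubini-type identity: expanding $(\mathcal L^{\sigma_1[\mathbf k]}+\sum_A c_A H_A)^{N}$ (with $H_A$ the hyperplane pullbacks, $N=n+|\mathbf k|+\varepsilon$ the relevant dimension, $c_A=\chi_A$) by the multinomial theorem, and using $H_A^{k_A+1}=0$ together with the fact that the only term of the right total $\mathbb P^{[\mathbf k]}$-degree $|\mathbf k|$ survives, one checks that the coefficient extraction reproduces exactly the intersection numbers attached to the monomial $(y-\chi)^{\mathbf k}$ against the $\sigma_2$-fibre-product. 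I would do this first for ${\rm E}^{\rm NA}_g$ (formula \eqref{E-NA-general-lifting}), since ${\rm E}$ is the representative case and all the combinatorics is visible there.

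With ${\rm E}^{\rm NA}_g$ settled, the remaining functionals follow quickly. For ${\rm J}^{\rm NA}_g$ and ${\rm I}^{\rm NA}_g$ I would use Proposition~\ref{F-NA-general-lifting}: the extra ingredient is the relative canonical term $\rho^{[\mathbf k]*}K^{\sigma[\mathbf k]}_{X\times\mathbb P^1/\mathbb P^1}$, and since $X\times\mathbb P^1$ is a product test configuration the canonical lifting on it is intrinsic, so $K^{\sigma_1[\mathbf k]}_{X\times\mathbb P^1/\mathbb P^1}$ and $K^{\sigma_2[\mathbf k]}_{X\times\mathbb P^1/\mathbb P^1}$ differ by the same hyperplane-pullback correction $\sum_A{\rm pr}_A^*\mathcal O_{\mathbb P^{k_A}}(\chi_A)$ that relates $\mathcal L^{\sigma_1[\mathbf k]}$ and $\mathcal L^{\sigma_2[\mathbf k]}$; hence $\mathcal L^{\sigma[\mathbf k]}+\rho^{[\mathbf k]*}K^{\sigma[\mathbf k]}_{X\times\mathbb P^1/\mathbb P^1}$ is lifting-independent, which forces the $\sigma$-dependence in the ${\rm J},{\rm I}$ formulas to be exactly parallel to that in ${\rm E}$, and invariance propagates. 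For ${\rm H}^{\rm NA}_g$ I would note that $K^{\log[\mathbf k]}_{\mathcal X/\mathbb P^1}$ in \eqref{K_X/P-log-k-Weil-div} is built from the canonical-lifting weight $\tilde\lambda_0$ and is therefore the same divisor regardless of $\sigma$; combining with the $\sigma$-invariance of $(\mathcal L^{\sigma[\mathbf k]}-\rho^{[\mathbf k]*}K^{\sigma[\mathbf k]}_{X\times\mathbb P^1/\mathbb P^1})$, together with the monomial-shift identity applied to the pure-$\mathcal L$ factors, gives the claim; then ${\rm M}^{\rm NA}_g={\rm H}^{\rm NA}_g-{\rm I}^{\rm NA}_g+{\rm J}^{\rm NA}_g$ and ${\rm D}^{\rm NA}_g={\rm L}^{\rm NA}-{\rm E}^{\rm NA}_g$ are immediate, since ${\rm L}^{\rm NA}$ does not involve $g$ or the lifting. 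Finally I would pass from polynomial $g$ to general $C^0$ $g$ by the uniform-approximation definition in Step~3, noting that the translation $g^{(1)}\mapsto g^{(2)}$ is a homeomorphism $C^0(\Delta_1)\to C^0(\Delta_2)$ preserving uniform convergence, so invariance is preserved in the limit. The main obstacle I anticipate is purely notational: tracking the sign of $\chi$ through the linearization twist and through the passage between the weight $\tilde\lambda_0$ of $\tilde s$ and the moment-map normalization, so that the multinomial expansion matches $(y-\chi)^{\mathbf k}$ rather than $(y+\chi)^{\mathbf k}$; once that sign is fixed consistently, every step is a routine intersection-theoretic computation.
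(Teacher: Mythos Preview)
Your proposal is correct and follows essentially the same approach as the paper: reduction to monomial $g^{(1)}$, the line-bundle identity $\mathcal L^{\sigma_2[\mathbf k]}=\mathcal L^{\sigma_1[\mathbf k]}+\sum_A{\rm pr}_A^*\mathcal O_{\mathbb P^{k_A}}(\chi_A)$ (this is \eqref{change-of-L}), multinomial expansion matched against the expansion \eqref{change-of-g} of $(y-\chi)^{\mathbf k}$, and for ${\rm J},{\rm I}$ the $\sigma$-invariance of $\mathcal L^{\sigma[\mathbf k]}+\rho^{[\mathbf k]*}K^{\sigma[\mathbf k]}_{X\times\mathbb P^1/\mathbb P^1}$ (note the sign: it is $+$, not $-$ as you wrote). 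The only difference is that the paper treats ${\rm M}$ directly from \eqref{M-NA-general-lifting}, which forces it to track an auxiliary correction ${\rm E}^{\rm NA}_{\chi(\nabla(\lambda-\chi)^{\mathbf k})}$ coming from the $(n+|\mathbf k|){\rm E}^{\rm NA}_g$ term, whereas your route through ${\rm H}$ first and then ${\rm M}={\rm H}-{\rm I}+{\rm J}$ avoids that bookkeeping and is slightly cleaner.
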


\begin{proof}
As in the previsous section, it suffices to prove the Proposition when $g^{(1)}$ is given by \eqref{monom-g}. For convenience, we may choose $\sigma_1$ to be the canonical lifting.

We first show the Proposition for $\rm {F=E}$. Denote by $\mathbf{k}=(k_1,...,k_r)$. For convenience, for another $\mathbf{i}=(i_1,...,i_r)$, we write $\mathbf i\leq\mathbf k$ if each $i_A\leq k_A$. Also, set $\mathbf 0=(0,...,0)$. By \eqref{sigma2-sigma1},
\begin{align}\label{change-of-L}
(\mathcal L^{\sigma_1})^{[\mathbf k]}=(\mathcal L^{\sigma_2})^{[\mathbf k]}-\sum_{A=1}^r{\rm pr}_A^*\mathcal O_{\mathbb P^{k_A}}(\chi_A),
\end{align}
where ${\rm pr}_A:\mathcal X^{[\mathbf k]}\to\mathbb P^{k_A}$ is the projection to the $A$-th factor in $\mathbb P^{[\mathbf k]}$. Thus
\begin{align*}
((\mathcal L^{\sigma_1})^{[\mathbf k]})^{n+|\mathbf k|+1}=&((\mathcal L^{\sigma_2})^{[\mathbf k]}-\sum_{A=1}^r{\rm pr}_A^*\mathcal O_{\mathbb P^{k_A}}(\chi_A))^{[\mathbf k]})\notag\\
=&\sum_{\mathbf 0\leq\mathbf i\leq\mathbf k}\frac{(n+|\mathbf k|+1)!}{(n+|\mathbf k|+1-|\mathbf i|)!\mathbf i!}(-1)^{|\mathbf i|}(\mathcal L^{\sigma_2[\mathbf k]})^{n+|\mathbf k|+1-|\mathbf i|}\times\notag\\
&\times\prod_{A=1}^r({\rm pr}_A^*\mathcal O_{\mathbb P^{k_A}}(\chi_A))^{i_A}.
\end{align*}
Note that
\begin{align}\label{intersection-L-O(1)}
(\mathcal L^{\sigma_2[\mathbf k]})^{n+|\mathbf k|+1-|\mathbf i|}\prod_{A=1}^r({\rm pr}_A^*\mathcal O_{\mathbb P^{k_A}}(1))^{i_A}=(\mathcal L^{\sigma_2[\mathbf k-\mathbf i]})^{n+|\mathbf k|+1-|\mathbf i|}.
\end{align}
We get
\begin{align*}
{\mathbf{k}!}((\mathcal L^{\sigma_1})^{[\mathbf k]})^{n+|\mathbf k|+1}=&\sum_{\mathbf0\leq\mathbf i\leq\mathbf k}\frac{(n+|\mathbf k|+1)!}{(n+|\mathbf {k-i}|+1)!}C_{\mathbf k!}^{\mathbf i!}(-1)^{|\mathbf i|}\chi^{\mathbf i}{(\mathbf{k}-\mathbf{i})!}(\mathcal L^{\sigma_2[\mathbf {k-i}]})^{n+|\mathbf {k-i}|+1}.
\end{align*}
Here we write $C_{\mathbf k!}^{\mathbf i!}=\prod_{A=1}^rC_{k_A}^{i_A}$ for short.

On the other hand,
\begin{align}\label{change-of-g}
g^{(2)}(\lambda)=g^{(1)}(\lambda-\chi)=&\prod_{A=1}^r(\lambda_A-\chi_A)^{k_A}=\sum_{\mathbf0\leq\mathbf i\leq\mathbf k}C_{\mathbf k!}^{\mathbf i!}(-1)^{|\mathbf i|}\prod_{A=1}^r\chi^{\mathbf i}\lambda^{\mathbf{k-i}}.
\end{align}
Combining with \eqref{E-g-mono}, we see that
\begin{align*}
{\rm E}^{\rm NA}_{g^{(2)}}(\mathcal X,\mathcal L^{\sigma_2})={\rm E}^{\rm NA}_{g^{(1)}}(\mathcal X,\mathcal L^{\sigma_1}).
\end{align*}
The case $\rm{F=D}$ can be checked by using the above relation and \eqref{D-g-mono}.

For the case $\rm{F=J}$, recall the fact
\begin{align}\label{change-of-rhoL}
\rho^{[\mathbf k]*}K_{X\times\mathbb P^1/\mathbb P^1}^{\sigma_1[\mathbf{k}]}=\rho^{[\mathbf k]*}K_{X\times\mathbb P^1/\mathbb P^1}^{\sigma_2[\mathbf{k}]}+\sum_{A=1}^r{\rm pr}_A^*\mathcal O_{\mathbb P^{k_A}}(\chi_A).
\end{align}
Combining with \eqref{change-of-L},
$$(\mathcal L^{\sigma_1[\mathbf k]}+\rho^{[\mathbf k]*}K_{X\times\mathbb P^1/\mathbb P^1}^{\sigma_1[\mathbf k]})=(\mathcal L^{\sigma_2[\mathbf k]}+\rho^{[\mathbf k]*}K_{X\times\mathbb P^1/\mathbb P^1}^{\sigma_2[\mathbf k]}).$$
By \eqref{J-NA-general-mono} we proved the Proposition for  $\rm{F=J}$. The case $\rm{F=I}$ can be showed in a same way.

Finally we consider the case $\rm{F=M}$. Recall that we assume $\sigma_1$ is the canonical lifting. As in the case $\rm{F=E}$, by \eqref{change-of-L}-\eqref{intersection-L-O(1)}, we can show that

\begin{align}\label{change-of-first-term}
&{\mathbf{k}!}(\mathcal L^{\sigma_1})^{[\mathbf{k}]})^{n+|\mathbf k|}(K^{\log}_{\mathcal X/\mathbb P^1})^{[\mathbf{k}]}\notag\\
=&((\mathcal L^{\sigma_2})^{[\mathbf k]}-\sum_{A=1}^r{\rm pr}_A^*\mathcal O_{\mathbb P^{k_A}}(\chi_A))^{n+|\mathbf{k}|}\cdot (K^{\log}_{\mathcal X/\mathbb P^1})^{[\mathbf{k}]}\notag\\
=&\sum_{\mathbf0\leq\mathbf{i}\leq\mathbf k}\frac{(n+|\mathbf k|)!}{(n+|\mathbf{k-i}|)!}C_{\mathbf k!}^{\mathbf i!}(-\chi)^{\mathbf i}(\mathbf{k}-\mathbf{i})!(K^{\log}_{\mathcal X/\mathbb P^1})^{\mathbf{k}-\mathbf{i}}((\mathcal L^{\sigma_2})^{\mathbf{k}-\mathbf{i}})^{\cdot(n+|\mathbf{k}-\mathbf{i}|)}.
\end{align}

Similarly,
\begin{align}\label{change-of-second-term}
&|\mathbf k|{\rm E}^{\rm NA}_{g^{(1)}}(\mathcal X,\mathcal L^{\sigma_1})\notag\\
=&\sum_{\mathbf0\leq\mathbf{i}\leq\mathbf k}(-\chi)^{\mathbf i}|\mathbf k-\mathbf i|C_{\mathbf k}^{\mathbf i}{\rm E}^{\rm NA}_{\lambda^{\mathbf k-\mathbf i}}(\mathcal X,\mathcal L^{\sigma_2})+\sum_{\mathbf0\leq\mathbf{i}\leq\mathbf k}(-\chi)^{\mathbf i}|\mathbf i|C_{\mathbf k}^{\mathbf i}{\rm E}^{\rm NA}_{\lambda^{\mathbf k-\mathbf i}}(\mathcal X,\mathcal L^{\sigma_2})\notag\\
=&\sum_{\mathbf0\leq\mathbf{i}\leq\mathbf k}(-\chi)^{\mathbf i}|\mathbf k-\mathbf i|C_{\mathbf k}^{\mathbf i}{\rm E}^{\rm NA}_{\lambda^{\mathbf k-\mathbf i}}(\mathcal X,\mathcal L^{\sigma_2})-{\rm E}^{\rm NA}_{\chi(\nabla(\lambda-\chi)^{\mathbf k})}(\mathcal X,\mathcal L^{\sigma_2}),
\end{align}
where $$\chi(\nabla(\lambda-\chi)^{\mathbf k}=\sum_{A=1}^r\chi_A\frac\partial{\partial\lambda_A}(\lambda-\chi)^{\mathbf k}.$$

On the other hand, by \eqref{change-of-rhoL},
\begin{align*}
&\frac{\mathbf j!}{(n+|\mathbf j|)!}(\rho^{[\mathbf j]*}K_{X\times\mathbb P^1/\mathbb P^1}^{\sigma_2[\mathbf{j}]}-\rho^{[\mathbf j]*}K_{X\times\mathbb P^1/\mathbb P^1}^{\sigma_1[\mathbf{j}]})(\mathcal L^{\sigma_2[\mathbf j]})^{n+|\mathbf j|}\\
=&-\frac{\mathbf j!}{(n+|\mathbf j|)!}(\sum_{A=1}^r{\rm pr}_A^*\mathcal O_{\mathbb P^{k_A}}(\chi_A))(\mathcal L^{\sigma_1[\mathbf j]})^{n+|\mathbf j|}.\\
=&-\frac1{(n+|\mathbf j|)!}\sum_{A=1}^r\chi_A\frac{\mathbf j!}{\mathbf j'_A!}{\mathbf j_A'!}(\mathcal L^{\sigma_2[\mathbf j_A']})^{n+|\mathbf j'_A|+1}.\\
=&-\sum_{A=1}^r\chi_Aj_A{\rm E}_{\lambda^{\mathbf j_A'}}^{\rm NA}(\mathcal X,\mathcal L^{\sigma_2})=-{\rm E}_{\sum_{A=1}^r\chi_A\frac{\partial}{\partial \lambda_A}\lambda^{\mathbf j}}^{\rm NA}(\mathcal X,\mathcal L^{\sigma_2}),
\end{align*}
where used the fact $|\mathbf j|=|\mathbf j_A'|+1$. Thus,
\begin{align*}
&\sum_{\mathbf j\leq\mathbf k}\frac{C_{\mathbf k}^{\mathbf j}(-\chi)^{\mathbf j}{\mathbf j!}}{(n+|\mathbf j|)!}(\rho^{[\mathbf j]*}K_{X\times\mathbb P^1/\mathbb P^1}^{\sigma_2[\mathbf{j}]}-\rho^{[\mathbf j]*}K_{X\times\mathbb P^1/\mathbb P^1}^{\sigma_1[\mathbf{j}]})(\mathcal L^{\sigma_2[\mathbf j]})^{n+|\mathbf j|}
=-{\rm E}^{\rm NA}_{\chi(\nabla(\lambda-\chi)^{\mathbf k})}(\mathcal X,\mathcal L^{\sigma_2}).
\end{align*}
Plugging this equation into \eqref{change-of-second-term}, we have
\begin{align*}
&|\mathbf k|{\rm E}^{\rm NA}_{g^{(1)}}(\mathcal X,\mathcal L^{\sigma_1})\notag\\
=&\sum_{\mathbf 0\leq\mathbf j\leq\mathbf k}C_{\mathbf k}^{\mathbf j}(-\chi)^{\mathbf j}\frac{\mathbf j!}{(n+|\mathbf j|)!}(\rho^{[\mathbf j]*}K_{X\times\mathbb P^1/\mathbb P^1}^{\sigma_2[\mathbf{j}]}-\rho^{[\mathbf j]*}K_{X\times\mathbb P^1/\mathbb P^1}^{\sigma_1[\mathbf{j}]})(\mathcal L^{\sigma_2[\mathbf j]})^{n+|\mathbf j|}\notag\\
&+\sum_{\mathbf0\leq\mathbf{i}\leq\mathbf k}(-\chi)^{\mathbf i}|\mathbf k-\mathbf i|C_{\mathbf k}^{\mathbf i}{\rm E}^{\rm NA}_{\lambda^{\mathbf k-\mathbf i}}(\mathcal X,\mathcal L^{\sigma_2}).
\end{align*}
Combining with \eqref{change-of-first-term} and \eqref{change-of-g} we get the Proposition for $\rm{F=M}$.

\end{proof}

\subsection{The $g$-modified Futaki invariant}
There is also a geometric way to construct test configuration of a $\mathbb Q$-Fano variety $X$ (cf. \cite[Section 2.2]{Han-Li} and \cite[Section 1]{WZZ}). Suppose that there is a Kodaira embedding of $X$ by $|-{n_0}K_X|$ for some $n_0\in\mathbb N_+$,
$$i:X\to\mathbb P({\rm H}^0(X,-{n_0}K_X))=:\mathbb P^{N-1}.$$
Choose a vector $\Lambda\in\mathfrak{psl}_N(\mathbb C)$ so that $\Lambda$ generates a rank $1$ torus of ${\rm PSL}_N(\mathbb C)$. Then it defines a test configuration $(\mathcal X,\mathcal L)$ via
$$\mathcal X_{t}:=\exp(z\Lambda)\cdot i(X),~t=e^z\in\mathbb C^*$$
and
$$\mathcal L|_{\mathcal X_t}:=\mathcal O_{\mathbb P^{N-1}}(1)|_{\mathcal X_t}.$$
Also define $\mathcal X_0:=\lim_{t\to0}\mathcal X_t$ as the limit of algebraic cycle, and $\mathcal L_0:=\mathcal L_{\mathcal X_0}$. Indeed, any test configuration can be realized in this way.

Without loss of generality we assume that $n_0=1$. Otherwise, we replace $-K_X$ by $-n_0K_X$. Suppose that $\xi$ is a (real) holomorphic vector field on $X$ so that $T=\overline{\exp(t\xi)}$. We also assume that $\Lambda$ commutes with $\xi$ so that the test configuration $(\mathcal X,\mathcal L)$ is $T$-equivariant. We can also lift $\xi$ to an element of $\mathfrak{psl}_N(\mathbb C)$. Choose a basis $\{e_p\}_{p=1}^{{\rm h}^0(X,-K_X)}$ of ${\rm H}^0(\mathcal X_0,-\mathcal L_0)$ so that each $e_p$ is a common eigenvector of both the $\exp(t\xi)$- and $\exp(t\Lambda)$-actions. Denote by $\{e^{\xi_p^k}\}_{p=1}^{{\rm h}^0(X,-kK_X)}$ and $\{e^{\Lambda_p^k}\}_{p=1}^{{\rm h}^0(X,-kK_X)}$ the eigenvalues of the canonical lifting of the $\exp(t\xi)$- and $\exp(t\Lambda)$-actions on ${\rm H}^0(\mathcal X_0,-k\mathcal L_0)$, respectively. Here we use the fact that ${\rm h}^0(X,-kK_X)={\rm h}^0(\mathcal X_0,-k\mathcal L_0)$. Fix a background K\"ahler metric $\omega_0\in2\pi c_1(X)$ and denote by $\theta_\xi$ a potential of $\xi$ with respect to $\omega_0$. Suppose that $T$ has generators $\{\xi_A\}_{A=1}^r$. As in \cite[Section 1]{WZZ}, for a $C^1$-function $g$ defined on some interval and $g_{\omega_0}=g(\theta_{\xi_1},...,\theta_{\xi_r})$, define
\begin{align}
S^{(g)}_{1|k}(\mathcal X,\mathcal L):=&\sum_{p=1}^{{\rm h}^0(X,-kK_X)}g(\frac{\xi_{1|p}^k}k,...,\frac{\xi_{r|p}^k}k)\Lambda_p^k,\label{S-1k-def}\\
S^{(g)}_{2|k}(\mathcal X,\mathcal L):=&\frac12\sum_{p=1}^{{\rm h}^0(X,-kK_X)}\sum_{A=1}^r\frac{\partial g}{\partial \theta_{\xi_A}}(\frac{\xi_{1|p}^k}k,...,\frac{\xi_{r|p}^k}k)\frac{\xi_{A|p}^k}k \frac{\Lambda_p^k}k.\label{S-2k-def}
\end{align}
Once it holds the formal asymptotic expression,
\begin{align}\label{S1-S2}
\frac{S^{(g)}_{2|k}-S^{(g)}_{1|k}}{k{\rm h}^0(X,-kK_X)}(\mathcal X,\mathcal L)=:F_0(\mathcal X,\mathcal L)+F_1(\mathcal X,\mathcal L)k^{-1}+O(k^{-2}),~k\to+\infty,
\end{align}
we can define the $g$-modified Futaki invariant in a similar way of \cite[Section 1]{WZZ},
\begin{defi}\label{g-mod-Fu-def}
Suppose that \eqref{S1-S2} holds. Then the $g$-modified Futaki invariant of $(\mathcal X,\mathcal L)$ is defined as
$${\rm Fut}_g(\mathcal X,\mathcal L):=F_1(\mathcal X,\mathcal L).$$
\end{defi}

It was showed by \cite[Section 1]{WZZ} that \eqref{S1-S2} holds for $g=e^{\theta_\xi}$ on a Fano manifold. That is, the modified Futaki invariant in  Definition \ref{g-mod-Fu-def} for K\"ahler-Ricci solitons is well-defined. At the beginning of Section 5, we will show that \eqref{S1-S2} holds for an equivariant test configuration of a polarized spherical variety with $g\in C^\infty(\Delta)$. Hence the  $g$-modified Futaki invariant is defined. Indeed, we can show the $g$-modified Futaki invariant of an equivariant test configuration can be expressed as intersection numbers of some line bundle (see Lemma \ref{FNA-prop} below).

\subsubsection{Change of lifting}
As in Section 2.1.2, we consider the expression of the Futaki invariant under a general lifting of the $T$-action on $-K_X$. Denote by $\sigma_0$ the canonical lifting and $\sigma$ so that there is a $T$-character $\chi$ such that
\begin{align}\label{sigma-sigma0}
\sigma=\sigma_0+\chi
\end{align}
Then the corresponding weights
$$\xi_{A|p}^k(\sigma)=\xi_{A|p}^k(\sigma_0)+\chi(\xi_{A|p}^k),~\forall p\in\mathbb N_+~\text{and}~k=1,...,{\rm h}^0(X,-kK_X).$$
Also denote by $g^{(\sigma)}(\cdot),~g^{(\sigma_0)}(\cdot)$ the weight functions on each moment polytope, respectively. Then define
\begin{align}
S^{(g^{(\sigma)})}_{1|k}(\mathcal X,\mathcal L):=&\sum_{p=1}^{{\rm h}^0(X,-kK_X)}g^{(\sigma)}(\frac{\xi_{1|p}^k(\sigma)}k,...,\frac{\xi_{r|p}^k(\sigma)}k)\Lambda_p^k,\label{S-1k-def-change}\\
S^{(g^{(\sigma)})}_{2|k}(\mathcal X,\mathcal L):=&\frac12\sum_{p=1}^{{\rm h}^0(X,-kK_X)}\sum_{A=1}^r\frac{\partial g^{(\sigma)}}{\partial \theta_{\xi_A}}(\frac{\xi_{1|p}^k(\sigma)}k,...,\frac{\xi_{r|p}^k(\sigma)}k)\frac{\xi_{A|p}^k(\sigma_0)}k \frac{\Lambda_p^k}k,\label{S-2k-def-change}
\end{align}
and the $g$-weighted Futaki invariant is defined as Definition \ref{g-mod-Fu-def}. In can be checked that ${\rm Fut}_{g^{(\sigma)}}(\cdot)$ derived from  \eqref{S-1k-def-change}-\eqref{S-2k-def-change} coincides with that of  ${\rm Fut}_{g^{(\sigma_0)}}(\cdot)$ derived from \eqref{S-1k-def}-\eqref{S-2k-def}.

\subsubsection{The case of polynomial $g$}

Let $X$ be a $\mathbb Q$-Fano variety. Without loss of generality we can assume that $L=-K_X$ is very ample so that $X$ is embedded in a projective space by $|-K_X|$. Suppose that $T\subset{\rm Aut}(X)$ is a complex torus of rank $r$, with $\{\xi_A\}_{A=1}^r$ the generators. Also suppose that $(\mathcal X,\mathcal L)$ is a test configuration of $(X,-K_X)$ constructed at the beginning of Section 2.2, with $\Lambda$ the generator of the corresponding $\mathbb C^*$-action. In the following we will show \eqref{S1-S2} holds for polynomial $g$. We will adopt the argument of \cite[Section 3]{Boucksom-Hisamoto-Jonsson}.


\begin{thm}\label{Fut-g-smooth-thm}
Let $X$ be a $\mathbb Q$-Fano variety and $T\subset{\rm Aut}(X)$ acts on $-K_X$ through some fixed lifting $\sigma$. Suppose that $g$ is a polynomial function on the moment polytope $\Delta$ of the lifting $\sigma$. Then for any $T$-equivariant normal test configuration $(\mathcal X,\mathcal L)$ of $(X,-K_X)$, \eqref{S1-S2} holds. Moreover, the $g$-modified Futaki invariant of  $(\mathcal X,\mathcal L)$ is given as following:
\begin{itemize}
\item[(1)] When $g$ is a monomial given by \eqref{monom-g},
 \begin{align}\label{Fut-g-monomial}
{\rm Fut}_g(\mathcal X,\mathcal L)=&\frac{\mathbf{k}!}{(n+|\mathbf k|)!V}(K_{\mathcal X/\mathbb P^1}^{[\mathbf k]})(\mathcal L^{\sigma[\mathbf k]})^{n+|\mathbf k|}+\frac{\mathbf{k}!}{(n+|\mathbf k|)!V}(\rho^{[\mathbf k]*}K^{\sigma[\mathbf k]}_{X\times\mathbb P^1/\mathbb P^1}\notag\\
&-\rho^{[\mathbf k]*}K^{[\mathbf k]}_{X\times\mathbb P^1/\mathbb P^1})(\mathcal L^{\sigma[\mathbf k]})^{n+|\mathbf k|}+\frac{V_g}V (n+|\mathbf k|){\rm E}_g^{\rm NA}(\mathcal X,\mathcal L^\sigma);
\end{align}
\item[(2)] When $g$ is a monomial given by \eqref{g-polynomial},
\begin{align}\label{Fut-general-g-def}
{\rm Fut}_g(\mathcal X,\mathcal L):=&\sum_{\mathbf{k}}a_{\mathbf{k}}{\rm Fut}_{\lambda^{\mathbf{k}}}(\mathcal X,\mathcal L).
\end{align}
\end{itemize}
\end{thm}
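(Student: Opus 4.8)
The plan is to imitate the equivariant Riemann--Roch argument of \cite[Section 3]{Boucksom-Hisamoto-Jonsson}, carried out not on $\mathcal X$ itself but on the fibre-product spaces $(\mathcal X^{[\mathbf k]},\mathcal L^{\sigma[\mathbf k]})$ of Section 2.1. Since the quantities $S^{(g)}_{1|k}$ and $S^{(g)}_{2|k}$ of \eqref{S-1k-def}--\eqref{S-2k-def} are both $\mathbb R$-linear in $g$ (the second through $\nabla g$), it suffices to prove \eqref{S1-S2} and the asserted formula when $g=\theta^{\mathbf k}$ is the monomial \eqref{monom-g}; statement (2) and the identity \eqref{Fut-general-g-def} are then immediate. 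For such a $g$ one has the Euler identity $\sum_A\theta_A\,\partial_{\theta_A}g=|\mathbf k|\,g$, which applied pointwise at $\big(\tfrac{\xi^k_{1|p}}{k},\dots,\tfrac{\xi^k_{r|p}}{k}\big)$ yields the \emph{exact} relation $S^{(g)}_{2|k}(\mathcal X,\mathcal L)=\tfrac{|\mathbf k|}{2k}\,S^{(g)}_{1|k}(\mathcal X,\mathcal L)$, so that
\[
\frac{S^{(g)}_{2|k}-S^{(g)}_{1|k}}{k\,{\rm h}^0(X,-kK_X)}=-\Big(1-\frac{|\mathbf k|}{2k}\Big)\,\frac{S^{(g)}_{1|k}}{k\,{\rm h}^0(X,-kK_X)}.
\]
Thus everything reduces to (i) showing that $k\mapsto S^{(g)}_{1|k}(\mathcal X,\mathcal L)$ is, for $k$ large and divisible, a polynomial with computable first two coefficients $s_0,s_1$, and (ii) the expansion ${\rm h}^0(X,-kK_X)=\tfrac{V}{n!}k^n+\tfrac{V}{2(n-1)!}k^{n-1}+O(k^{n-2})$, which holds since $X$ is klt (pass to a resolution, use the projection formula and Kawamata--Viehweg vanishing).

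For (i), I would realise $S^{(g)}_{1|k}$ on the fibre products. Decompose $H^0(\mathcal X_0,-k\mathcal L_0)=\bigoplus_{\nu,\mu}V^k_{\nu,\mu}$ into joint eigenspaces of the $\sigma$-lifted $T$-action (eigenvalue $\nu$ running over lattice points of $k\Delta$) and of the test-configuration $\mathbb C^*$-action generated by $\Lambda$ (eigenvalue $\mu$); then $k^{|\mathbf k|}S^{(g)}_{1|k}=\sum_{\nu,\mu}(\dim V^k_{\nu,\mu})\,\mu\,\nu^{\mathbf k}$. By the construction of $(\mathcal X^{[\mathbf j]},\mathcal L^{\sigma[\mathbf j]})$ recalled in \cite[pp.\,7--8]{Han-Li-KRS}, for each $\mathbf 0\le\mathbf j\le\mathbf k$ one has
\[
H^0\!\big(\mathcal X^{[\mathbf j]}_0,k\mathcal L^{\sigma[\mathbf j]}_0\big)=\bigoplus_{\nu,\mu}V^k_{\nu,\mu}\otimes\bigotimes_{A}H^0\!\big(\mathbb P^{j_A},\mathcal O_{\mathbb P^{j_A}}(\nu_A)\big),
\]
with the induced $\mathbb C^*$-action acting by $\mu$ on the $(\nu,\mu)$-summand, so its total weight equals $w^{[\mathbf j]}_k:=\sum_{\nu,\mu}(\dim V^k_{\nu,\mu})\,\mu\,\prod_A\binom{\nu_A+j_A}{j_A}$. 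Since $\nu^{\mathbf k}$ lies in the span of the basis $\big\{\prod_A\binom{\nu_A+j_A}{j_A}\big\}_{\mathbf 0\le\mathbf j\le\mathbf k}$ with leading ($\mathbf j=\mathbf k$) coefficient $\mathbf k!$, this gives $k^{|\mathbf k|}S^{(g)}_{1|k}=\sum_{\mathbf 0\le\mathbf j\le\mathbf k}c_{\mathbf j}\,w^{[\mathbf j]}_k$ with explicit integers $c_{\mathbf j}$, $c_{\mathbf k}=\mathbf k!$, reducing (i) to the un-weighted Riemann--Roch asymptotics of the $w^{[\mathbf j]}_k$.

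Each $(\mathcal X^{[\mathbf j]},\mathcal L^{\sigma[\mathbf j]})$ is a normal test configuration over $\mathbb P^1$ of relative dimension $n+|\mathbf j|$ with $\mathcal L^{\sigma[\mathbf j]}$ relatively ample, so \cite[Section 3]{Boucksom-Hisamoto-Jonsson} (which already covers normal total spaces, via a resolution and the projection formula) gives a two-term expansion $w^{[\mathbf j]}_k=\alpha_{\mathbf j}\,k^{n+|\mathbf j|+1}+\beta_{\mathbf j}\,k^{n+|\mathbf j|}+O(k^{n+|\mathbf j|-1})$, where $\alpha_{\mathbf j}$ is a multiple of $(\mathcal L^{\sigma[\mathbf j]})^{n+|\mathbf j|+1}$ and $\beta_{\mathbf j}$ a combination of $(\mathcal L^{\sigma[\mathbf j]})^{n+|\mathbf j|}\cdot K_{\mathcal X^{[\mathbf j]}/\mathbb P^1}$ and $(\mathcal L^{\sigma[\mathbf j]})^{n+|\mathbf j|+1}$, with $K_{\mathcal X^{[\mathbf j]}/\mathbb P^1}$ the Weil-divisorial relative canonical (no reduced-fibre correction, matching the Futaki normalisation in \eqref{Fut-g-monomial}). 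Substituting into the above identity, only $\mathbf j=\mathbf k$ contributes to $s_0$, and only $\mathbf j=\mathbf k$ together with those $\mathbf j$ with $|\mathbf j|=|\mathbf k|-1$ to $s_1$. Using $K_{\mathbb P^{[\mathbf j]}}=-\sum_A(j_A+1)\,{\rm pr}_A^*\mathcal O_{\mathbb P^{j_A}}(1)$ and the $T$-weight $\tilde\lambda_0$ of the canonical section $\tilde s$ of $K_{\mathcal X}$, I would split $K_{\mathcal X^{[\mathbf j]}/\mathbb P^1}$ into the Weil divisor built from $\overline{\tilde{\mathfrak d}_0}$ and $\sum_A{\rm pr}_A^*\mathcal O_{\mathbb P^{j_A}}(\tilde\lambda_{0A})$ exactly as in \eqref{K_X/P-log-k-Weil-div} (minus the $\mathcal X_{0,{\rm red}}-\tilde\pi^*\{\infty\}$ part) plus an ${\rm pr}_A^*\mathcal O(1)$-correction; the corrections cancel against the leading terms of the $|\mathbf j|=|\mathbf k|-1$ summands, just as in the proof of Proposition \ref{F-NA-general-lifting}, while the leftover $(\mathcal L^{\sigma[\mathbf j]})^{n+|\mathbf j|+1}$-contributions together with $h_1$ and the Euler term $\tfrac{|\mathbf k|}{2}s_0$ reorganise --- via the Euler-type identity recorded after \emph{Step-1} of Section 2.1.1 --- into the summand $\tfrac{V_g}{V}(n+|\mathbf k|){\rm E}_g^{\rm NA}(\mathcal X,\mathcal L^\sigma)$. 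The discrepancy $\rho^{[\mathbf k]*}K^{\sigma[\mathbf k]}_{X\times\mathbb P^1/\mathbb P^1}-\rho^{[\mathbf k]*}K^{[\mathbf k]}_{X\times\mathbb P^1/\mathbb P^1}$ appears because $\overline{\tilde{\mathfrak d}_0}^{[\mathbf j]}$ is referenced to the canonical lifting (the weight $\tilde\lambda_0$) whereas $\mathcal L^{\sigma[\mathbf j]}$ carries the lifting $\sigma$, precisely as in Propositions \ref{F-NA-general-lifting} and \ref{change-of-NA-func}. Extracting the coefficient of $k^{-1}$ in the first display then gives ${\rm Fut}_g(\mathcal X,\mathcal L)=F_1$ in the form \eqref{Fut-g-monomial}; part (2) follows by linearity.

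The main obstacle, I expect, is twofold. First, the fibre-product dictionary must be made exact down to subleading order: the expansion of $\nu^{\mathbf k}$ in the binomial basis has to be carried out carefully enough that the $|\mathbf j|=|\mathbf k|-1$ contributions are fully accounted for, and one must check that these genuinely reassemble, together with the $K_{\mathbb P^{[\mathbf j]}}$-part of the relative canonical, into the divisor \eqref{K_X/P-log-k-Weil-div}, with correct bookkeeping of the chosen lifting. Second, and more delicate, is the Riemann--Roch input on the normal but possibly singular $(n+|\mathbf j|+1)$-fold $\mathcal X^{[\mathbf j]}$: one needs that the $k^{n+|\mathbf j|}$-coefficient of $w^{[\mathbf j]}_k$ is exactly the stated intersection number, with no contribution from the singular locus or from higher Todd classes at this order, together with the eventual polynomiality of $k\mapsto w^{[\mathbf j]}_k$ and of $k\mapsto{\rm h}^0(X,-kK_X)$. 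This is supplied either by \cite[Section 3]{Boucksom-Hisamoto-Jonsson} applied to $\mathcal X^{[\mathbf j]}$ (reducing to a resolution), or equivalently by the klt Riemann--Roch analysis of \cite[Sections 5 and 10]{Han-Li-KRS} used there for $\mathcal X$.
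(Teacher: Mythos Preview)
Your outline follows the paper's approach very closely: both reduce to monomial $g$, both interpret the weighted weight-sum through the fibre products $(\mathcal X^{[\mathbf j]},\mathcal L^{[\mathbf j]})$, and both feed these into the two-term Riemann--Roch/BHJ expansion, finishing via the relation between $K_{\mathcal X^{[\mathbf k]}/\mathbb P^1}$ and $K_{\mathcal X/\mathbb P^1}^{[\mathbf k]}$ (the paper's Lemma~\ref{-K_{X^k}-lem}). Your observation that the Euler identity gives the \emph{exact} relation $S^{(g)}_{2|k}=\tfrac{|\mathbf k|}{2k}S^{(g)}_{1|k}$ for monomial $g$ is correct and slightly more transparent than the paper's parallel expansion of $S^{(g)}_{2|m}$ in \eqref{sum-s2m-pol}. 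The organisation differs only in packaging: the paper uses the Leray spectral sequence comparison $\chi(\mathbb P^1,\overline{\rm pr}_*\mathcal O_{\mathcal X^{[\mathbf k]}}(m\mathcal L^{[\mathbf k]}))=\chi(\mathcal X^{[\mathbf k]},m\mathcal L^{[\mathbf k]})$ to extract the subleading term in one stroke, whereas you expand $\nu^{\mathbf k}$ in the binomial basis and track the $|\mathbf j|=|\mathbf k|-1$ pieces by hand; the content is the same.

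There is, however, one genuine step you are missing. Your identification
\[
w^{[\mathbf j]}_k=\sum_{\nu,\mu}(\dim V^k_{\nu,\mu})\,\mu\,\prod_A\binom{\nu_A+j_A}{j_A}
\]
as the total $\mathbb C^*$-weight on $H^0(\mathcal X_0^{[\mathbf j]},k\mathcal L_0^{\sigma[\mathbf j]})$ requires $\binom{\nu_A+j_A}{j_A}=\dim H^0(\mathbb P^{j_A},\mathcal O(\nu_A))$, which holds only when every $\nu_A\ge 0$. For the canonical lifting this is generally false, and then $\mathcal L^{\sigma[\mathbf j]}$ need not even be relatively ample. The paper fixes this at the outset by passing to an auxiliary lifting $\sigma'$ whose moment polytope lies in the first quadrant (so that all $T$-weights $\nu$ are non-negative and $\mathcal L^{\sigma'[\mathbf j]}$ is ample for every $\mathbf 0\le\mathbf j\le\mathbf k$), carries out the entire Riemann--Roch computation with respect to $\sigma'$, and only at the end changes the lifting back to $\sigma_0$ using the change-of-lifting formulas (this is exactly the passage from \eqref{sum-s1m-sigma'} to \eqref{sum-s1m-pol}). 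You should insert this move before invoking the fibre-product dictionary; once that is in place, the rest of your sketch goes through and matches the paper's argument.
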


\begin{proof}
We will prove the Proposition using the argument of \cite[Section 3]{Boucksom-Hisamoto-Jonsson}. It suffices to prove it when $\sigma$ is the canonical lifting. In view of the change of lifting formula \eqref{S-1k-def-change}-\eqref{S-2k-def-change}, we may fix a lifting $\sigma'$ of the $T$-action on $-K_X$ so that the corresponding moment polytope $\Delta'$ lies in the first quadrant. In fact, we may even assume that $\mathcal L^{\sigma'[{\mathbf k}]}$ is ample. We first deal with the case when $g'$ is a monomial on $\Delta'$ given by \eqref{monom-g}. Note that if $\sigma'=\sigma_0+\mu$, then $g'(\cdot+\mu)$ is the corresponding weight on the moment polytope $\Delta=\Delta'-\mu$ of the canonical lifting $\sigma_0$.

For a normal test configuration ${\rm pr}:\mathcal X\to\mathbb P^1,$ set ${\mathbf k}:=(k_1,...,k_r)$ and consider $(\mathcal X^{[{\mathbf k}]}, \mathcal L^{[{\mathbf k}]})$. 
Denote by ${\rm pr}^{[{\mathbf k}]}:\mathcal X^{[{\mathbf k}]}\to\mathbb P^{[{\mathbf k}]}$ the projection. We also consider the variety $\mathbb P^1$ with trivial $\mathbb T$-action. Then $(\mathbb P^1)^{[\mathbf{k}]}=\mathbb P^1\times\mathbb P^{[\mathbf{k}]}$, and we have the following diagram:
\begin{align*}
\mathcal X&\times(\mathbb C^{\mathbf{k}+\mathbf{1}}{\setminus}\{O\}) \overset{/\mathbb T}\longrightarrow \mathcal X^{[\mathbf k]}\\
{{\rm pr}\times {\rm Id}}&\downarrow \textcolor[rgb]{1.00,1.00,1.00}{--------} \downarrow{\rm pr}^{[{\mathbf{k}}]}\\
\mathbb P^1&\times(\mathbb C^{\mathbf{k}+\mathbf{1}}{\setminus}\{O\})\overset{/\mathbb T}\longrightarrow \mathbb P^1\times\mathbb P^{[\mathbf k]}\overset{{\rm pr}^0}\longrightarrow\mathbb P^1
\end{align*}
The diagram commutes since the $T_0:=\exp(t\Lambda)$- and $\mathbb T$-actions commute. For any $m\in\mathbb N_+$, denote by ${\rm H}^0(\mathcal X_0,-m\mathcal L_0)_{(\eta;\lambda)}$ the sections in ${\rm H}^0(\mathcal X_0,-m\mathcal L_0)$ so that $T$ acts on it through character $\eta=(\eta_1,...,\eta_r)\in\mathfrak X(T)\cong\mathbb Z^r$ and $\exp(t\Lambda)$ acts through character $\lambda\in\mathbb Z$. 
Also we have $(m\mathcal L)^{\sigma'[{\mathbf k}]}=m\mathcal L^{\sigma'[{\mathbf k}]}$. Let $\overline{\rm pr}:={\rm pr}^0\circ{\rm pr}^{[\mathbf k]}$. Then using \cite[Proposition 1.3]{Boucksom-Hisamoto-Jonsson},
\begin{align*}
\overline{\rm pr}_*\mathcal O_{\mathcal X^{[{\mathbf k}]}}(m\mathcal L^{\sigma'[{\mathbf k}]})&=\oplus_{\lambda\in\mathbb Z}{\rm H}^0(\mathcal X_0,-m\mathcal L_0)^{(T_0)}_{\lambda}\otimes\mathcal O_{\mathbb P^1}(\lambda),
\end{align*}
recall that here $T_0$ is the $\mathbb C^*$-action of the test configuration which acts trivially on $\mathbb C^{\mathbf{k}+\mathbf{1}}{\setminus}\{O\}$. Clearly,
$${\rm H}^0(\mathcal X_0,-m\mathcal L_0)^{(T_0)}_{\lambda}=\oplus_{\eta\in\mathfrak X(T)}{\rm H}^0(\mathcal X_0,-m\mathcal L_0)^{(T\times T_0)}_{(\eta,\lambda)},$$
since the actions of $T_0$ and $T$ commutes. By Lemma \ref{dim-H0-Lk-lem} in the Appendix,
$$\dim{\rm H}^0(\mathcal X_0,-m\mathcal L_0)^{(T_0)}_{\lambda}=\sum_{\eta\in\mathfrak X(T)}\dim{\rm H}^0(\mathcal X_0,-m\mathcal L_0)^{(T\times T_0)}_{(\eta,\lambda)}C_{\mathbf k+\lambda}^{\mathbf k}.$$
Thus,
\begin{align*}
\chi(\mathbb P^1,\overline{\rm pr}_*\mathcal O_{\mathcal X^{[{\mathbf k}]}}(m\mathcal L^{\sigma'[{\mathbf k}]}))
=\sum_{(\eta;\lambda)\in\mathfrak X(T)\oplus\mathbb Z}\dim{\rm H}^0(\mathcal X,-m\mathcal L)^{(T\times T_0)}_{(\eta;\lambda)}(\lambda+1)C_{\mathbf k+\lambda}^{\mathbf k}.
\end{align*}

Hence, we get
\begin{align}\label{chi(P^k)}
&\chi(\mathbb P^1,\overline{\rm pr}_*\mathcal O_{\mathcal X^{[{\mathbf k}]}}(m\mathcal L^{\sigma'[{\mathbf k}]}))\notag\\
=&\sum_{(\eta;\lambda)\in\mathfrak X(T)\oplus\mathbb Z}(\lambda+1)\dim{\rm H}^0(\mathcal X_0,-m\mathcal L_0)^{(T\times T_0)}_{(\eta;\lambda)}\prod_{A=1}^r\left(\frac{(\eta_A+1)\cdot...\cdot(\eta_A+k_A)}{k_A!}\right).
\end{align}
Also, by the Riemann-Roch formula \cite[Theorem A.1]{Boucksom-Hisamoto-Jonsson},
\begin{align}\label{chi(X^k)}
\chi(\mathcal X^{[{\mathbf k}]},m\mathcal L^{\sigma'[{\mathbf k}]})=&\frac{(\mathcal L^{\sigma'[{\mathbf k}]})^{n+|{\mathbf k}|+1}}{(n+|{\mathbf k}|+1)!}m^{n+|{\mathbf k}|+1}-\frac{K_{\mathcal X^{[{\mathbf k}]}}\cdot(\mathcal L^{\sigma'[{\mathbf k}]})^{n+|{\mathbf k}|}}{2(n+|{\mathbf k}|)!}m^{n+|{\mathbf k}|}\notag\\
&+O(m^{n+|{\mathbf k}|-1}),~m\to+\infty.
\end{align}
On the other hand, since $L$ is ample, by the Leray spectral sequence \cite[Chapter 1, Section 4.2]{Danilov-Itogi},
\begin{align}\label{larey-eq}
\chi(\mathbb P^1,\overline{\rm pr}_*\mathcal O_{\mathcal X^{[{\mathbf k}]}}(m\mathcal L^{[{\mathbf k}]}))=\chi(\mathcal X^{[{\mathbf k}]},m\mathcal L^{[{\mathbf k}]}).
\end{align}
Combining with \eqref{chi(P^k)}-\eqref{chi(X^k)}, we get the sum
$$\sum_{(\eta;\lambda)\in\mathfrak X(T)\oplus\mathbb Z}\lambda\frac{\eta^{\mathbf k}}{\mathbf k!}\dim{\rm H}^0(\mathcal X_0,m\mathcal L_0)^{(T\times T_0)}_{(\eta;\lambda)}$$
is a polynomial function for large $m\gg1$,
\begin{align}\label{first-term-in-S1}
&\sum_{(\eta;\lambda)\in\mathfrak X(T)\oplus\mathbb Z}\lambda\frac{\eta^{\mathbf k}}{\mathbf k!}\dim{\rm H}^0(\mathcal X_0,m\mathcal L_0)^{(T\times T_0)}_{(\eta;\lambda)}\notag\\
=&
\frac{(\mathcal L^{\sigma'[{\mathbf k}]})^{\cdot(n+|{\mathbf k}|+1)}}{(n+|{\mathbf k}|+1)!}m^{n+|{\mathbf k}|+1}
+O(m^{n+|{\mathbf k}|}),~m\to+\infty.
\end{align}
Similarly, denote by ${\mathbf{k}}'_A=(k_1,...,k_A-1,...,k_r)$, it holds
\begin{align}\label{second-term}
&\sum_{(\eta;\lambda)\in\mathfrak X(T)\oplus\mathbb Z}\lambda\frac{\eta^{\mathbf k_A'}}{\mathbf k_A'!}\dim{\rm H}^0(\mathcal X_0,m\mathcal L_0)^{(T\times T_0)}_{(\eta;\lambda)}\notag\\
=&
\frac{(\mathcal L^{\sigma'[{\mathbf{k}}_A']})^{\cdot(n+|{\mathbf k}|)}}{(n+|{\mathbf k}|)!}m^{n+|{\mathbf k}|}
+O(m^{n+|{\mathbf k}|-1}),~m\to+\infty.
\end{align}
Also,
\begin{align}\label{error-term}
&\sum_{(\eta;\lambda)\in\mathfrak X(T)\oplus\mathbb Z}\frac{\eta^{\mathbf k}}{\mathbf k!}\dim{\rm H}^0(\mathcal X_0,m\mathcal L_0)^{(T\times T_0)}_{(\eta;\lambda)}\notag\\
=&
\frac{(\mathcal L_0^{\sigma'[{\mathbf k}]})^{\cdot(n+|{\mathbf k}|)}}{(n+|{\mathbf k}|)!}m^{n+|{\mathbf k}|}
+O(m^{n+|{\mathbf k}|-1}),~m\to+\infty.
\end{align}
Thus, plug \eqref{chi(P^k)}-\eqref{chi(X^k)} and \eqref{first-term-in-S1}-\eqref{error-term} into \eqref{larey-eq}, we get
\begin{align}\label{sum-s1m-pol-pre}
S^{(g')}_{1|m}(\mathcal X,\mathcal L)=&\sum_{(\eta;\lambda)\in\mathfrak X(T)\oplus\mathbb Z}\lambda g'(\frac{\eta_A}{m})\dim{\rm H}^0(\mathcal X_0,m\mathcal L_0)^{(T\times T_0)}_{(\eta;\lambda)}\notag\\
=&{{\mathbf{k}}}!\frac{(\mathcal L^{\sigma'[{\mathbf k}]})^{n+|{\mathbf k}|+1}}{(n+|{\mathbf k}|+1)!}m^{n+1}-{{\mathbf{k}}}!\frac{K_{\mathcal X^{[{\mathbf k}]}}\cdot(\mathcal L^{\sigma'[{\mathbf k}]})^{n+|{\mathbf k}|}}{2(n+|{\mathbf k}|)!}m^{n}\notag\\
&-{{\mathbf{k}}}!\sum_{A=1}^r(k_A+1)\frac{(\mathcal L^{\sigma'[{\mathbf k}'_A]})^{n+|{\mathbf k}|}}{2(n+|{\mathbf k}|)!}m^{n}-{{\mathbf{k}}}!\frac{( \mathcal L_0^{\sigma'[{\mathbf k}]})^{n+|{\mathbf k}|}}{(n+|{\mathbf k}|)!}m^{n}\notag\\
&+O(m^{n+|\bar{\mathbf k}|-2}),~m\to+\infty.
\end{align}

We want to simplify the above equation. The relation of $-K_{X^{[\mathbf k]}}$ and $(-K_X)^{[\mathbf k]}$ is derived in Lemma \ref{-K_{X^k}-lem} in the Appendix. On the other hand, by definition, a $T$-invariant divisor $D$ in ${\rm pr}_A^*\mathcal O_{\mathbb P^{k_A}}(1)$ satisfies
$$(D,\mathcal L^{\sigma'[{\mathbf k}]}|_D)\cong(\mathcal X^{[{\mathbf{k}}'_A]},\mathcal L^{\sigma'[{\mathbf{k}}'_A]}).$$
It then follows
\begin{align}\label{intersection-O(k_A+1)}
{\rm pr}_A^*\mathcal O_{\mathbb P^{k_A}}(k_A+1)\cdot(\mathcal L^{\sigma'[{\mathbf k}]})^{n+|{\mathbf k}|}=(k_A+1)(\mathcal L^{\sigma'[{\mathbf k}'_A]})^{n+|{\mathbf k}|}.
\end{align}

Also, consider the projection ${\rm pr}:\mathcal X\to\mathbb P^1$. Since $-K_{\mathbb P^1}=\mathcal O_{\mathbb P^1}(2)$, it has a divisor $-K_{\mathbb P^1}=2[0]$. 
We have
\begin{align}\label{error-simp}
-({\rm pr}^{[\mathbf{k}]})^*K_{\mathbb P^1}\cdot(\mathcal L^{\sigma'[{\mathbf k}]})^{n+|{\mathbf k}|}=2(\mathcal L_0^{\sigma'[{\mathbf k}]})^{n+|{\mathbf k}|}.
\end{align}

Plugging \eqref{intersection-O(k_A+1)}-\eqref{error-simp} into \eqref{sum-s1m-pol-pre} and using  Lemma \ref{-K_{X^k}-lem}, we get that when $g'$ is in form of \eqref{monom-g} with respect to the lifting $\sigma'$,
\begin{align}\label{sum-s1m-sigma'}
S^{(g')}_{1|m}(\mathcal X,\mathcal L)
=&{{\mathbf{k}}}!\frac{(\mathcal L^{\sigma'[{\mathbf k}]})^{n+|{\mathbf k}|+1}}{(n+|{\mathbf k}|+1)!}m^{n+1}-{{\mathbf{k}}}!\frac{K_{\mathcal X/\mathbb P^1}^{[{\mathbf k}]}\cdot(\mathcal L^{\sigma'[{\mathbf k}]})^{n+|{\mathbf k}|}}{2(n+|{\mathbf k}|)!}m^{n}
\notag\\&+O(m^{n-1}),~m\to+\infty.
\end{align}

Now suppose that $g$ is a monomial \eqref{monom-g} on the canonical polytope $\Delta$. Then it corresponds to
$$g'(\eta)=g(\eta-\mu)=\sum_{\mathbf0\leq\mathbf i\leq\mathbf k}C_{\mathbf k!}^{\mathbf i!}(-1)^{|\mathbf i|}\prod_{A=1}^r\mu^{\mathbf i}\eta^{\mathbf{k-i}}$$
on $\Delta'$. Apply \eqref{sum-s1m-sigma'} to each $\eta^{\mathbf{k-i}}$ and using linearity, we see that when $g$ is given by \eqref{monom-g} with respect to the canonical lifting $\sigma_0$,
\begin{align}\label{sum-s1m-pol}
S^{(g)}_{1|m}(\mathcal X,\mathcal L)
=&{{\mathbf{k}}}!\frac{(\mathcal L^{\sigma_0[{\mathbf k}]})^{n+|{\mathbf k}|+1}}{(n+|{\mathbf k}|+1)!}m^{n+1}-{{\mathbf{k}}}!\frac{K_{\mathcal X/\mathbb P^1}^{[{\mathbf k}]}\cdot(\mathcal L^{\sigma_0[{\mathbf k}]})^{n+|{\mathbf k}|}}{2(n+|{\mathbf k}|)!}m^{n}
\notag\\&+O(m^{n-1}),~m\to+\infty.
\end{align}

Also, it is direct to check
\begin{align}\label{sum-s2m-pol}
S^{(g)}_{2|m}(\mathcal X,\mathcal L)=&\frac12\sum_{(\eta;\lambda)\in\mathfrak X(T)\oplus\mathbb Z}\frac{\lambda}{m} \sum_{A=1}^r\frac{\eta_A}{m}\frac{\partial g}{\partial \eta_A}(\frac{\eta_A}{m})\dim{\rm H}^0(\mathcal X_0,m\mathcal L_0)^{(T\times T_0)}_{(\eta;\lambda)}\notag\\
=&\frac{|\mathbf k|}2{{\mathbf{k}}}!\frac{(\mathcal L^{\sigma_0[{\mathbf k}]})^{n+|{\mathbf k}|+1}}{(n+|{\mathbf k}|+1)!}m^{n}+O(m^{n-1}),~m\to+\infty.
\end{align}
Combining \eqref{sum-s1m-pol}-\eqref{sum-s2m-pol} with the fact that
\begin{align}\label{sum-h0}
{\rm h}^0(X,-mK_X)=V\frac{m^n}{n!}(1+\frac12nm^{-1}+O(m^{-2})),~m\to+\infty
\end{align}
for $L=-K_X$, we see that \eqref{S1-S2} holds, and the $g$-modified Futaki invariant is given by \eqref{Fut-g-monomial}.

Now we prove \eqref{Fut-general-g-def}. Suppose that $g$ polynomial satisfying \eqref{g-polynomial}. From the linearality of \eqref{g-polynomial}, it is easy to check that \eqref{sum-s1m-pol}-\eqref{sum-s2m-pol} also holds. By a direct computation one gets \eqref{Fut-general-g-def}.

Recall the fact that when $g$ is a monomial, $\sum_{A=1}^{r}\eta_A\frac{\partial g}{\partial \eta_A}=k_Ag$. 
By linearality of \eqref{Fut-general-g-def}, we directly see \eqref{S1-S2} and holds for any polynomial $g$.


\end{proof}

Recall \eqref{K-log-def}, we have $K_{\mathcal X/\mathbb P^1}=K_{\mathcal X/\mathbb P^1}^{\log}+(\mathcal X_0-\mathcal X_{0,{\rm red}})$. 
By Theorem \ref{Fut-g-smooth-thm} and the definition of ${\rm M}_g^{\rm NA}(\cdot)$, we have:
\begin{prop}\label{Fut=F-g}
Let $X$ be a $\mathbb Q$-Fano variety with effective $T$-action and $g\geq0$ a polynomial on $\Delta$ given by \eqref{g-polynomial}. Suppose that $(\mathcal X,\mathcal L)$ is a $T$-equivariant normal test configuration. Then $\frac V{V_g}{\rm Fut}_g(\mathcal X,\mathcal L)\geq{\rm M}_g^{\rm NA}(\mathcal X,\mathcal L)$. Moreover, the equality holds if and only if $(\mathcal X,\mathcal L)$ has reduced central fibre.
\end{prop}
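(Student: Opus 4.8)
The plan is to compare the intersection-number expression for $\frac{V}{V_g}{\rm Fut}_g(\mathcal X,\mathcal L)$ coming from Theorem \ref{Fut-g-smooth-thm} with the expression for ${\rm M}_g^{\rm NA}(\mathcal X,\mathcal L)$ in Proposition \ref{F-NA-general-lifting}, and to isolate the single term by which they differ. Working first with a monomial $g=\theta^{\mathbf k}$ and with the canonical lifting $\sigma_0$ (the general polynomial case then follows by the linearity relations \eqref{Fut-general-g-def} and \eqref{NNA-general-g-def}), formula \eqref{Fut-g-monomial} reads, after multiplying by $\frac{V}{V_g}$,
\[
\frac{V}{V_g}{\rm Fut}_g(\mathcal X,\mathcal L)=\frac{\mathbf{k}!}{V_g(n+|\mathbf k|)!}(K_{\mathcal X/\mathbb P^1}^{[\mathbf k]})(\mathcal L^{[\mathbf k]})^{n+|\mathbf k|}+(n+|\mathbf k|){\rm E}_g^{\rm NA}(\mathcal X,\mathcal L),
\]
where under the canonical lifting the $\rho^{[\mathbf k]*}K^{\sigma[\mathbf k]}_{X\times\mathbb P^1/\mathbb P^1}-\rho^{[\mathbf k]*}K^{[\mathbf k]}_{X\times\mathbb P^1/\mathbb P^1}$ term vanishes. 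On the other side, \eqref{M-g-mono} gives
\[
{\rm M}_g^{\rm NA}(\mathcal X,\mathcal L)=\frac{\mathbf{k}!}{V_g(n+|\mathbf k|)!}(\mathcal L^{[\mathbf k]})^{n+|\mathbf k|}\cdot (K^{\log}_{\mathcal X/\mathbb P^1})^{[\mathbf k]}+(n+|\mathbf k|){\rm E}_g^{\rm NA}(\mathcal X,\mathcal L).
\]
Hence the difference is exactly
\[
\frac{V}{V_g}{\rm Fut}_g(\mathcal X,\mathcal L)-{\rm M}_g^{\rm NA}(\mathcal X,\mathcal L)=\frac{\mathbf{k}!}{V_g(n+|\mathbf k|)!}\bigl((K_{\mathcal X/\mathbb P^1})^{[\mathbf k]}-(K^{\log}_{\mathcal X/\mathbb P^1})^{[\mathbf k]}\bigr)\cdot(\mathcal L^{[\mathbf k]})^{n+|\mathbf k|}.
\]

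The next step is to identify this divisor difference. By the remark recorded just before the statement, $K_{\mathcal X/\mathbb P^1}=K^{\log}_{\mathcal X/\mathbb P^1}+(\mathcal X_0-\mathcal X_{0,{\rm red}})$; passing to the fibre-product construction this persists as $(K_{\mathcal X/\mathbb P^1})^{[\mathbf k]}=(K^{\log}_{\mathcal X/\mathbb P^1})^{[\mathbf k]}+(\mathcal X_0-\mathcal X_{0,{\rm red}})^{[\mathbf k]}$, since the fibre-product operation $(\,\cdot\,)^{[\mathbf k]}=(\,\cdot\,)\times(\mathbb C^{\mathbf k+\mathbf 1}\setminus\{O\})/\mathbb T$ is additive on $T$-invariant Weil divisors. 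Writing $\mathcal X_0=\sum_i m_i E_i$ for the irreducible components $E_i$ of the central fibre with multiplicities $m_i\ge1$, we get $\mathcal X_0-\mathcal X_{0,{\rm red}}=\sum_i(m_i-1)E_i$, an effective divisor supported on $\mathcal X_0$. Therefore
\[
\frac{V}{V_g}{\rm Fut}_g(\mathcal X,\mathcal L)-{\rm M}_g^{\rm NA}(\mathcal X,\mathcal L)=\frac{\mathbf{k}!}{V_g(n+|\mathbf k|)!}\sum_i(m_i-1)\,E_i^{[\mathbf k]}\cdot(\mathcal L^{[\mathbf k]})^{n+|\mathbf k|}.
\]

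It remains to show each term on the right is nonnegative. The line bundle $\mathcal L^{[\mathbf k]}$ is nef on $\mathcal X^{[\mathbf k]}$ — indeed $\mathcal L$ is semiample on the compactified family (it is the pullback of an ample bundle from the product away from the central fibre and is relatively ample), so $\mathcal L^{[\mathbf k]}$, being glued from $\mathcal L$ and Fubini-Study contributions on the $\mathbb P^{k_A}$-factors, is nef; alternatively one perturbs to the ample lifting $\sigma'$ used in the proof of Theorem \ref{Fut-g-smooth-thm} and takes a limit. Each $E_i^{[\mathbf k]}$ is an effective $(n+|\mathbf k|)$-dimensional cycle, so intersecting it with $n+|\mathbf k|$ nef classes yields a nonnegative number, and $m_i-1\ge0$, $\mathbf k!>0$, $V_g>0$, $g\ge0$. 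This gives the inequality. For the equality case: if $\mathcal X_0$ is reduced then every $m_i=1$ and the difference vanishes identically. Conversely, if the difference is zero then $(m_i-1)\,E_i^{[\mathbf k]}\cdot(\mathcal L^{[\mathbf k]})^{n+|\mathbf k|}=0$ for every $i$; since $E_i^{[\mathbf k]}$ dominates $E_i$ and $\mathcal L^{[\mathbf k]}$ restricts on the horizontal directions to a big and nef class pulled from $\mathcal L|_{E_i}$ (which is ample, being a positive multiple of $-K_X$ on a fibre), one has $E_i^{[\mathbf k]}\cdot(\mathcal L^{[\mathbf k]})^{n+|\mathbf k|}>0$, forcing $m_i=1$ for all $i$, i.e. $\mathcal X_0$ reduced.

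I expect the main obstacle to be the positivity bookkeeping in the last step: one must justify that $\mathcal L^{[\mathbf k]}$ is genuinely nef on the (possibly singular) compactified fibre-product $\mathcal X^{[\mathbf k]}$ and that its restriction to each central-fibre component $E_i^{[\mathbf k]}$ remains big so that the self-intersection is strictly positive. The clean route is to reduce to the ample lifting $\sigma'$ exactly as in the proof of Theorem \ref{Fut-g-smooth-thm} — for which $\mathcal L^{\sigma'[\mathbf k]}$ is ample — prove the strict inequality $E_i^{\sigma'[\mathbf k]}\cdot(\mathcal L^{\sigma'[\mathbf k]})^{n+|\mathbf k|}>0$ there, and then invoke the change-of-lifting invariance (Proposition \ref{change-of-NA-func}) together with the corresponding invariance of ${\rm Fut}_g$ recorded in Section 2.2.1 to transfer the conclusion back to the canonical lifting; the $E_i$-contributions themselves are lifting-independent since $\mathcal X_0-\mathcal X_{0,{\rm red}}$ is.
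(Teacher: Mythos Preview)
Your identification of the difference $\frac{V}{V_g}{\rm Fut}_g-{\rm M}_g^{\rm NA}$ with the $(\mathcal X_0-\mathcal X_{0,{\rm red}})^{[\mathbf k]}$-term is exactly the route the paper takes, and the passage to an ample lifting $\sigma'$ is the right device. However, there is a genuine gap in the step ``the general polynomial case then follows by the linearity relations.'' The hypothesis is only that $g=\sum_{\mathbf k}a_{\mathbf k}\theta^{\mathbf k}\ge 0$ as a function on $\Delta$; the coefficients $a_{\mathbf k}$ may well be negative. After applying linearity the difference becomes
\[
\sum_i(m_i-1)\sum_{\mathbf k}a_{\mathbf k}\,\frac{\mathbf k!}{(n+|\mathbf k|)!}\,E_i^{[\mathbf k]}\cdot(\mathcal L^{[\mathbf k]})^{n+|\mathbf k|},
\]
and your nef/effective argument shows only that each summand $E_i^{[\mathbf k]}\cdot(\mathcal L^{[\mathbf k]})^{n+|\mathbf k|}\ge 0$. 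That does not give nonnegativity of the $\mathbf k$-sum when some $a_{\mathbf k}<0$. What is needed is to recognise the inner $\mathbf k$-sum as a single integral of $g$ itself:
\[
\sum_{\mathbf k}a_{\mathbf k}\,\frac{\mathbf k!}{(n+|\mathbf k|)!}\,(\mathcal L|_{E_i})^{[\mathbf k]\,n+|\mathbf k|}=\int_{\Delta_+(E_i,\mathcal L|_{E_i})}g(\lambda)\,\mathrm{DH}_T(E_i,\mathcal L|_{E_i})(\lambda),
\]
which is $\ge 0$ because $g\ge 0$ and the Duistermaat--Heckman measure is nonnegative. This is precisely the content of Lemma \ref{intersection-integral} in the Appendix, and it is how the paper closes the argument. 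The strict positivity for the equality case then comes from $\mathcal L|_{E_i}$ being ample (so the DH measure has full-dimensional support), not from identifying $\mathcal L|_{E_i}$ with $-K_X$ as you wrote; the components $E_i$ sit in the central fibre, not a general one.
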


\begin{proof}
Suppose that $\mathcal X_{0,{\rm red}}=\sum_{a=1}^{n_0}\mathcal X_{0,a}$ and $\mathcal X_0=\sum_{a=1}^{n_0}\mathcal X_{0,a}$ for reduced, irreducible varieties $X_{0,1},...,X_{0,n_0}$ and positive integers $m_1,...,m_{n_0}$. It suffices to show that under certain lifting $\sigma$ of the $T$-action,
\begin{align}\label{intersect-positive}
&\sum_{\mathbf k}a_{\mathbf k}(\mathcal L^{\sigma[\mathbf k]})^{n+|\mathbf k|+1}(\mathcal X_0-\mathcal X_{0,{\rm red}})^{[\mathbf k]}\notag\\
=&\sum_{a=1}^{n_0}(m_a-1)\sum_{\mathbf k}a_{\mathbf k} (\mathcal L^{\sigma[\mathbf k]})^{n+|\mathbf k|+1}\mathcal X_{0,a}\geq0.
\end{align}
In fact, from the change of lifting formulas \eqref{change-of-g-def} and \eqref{change-of-L}, it is direct to check that \eqref{intersect-positive} does not depends on the choice of lifting of the $T$-action on $-K_X$.

On the other hand, since $T$ acts on each $(\mathcal X_{0,a},\mathcal L|_{\mathcal X_{0,a}})$, by \cite[Section 2]{Han-Li} (see also Lemma \ref{intersection-integral} in the Appendix),
\begin{align*}
\sum_{\mathbf k}a_{\mathbf k}(\mathcal L^{\sigma[\mathbf k]})^{n+|\mathbf k|+1}\mathcal X_{0,a}=&\sum_{\mathbf k}a_{\mathbf k} (\mathcal L|_{\mathcal X_{0,a}}^{\sigma[\mathbf k]})^{n+|\mathbf k|}\\
=&\frac{(n+|\mathbf k|)!}{\mathbf k!}\int_{\Delta_+(\mathcal L|_{\mathcal X_{0,a}})}g(\lambda){\mathbf{DH}}_T(\mathcal X_{0,a},\mathcal L|_{\mathcal X_{0,a}})(\lambda),
\end{align*}
and we get \eqref{intersect-positive}.
\end{proof}

\begin{rem}
By using the equivariant Riemann-Roch formula, it is showed in \cite[Section 1]{WZZ} that the modified Futaki invariant exists when $g$ is exponential of the potential of the soliton vector field.
\end{rem}

\subsection{Variants of Stability}
\subsubsection{K-polistability} In the sense of \cite{Do,WZZ} we have
\begin{defi}\label{K-stab-def-Fut}
We say that a $\mathbb Q$-Fano $\mathfrak G$-variety $X$ is ($\mathfrak G$-equivariantly) $g$-modified K-semistable if the $g$-modified Futaki invariant for any $\mathfrak G\times T$-equivariant test configuration  is nonnegative, and is ($\mathfrak G$-equivariantly) $g$-modified K-polystable if in addition the $g$-modified Futaki invariant vanishes precisely on product $\mathfrak G\times T$-equivariant test configurations. When $X$ is not ($\mathfrak G$-equivariantly) $g$-modified K-semistable, we say it is $g$-modified K-unstable.
\end{defi}
Also, in the sense of \cite{Han-Li-KRS}, one can define the $g$-K-stability using the $g$-weighted non-Archimedean Mabuchi functional:
\begin{defi}\label{K-stab-def-MNA}
We say that a $\mathbb Q$-Fano $\mathfrak G$-variety $X$ is ($\mathfrak G$-equivariantly) $g$-K-semistable if the $g$-weighed non-Archimedean Mabuchi functional for any $\mathfrak G\times T$-equivariant test configuration  is nonnegative, and is ($\mathfrak G$-equivariantly) $g$-K-polystable if in addition the $g$-weighed non-Archimedean Mabuchi functional vanishes precisely on product $\mathfrak G\times T$-equivariant test configurations. When $X$ is not ($\mathfrak G$-equivariantly) $g$-K-semistable, we say it is $g$-K-unstable.
\end{defi}

By Proposition \ref{Fut=F-g}, we can prove that the $\mathfrak G$-equivariantly $g$-modified K-polystability and $\mathfrak G$-equivariantly $g$-K-polystability coincide with each other, provided $g$ is a polynomial with non-negative coefficients.
\begin{prop}\label{K-stab-equiv}
Let $X$ be a $\mathbb Q$-Fano variety with a reductive group $\mathfrak G$-action. Let $g\geq0$ be a polynomial. Then to test the $\mathfrak G$-equivariantly $g$-modified K-polystability or $g$-K-polystability of $X$, it suffices to consider $\mathfrak G$-equivariant normal test configurations with reduced central fibre. Consequently, $X$ is $\mathfrak G$-equivariantly $g$-modified K-polystable in the sense of Definition \ref{K-stab-def-Fut} if and only if it is $\mathfrak G$-equivariantly $g$-equivariantly $g$-K-polystable in the sense of Definition \ref{K-stab-def-MNA}.
\end{prop}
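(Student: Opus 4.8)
The plan is to reduce the polystability tests on both sides to normal test configurations with reduced central fibre, where the two functionals agree up to the positive constant $V_g/V$ (note $V_g>0$ since $g\ge 0$, $g\not\equiv 0$). The engine is Proposition \ref{Fut=F-g}: for any $T$-equivariant normal test configuration $(\mathcal X,\mathcal L)$ one has $\tfrac V{V_g}{\rm Fut}_g(\mathcal X,\mathcal L)\ge{\rm M}_g^{\rm NA}(\mathcal X,\mathcal L)$, with equality precisely when $\mathcal X_0$ is reduced; the only other ingredient needed is the degree-one homogeneity of ${\rm M}_g^{\rm NA}$ under base change.

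First I would fix a $\mathfrak G\times T$-equivariant normal test configuration $(\mathcal X,\mathcal L)$, write $\mathcal X_0=\sum_a m_a\mathcal X_{0,a}$, choose $d\in\mathbb N_+$ divisible by every $m_a$, and form $(\mathcal X^{(d)},\mathcal L^{(d)})$, the normalization of the base change of $(\mathcal X,\mathcal L)$ along $z\mapsto z^d$, with natural finite morphism $\nu^{(d)}\colon\mathcal X^{(d)}\to\mathcal X$ of degree $d$. This is again a $\mathfrak G\times T$-equivariant normal test configuration (the base change commutes with the $\mathfrak G\times T$-action, and normalization is functorial), and its central fibre is reduced by the choice of $d$. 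Then I would record ${\rm M}_g^{\rm NA}(\mathcal X^{(d)},\mathcal L^{(d)})=d\,{\rm M}_g^{\rm NA}(\mathcal X,\mathcal L)$: for monomial $g$ this comes from the projection formula for the degree-$d$ finite map induced on the fibre products $\mathcal X^{[\mathbf k]}$, together with $\nu^{(d)*}K^{\log}_{\mathcal X/\mathbb P^1}=K^{\log}_{\mathcal X^{(d)}/\mathbb P^1}$ (the ramification of $z\mapsto z^d$ along $\mathcal X_0$ being cancelled exactly by the $\mathcal X_{0,{\rm red}}$-term, cf.\ \cite{Boucksom-Hisamoto-Jonsson}), and the general polynomial case follows by linearity \eqref{NNA-general-g-def}. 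Applying Proposition \ref{Fut=F-g} to $\mathcal X^{(d)}$ (which has reduced central fibre) gives
\[
\frac V{V_g}{\rm Fut}_g(\mathcal X^{(d)},\mathcal L^{(d)})
={\rm M}_g^{\rm NA}(\mathcal X^{(d)},\mathcal L^{(d)})
=d\,{\rm M}_g^{\rm NA}(\mathcal X,\mathcal L)
\le d\,\frac V{V_g}{\rm Fut}_g(\mathcal X,\mathcal L),
\]
hence ${\rm Fut}_g(\mathcal X,\mathcal L)\ge\tfrac1d{\rm Fut}_g(\mathcal X^{(d)},\mathcal L^{(d)})$ while ${\rm M}_g^{\rm NA}(\mathcal X,\mathcal L)=\tfrac1d{\rm M}_g^{\rm NA}(\mathcal X^{(d)},\mathcal L^{(d)})$. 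Therefore nonnegativity of either functional on all $\mathfrak G\times T$-equivariant normal test configurations is equivalent to its nonnegativity on those with reduced central fibre; combined with the inequality of Proposition \ref{Fut=F-g} this already yields the equivalence of the two semistability notions and is the first half of the claimed reduction.

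For the polystability clauses I would proceed as follows. Since ${\rm Fut}_g$ and ${\rm M}_g^{\rm NA}$ are each linear in the generating one-parameter subgroup of a product test configuration, applying semistability to such a configuration and to its inverse shows both functionals vanish on every product test configuration; also a product test configuration has reduced central fibre $\cong X$. On a semistable $X$, if ${\rm Fut}_g(\mathcal X,\mathcal L)=0$ then $0=\tfrac V{V_g}{\rm Fut}_g(\mathcal X,\mathcal L)\ge{\rm M}_g^{\rm NA}(\mathcal X,\mathcal L)\ge 0$, which forces ${\rm M}_g^{\rm NA}(\mathcal X,\mathcal L)=0$ and, by the equality case of Proposition \ref{Fut=F-g}, $\mathcal X_0$ reduced; conversely, on the reduced-central-fibre locus the vanishing loci of the two functionals coincide, and for $\mathcal X$ with $\mathcal X_0$ non-reduced one passes to $\mathcal X^{(d)}$ and uses homogeneity. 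Combining this with the base-change reduction above identifies the clause ``${\rm Fut}_g$ vanishes precisely on product test configurations'' with ``${\rm M}_g^{\rm NA}$ vanishes precisely on product test configurations'', giving Proposition \ref{K-stab-equiv}. The one point I expect to require care is the last bit of bookkeeping: having reduced to $\mathcal X^{(d)}$ and found it to be a product, one must conclude that $\mathcal X$ itself counts as a product test configuration --- i.e.\ polystability is tested for normal test configurations up to base-change equivalence. This is the standard behaviour of product test configurations under base change, but it is the place where the ``precisely on products'' clause must be propagated carefully; everything else is formal from Proposition \ref{Fut=F-g} and the homogeneity of ${\rm M}_g^{\rm NA}$.
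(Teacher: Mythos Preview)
Your proposal is correct and follows essentially the same route as the paper: both use Proposition \ref{Fut=F-g}, the degree-$d$ normalized base change to reduce to test configurations with reduced central fibre, and the homogeneity ${\rm M}_g^{\rm NA}(\mathcal X^{(d)},\mathcal L^{(d)})=d\,{\rm M}_g^{\rm NA}(\mathcal X,\mathcal L)$. The one point you flag as requiring care --- that a non-product $(\mathcal X,\mathcal L)$ yields a non-product $(\mathcal X^{(d)},\mathcal L^{(d)})$ --- is precisely the fact the paper invokes from \cite[Section 5.1]{LL-202206}; the paper argues by contrapositive (start with a non-product destabilizer and produce a non-product destabilizer with reduced central fibre), which sidesteps your phrasing of the issue but is logically the same step.
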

\begin{proof}
Suppose that there is a non-product $\mathfrak G$-equivariant normal test configuration $(\mathcal X,\mathcal L)$ so that ${\rm Fut}_g(\mathcal X,\mathcal L)\leq0$. Then by Proposition \ref{Fut=F-g}, ${\rm M}^{\rm NA}_g(\mathcal X,\mathcal L)\leq0$. By \cite[Section 5.1]{LL-202206}, there is a sufficiently divisible $d\in\mathbb N_+$ so that the test configuration $(\mathcal X',\mathcal L')$ is non-product and has reduced central fibre. Then by Proposition \ref{Fut=F-g},
\begin{align}\label{K-stab-def-compare}
{\rm Fut}_g(\mathcal X',\mathcal L')={\rm M}^{\rm NA}_g(\mathcal X',\mathcal L')=d{\rm M}^{\rm NA}_g(\mathcal X,\mathcal L)\leq0.
\end{align}

On the other hand, suppose that there is a non-product $\mathfrak G$-equivariant normal test configuration $(\mathcal X,\mathcal L)$ so that ${\rm M}^{\rm NA}_g(\mathcal X,\mathcal L)\leq0$. Take $(\mathcal X',\mathcal L')$ as above, it holds \eqref{K-stab-def-compare}. Hence we get the Proposition.

\end{proof}

We will see that Proposition \eqref{K-stab-equiv} holds for arbitrary continuous $g$ when $X$ is a $\mathbb Q$-Fano spherical variety. See Section 5 below.

\subsubsection{$\mathfrak G$-uniform $g$-stability}
The following uniform stability are closely related to the existence of KR $g$-soliton:
\begin{defi}
Let $X$ be a $\mathbb Q$-Fano variety with a complex torus $T$-action. Let $\mathfrak G$ be a connected reductive subgroup of ${\rm Aut}_{T} (X)$ and $\mathfrak T$ be its
centre. Then we say that $X$ is $\mathfrak G$-uniformly $g$-Ding-stable ($g$-K-stable, resp.) if there exists a constant $\epsilon_0>0$ such that for any $\mathfrak G\times T$-equivariant test configuration $(\mathcal X,\mathcal L)$ it holds
$${\rm D}_g^{\rm NA}(\mathcal X,\mathcal L)~ ({\rm M}_g^{\rm NA}(\mathcal X,\mathcal L)~\text{resp.}) \geq\epsilon_0 \cdot \inf_{\sigma\in\mathfrak T}{\rm J}^{\rm NA}_g(\sigma^*(\mathcal X,\mathcal L)).$$
Here by $\sigma^*(\mathcal X,\mathcal L)$ we mean the twist of $(\mathcal X,\mathcal L)$ by $\sigma$.
\end{defi}
The precise relation ship between $\mathfrak G$-uniformly $g$-Ding-stability and existence of KR $g$-solitons are studied in \cite[Section 6]{Han-Li-KRS}.

\section{Polarized $G$-spherical varieties}
\subsection{Preliminaries on spherical varieties}
In the following we overview the theory of spherical varieties. The origin goes back to \cite{Luna-Vust}. We use \cite{Timashev-survey, Timashev-book} as main references.
\begin{defi}
Let $G$ be a connected, complex reductive group. A closed subgroup $H\subset G$ is called a {spherical subgroup of $G$} if there is a Borel subgroup $B$ of $G$ acts on $G/ H$ with an open orbit.
In this case $G/H$ is called a {spherical homogeneous space}. A {spherical embedding} of $G/H$ (or simply a spherical variety) is a normal variety $X$ equipped with a $G$-action so that there is an open dense $G$-orbit isomorphic to $G/H$.
\end{defi}

\subsubsection{Homogenous spherical datum}
Let $H$ be a spherical subgroup of $G$ with respect to the Borel subgroup $B$. The action of $G$ on the function field $\mathbb C(G/H)$ of $G/ H$ is given by
$$(g^*f)(x):=f(g^{-1}\cdot x),~\forall g\in G, x\in G/H~\text{and}~f\in\mathbb C(G/H).$$
A function $f(\not=0)\in\mathbb C(G/H)$ is called \emph{$B$-semiinvariant} if there is a character of $B$, denote by $\varpi$ so that $b^*f=\varpi(b)f$ for any $b\in B$.
By \cite[Section 25.1]{Timashev-book}, $\mathbb C(G/H)^B=\mathbb C$. Two $B$-semiinvariant functions associated to a same character can differ from each other only by multiplying a non-zero constant.

Let $\mathfrak M(G/H)$ be the lattice of $  B$-characters which admits a corresponding $B$-semi-invariant functions, and $\mathfrak N(  G/  H)={\rm Hom}_\mathbb Z(\mathfrak M(  G/  H),\mathbb Z)$ its $\mathbb Z$-dual. The rank $r_0$ of $\mathfrak M(G/H)$ is called the \emph{rank of $G/H$}. There is a map $\varrho$ which maps a valuation $D$ of $\mathbb C(  G/  H)$ to an element $\varrho_D$ in $\mathfrak N_\mathbb Q(G/H)=\mathfrak N(G/H)\otimes_\mathbb Z\mathbb Q$ by
$$\varrho_D(\varpi)=\nu(f),$$
where $f\in\mathbb C(G/H)^{(B)}_\varpi$. Again, this is well-defined since $\mathbb C(G/H)^B=\mathbb C$. It is a fundamental result that $\varrho$ is injective on $  G$-invariant valuations and the image forms a convex cone $\mathcal V(  G/  H)$ in $\mathfrak N_\mathbb  Q(  G/  H)$, called the \emph{valuation cone} of $  G/  H$ (cf. \cite[Section 19]{Timashev-book}).
 Moreover, $\mathcal V(  G/  H)$ is a solid cosimplicial cone 
which is a (closed) fundamental chamber of a certain crystallographic reflection group, called the {\it little Weyl group} (denoted by $W^{G/H}$, cf. \cite[Sections 22]{Timashev-book}). In fact, $W^{G/H}$ is the Weyl group of the \emph{spherical root system} $\Phi^{G/H}$ of $  G/  H$ (cf. \cite[Section 30]{Timashev-book}).
The simple spherical roots are defined to be the primitive generators of edges of $(-\mathcal V(G/H))^\vee$. The set of simple spherical roots is denoted by $\Pi_{G/H}$.

A $  B$-stable prime divisors in $  G/  H$ is called a \emph{colour}. Denote by $\mathcal D(  G/  H)$ the set of colours. A colour $D\in\mathcal D(  G/  H)$ also defines a valuation on $  G/  H$. However, the restriction of $\varrho$ on $\mathcal D(  G/  H)$ is usually non-injective.

Now we briefly introduce the homogeneous spherical datum, which by a deep result in \cite{Losev} (cf. \cite[Theorem 30.22]{Timashev-book}) characterizes the spherical homogeneous space. Let $P_\alpha$ be the minimal standard parabolic subgroup of $G$ containing $B$ corresponding to the simple root $\alpha\in\Pi_G$. Set
$$\mathcal D(\alpha):=\{D\in\mathcal D(G/H)|D~\text{is not}~P_\alpha\text{-stable}\}.$$
Then $\mathcal D(G/H)=\cup_{\alpha\in\Pi_G} \mathcal D(\alpha)$. We see that a colour  $D\in\mathcal D(G/H)$ is of
\begin{itemize}
\item type a (denote the collection by $\mathcal D^a(G/H)$): if $D\in\mathcal D(G/H)$ for $\alpha\in\Phi^{G/H}$;
\item type a' (denoted by $\mathcal D^{a'}(G/H)$): if $D\in\mathcal D(G/H)$ for $2\alpha\in\Phi^{G/H}$;
\item type b (denoted by $\mathcal D^b(G/H)$): Otherwise.
\end{itemize}
Note that although a colour $D$ may belong to different $\mathcal D(\alpha)$, the type of $D$ is well-defined. Also, set
$$\Pi_{G/H}^p:=\{\alpha\in\Pi_G|\mathcal D(\alpha)=\emptyset\},$$
and $\mathcal D^a(G/H)$ the set of all colours of type a.
\begin{defi}
The quadruple $(\mathfrak M(G/H), \Phi^{G/H}, \Pi_{G/H}^p, \mathcal D^a(G/H))$ is called the homogeneous spherical datum of $G/H$.
\end{defi}
The homogeneous spherical datum was introduced by \cite{Luna}. It is proved by \cite{Losev} that the homogeneous spherical datum uniquely
determines $G/H$ up to $G$-equivariant isomorphism. The axioms that an abstract quadruple $(\bar{\mathfrak M}, \bar\Phi, \bar\Pi^p, \bar{\mathcal D}^a)$ forms a homogeneous spherical datum can be found in \cite[Section 30]{Timashev-book}.

\subsubsection{Line bundles and polytopes}
Let $X$ be a complete spherical variety, which is a spherical embedding of some $  G/  H$.
Let $L$ be a $  G$-linearlized line bundle on $X$.
In the following we will associated to $(X,L)$ several polytopes, which encode the geometric structure of $X$.
\subsubsection*{Moment polytope of a line bundle}
Let $(X,L)$ be a polarized spherical variety. Then for any $k\in\mathbb N$ we can decompose ${\rm H}^0(X,L^k)$ as direct sum of irreducible $G$-representations,
\begin{align}\label{H0kL}
{\rm H}^0(X,L^k)=\bigoplus_{ \lambda\in \Delta_{L,k} } V_{ \lambda},
\end{align}
where $\Delta_{L,k}$ is a finite set of $  B$-weights and each $  V_{ \lambda}$ is called an \emph{isotypic component}, which is isomorphic to the irreducible representation of $G$ with highest weight $ \lambda$.
Set
$$\Delta_+(L):=\overline{\cup_{k\in\mathbb N}(\frac1k\Delta_{L,k})}.$$
.
Denote by $\mathfrak X(B)$ the lattice of $B$-weights. Then $\Delta_+(L)$ is indeed a polytope in $\mathfrak X_{\mathbb R}(B)$. Moreover, denote by $\Phi_+^G$ the positive roots of $G$ with respective to $B$, then $\Delta_+(L)$ lies in the dominant Weyl chamber determined by $\Phi_+^G$. We call $\Delta_+(L)$ the \emph{moment polytope of $(X,L)$} (cf. \cite[Section 17]{Timashev-book}). Clearly, the moment polytope of $(X,L^k)$ is $k$-times the moment polytope of $(X,L)$ for any $k\in\mathbb N_+$.

We also introduce here a useful weight function $\pi(\lambda)$ defined for $\lambda\in\Delta_+(L)$. Suppose that $\lambda\in\mathfrak M(G/H)$. By Weyl character formula \cite[Section 3.4.4]{Zhelobenko-Shtern},
\begin{align*}\dim(V_\lambda)=&\frac{\prod_{\alpha\in\Phi_+^G,\alpha\not\perp\Delta_+(L)}\langle\alpha,\rho+k\lambda\rangle}{\prod_{\alpha\in\Phi_+^G,\alpha\not\perp\Delta_L}\langle\alpha,\rho\rangle},\\
=&C_{G/H}(\pi(\lambda)+\rho(\nabla\pi(\lambda))+(\text{lower order terms})),
\end{align*}
where $\rho:=\frac12\sum_{\alpha\in\Phi_+^G}\alpha$, the constant $$C_{G/H}=\frac1{\prod_{\alpha\in\Phi_+^G,\alpha\not\perp\Delta_+(L)\langle\alpha,\rho\rangle}},$$
and
\begin{align}\label{pi-weight}\pi(\lambda)=\prod_{\alpha\in\Phi_+^G,\alpha\not\perp\Delta_+(L)}\langle\alpha,\lambda\rangle.
\end{align}

\subsubsection*{Polytope of a divisor}
Recall that the spherical embedding $X$ of $G/H$ is uniquely determined by its coloured fan $\mathfrak F_X$ (cf. \cite{Luna-Vust, Timashev-survey, Timashev-book}). Denote by $\mathcal I_{  G}(X)=\{D_1,...,D_{d_0}\}$ the set of $  G$-invariant prime divisors in $X$.
Then any $D\in\mathcal I_{  G}(X)$ corresponds to a $1$-dimensional cone $(\mathfrak C_D,\emptyset)$ in the coloured fan $\mathfrak F_X$ of $X$.
Denote by $u_D$ the prime generator of $\mathfrak C_D$. Recall that $\mathcal D(  G/  H)$ is the set of colours, which are $  B$-stable but not $  G$-stable in $X$. Any $  B$-stable $\mathbb Q$-Weil divisor can be written as
\begin{align}\label{weil-div}
{\mathfrak d}=\sum_{D\in\mathcal I_G(X)}c_DD+\sum_{D\in\mathcal D(  G/  H)}c_DD
\end{align}
for some $c_D\in\mathbb Q$. Set
$$\mathcal D_X:=\cup\{\mathfrak R\subset\mathcal D(  G/  H)|~\exists(\mathfrak C,\mathfrak R)\in\mathfrak F_X\}.$$
By \cite[Proposition 3.1]{Brion89} (cf. \cite[Section 17]{Timashev-book}), ${\mathfrak d}$ is further a $\mathbb Q$-Cartier divisor if and only if there is a rational piecewise linear function $l_{\mathfrak d}(\cdot)$ on $\mathfrak F_X$ such that
$$c_D=l_{\mathfrak d}(u_D),~D\in\mathcal I_G(X)~\text{and}~c_D=l_{\mathfrak d}(\varrho_D),~\forall D\in\mathcal D_X.$$
It is further proved in that when ${\mathfrak d}$ is ample $l_{\mathfrak d}(-x):\mathfrak N_\mathbb R(  G/  H)\to\mathbb R$ is the support function of some convex polytope $\Delta_{\mathfrak d}\subset\mathfrak M_\mathbb R(G/H)$. We call the $\Delta_{\mathfrak d}$ the \emph{polytope of ${\mathfrak d}$}.

Suppose that $s$ is a $  B$-semiinvariant section of $L$ with respect to a character $\chi$. Let ${\mathfrak d}$ be the divisor of $s$. We have
\begin{prop}\label{polytope-prop}(cf. \cite[Theorem 3.30]{Timashev-survey})
The two polytopes $\Delta_L$ and $\Delta_{\mathfrak d}$ are related by $$\Delta_L=\chi+\Delta_{\mathfrak d}.$$
\end{prop}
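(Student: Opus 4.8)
The plan is to recall the argument behind \cite[Theorem 3.30]{Timashev-survey}, realizing both polytopes through spaces of $B$-semiinvariant rational functions. Since $s$ is a global $B$-semiinvariant section of $L$ of weight $\chi$ with $\operatorname{div}(s)={\mathfrak d}$, the section $s^{\otimes k}$ of $L^k$ is $B$-semiinvariant of weight $k\chi$ and has divisor $k{\mathfrak d}$; hence $\mathcal O_X(k{\mathfrak d})\cong L^k$ and the map $\sigma\mapsto \sigma/s^{\otimes k}$ gives, for every $k\in\mathbb N$, a linear isomorphism
$$H^0(X,L^k)\ \xrightarrow{\ \sim\ }\ \{f\in\mathbb C(X):\operatorname{div}(f)+k{\mathfrak d}\ge 0\}=H^0(X,\mathcal O_X(k{\mathfrak d})),$$
which is $B$-equivariant up to a shift of weights by $k\chi$: a $B$-semiinvariant section of $L^k$ of weight $\mu$ is carried to a $B$-semiinvariant rational function of weight $\mu-k\chi$. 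Because $X$ is spherical, both sides are multiplicity free, so this identifies the set $\Delta_{L,k}$ of highest weights in \eqref{H0kL} with $k\chi+W_k$, where $W_k$ is the set of weights $\lambda\in\mathfrak M(G/H)$ admitting a $B$-semiinvariant $f_\lambda$ with $\operatorname{div}(f_\lambda)+k{\mathfrak d}\ge 0$.

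The second step is to show $\overline{\bigcup_{k}\tfrac1k W_k}=\Delta_{\mathfrak d}$. For a $B$-semiinvariant $f_\lambda$ of weight $\lambda$ one has $\operatorname{ord}_D(f_\lambda)=\langle u_D,\lambda\rangle$ for $D\in\mathcal I_G(X)$ and $\operatorname{ord}_D(f_\lambda)=\langle\varrho_D,\lambda\rangle$ for a colour $D$; combining this with $\operatorname{ord}_D({\mathfrak d})=l_{\mathfrak d}(u_D)$, resp. $l_{\mathfrak d}(\varrho_D)$, from \cite[Proposition 3.1]{Brion89} (cf. \cite[Section 17]{Timashev-book}), the effectivity condition $\operatorname{div}(f_\lambda)+k{\mathfrak d}\ge 0$ translates into the linear inequalities $\langle u_D,\tfrac1k\lambda\rangle+l_{\mathfrak d}(u_D)\ge 0$ over $\mathcal I_G(X)$, the analogous inequalities over $\mathcal D_X$, and $\langle\varrho_D,\lambda\rangle\ge 0$ over the remaining colours. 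By the cited results these are precisely the inequalities cutting out $\Delta_{\mathfrak d}$ (the polytope with support function $l_{\mathfrak d}(-\,\cdot\,)$, well defined because ${\mathfrak d}$ is ample), so $\tfrac1k\lambda\in\Delta_{\mathfrak d}$ for $\lambda\in W_k$; conversely ampleness forces $k{\mathfrak d}$ to be very ample for $k\gg 0$, hence every rational point of $\Delta_{\mathfrak d}$ of the appropriate denominator lies in some $\tfrac1k W_k$, and passing to the closure recovers all of $\Delta_{\mathfrak d}$. Combining the two steps, $\Delta_L=\overline{\bigcup_k\tfrac1k\Delta_{L,k}}=\overline{\bigcup_k(\chi+\tfrac1k W_k)}=\chi+\Delta_{\mathfrak d}$.

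I expect the main obstacle to be the second step: carefully matching the divisorial positivity condition $\operatorname{div}(f_\lambda)+k{\mathfrak d}\ge 0$ with the facet inequalities of $\Delta_{\mathfrak d}$ (including the contribution of colours outside $\mathcal D_X$), and verifying that the closure of the rescaled weight sets sweeps out the whole polytope and not just a proper subset --- this is exactly where ampleness of ${\mathfrak d}$, equivalently of $L$, is used, via very ampleness of large multiples and the convexity of $l_{\mathfrak d}(-\,\cdot\,)$. The first step is essentially formal once one keeps track of the $G$-linearizations and uses that $s$ is a genuine global section, so that ${\mathfrak d}=\operatorname{div}(s)$ is effective and $\mathcal O_X({\mathfrak d})\cong L$.
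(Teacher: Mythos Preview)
The paper does not give its own proof of this proposition; it simply records the statement and cites \cite[Theorem 3.30]{Timashev-survey}. Your two-step outline (trivialize $L^k$ by $s^{\otimes k}$ to shift weights by $k\chi$, then translate the effectivity condition $\operatorname{div}(f_\lambda)+k\mathfrak d\ge 0$ into the facet inequalities of $\Delta_{\mathfrak d}$) is exactly the standard argument behind that reference, so your approach matches what the paper defers to.

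One small correction: for a colour $D\notin\mathcal D_X$ the effectivity condition is not $\langle\varrho_D,\lambda\rangle\ge 0$ but $\langle\varrho_D,\lambda/k\rangle+c_D\ge 0$, with $c_D$ the coefficient of $D$ in $\mathfrak d$. These extra inequalities are, however, redundant once $\mathfrak d$ is ample---this is part of Brion's ampleness criterion (cf. \cite[Proposition 3.1]{Brion89}, \cite[Section 17]{Timashev-book})---so your conclusion stands, but the reasoning you wrote for that case should be adjusted. Also, the statement does not require $s$ to be a global section; a $B$-semiinvariant \emph{rational} section suffices for the isomorphism $\sigma\mapsto\sigma/s^{\otimes k}$, and indeed the paper immediately after this proposition works with rational sections. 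None of this affects the validity of your argument.
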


Thus, fix any $B$-semiinvariant rational section $s_0$ of $L$ with $B$-weight $\chi_0$, then $\Delta_+(L)\subset\mathfrak A_\mathbb R(L):=\mathfrak M_\mathbb R(G/H)+\chi_0$. Indeed, $\mathfrak A_\mathbb R(L)$ does not depends on the choice of $s_0$ and is the affine span of $\Delta_+(L)$. Note that when a finite covering of $G/H$ is $\mathbb Q$-factorial, or $G/H$ is the flat limit of some space with $\mathbb Q$-factorial finite covering, we can choose $\chi_0\in\mathfrak M_\mathbb R$ and therefore $\Delta_L\subset\mathfrak M_\mathbb R(G/H)$.

\subsubsection{The anticanonical line bundle} When $X$ is ($\mathbb Q$-)Fano and $L=-K_X$, \cite{Ga-Ho} proved there is a canonical choice of $B$-invariant divisor ${\mathfrak d_0}$ of $-K_X$ in form of \eqref{weil-div},
\begin{align*}
{\mathfrak d_0}=\sum_{A=1}^{d_0}D_A+\sum_{D\in\mathcal D_{  B}(  G/  H)}m_DD,
\end{align*}
where the coefficients $m_D$'s are explicitly obtained in \cite{Ga-Ho} according to the type of each colour $D$ (cf. \cite[Theorem 1.5]{Ga-Ho}). In fact, this divisor corresponds to a $B$-semiinvariant section $\mathfrak d_0$ of
$-K_X$ (in case $X$ is Gorenstein Fano) with $B$-weight
\begin{align}\label{kappa}
\kappa_P=\sum_{\alpha\in\Phi^G_+,\alpha\not\perp\Delta_+(-K_X)}\alpha.
\end{align}
The polytope of $\mathfrak d_0$ is the ($\mathbb Q$-)reflexible polytopes defined in \cite{Ga-Ho}.

\subsubsection{The $G$-equivariant automorphism group}
Let $X$ be a $G$-spherical variety, which is a spherical embedding of some $G/H$. It is known that the $G$-automorphism group ${\rm Aut}_{G}(G/H)=N_G(H)/H$ and is a commutative group (cf. \cite[Proposition 1.8]{Timashev-book}). The action of $G\times N_G(H)/H$ on $G/H$ is defined as
$$(g,p)g_0H:=gg_0p^{-1}H,~\forall g,g_0\in G~\text{and}~p\in N_G(H)/H.$$
This action is well-defined since $p^{-1}Hp=H$. Its neutral component ${\rm Aut}^0_G(G/H)$ is isomorphic to the neutral compone ${\rm Aut}^0_G(X)$ of ${\rm Aut}_G(X)$ (cf. \cite{Losev}).

It is known that for a spherical variety $X$, ${\rm Aut}^0_G(X)$ is a complex torus (cf. \cite[Corollary 6.5]{Knop-JAMS}). Let $(X,L)$ be a polarized spherical variety, then the Kodaira ring (homogeneous coordinate ring) of $X$ is
\begin{align*}
R(X,L)=\oplus_{k\in\mathbb N}R_k,~R_k={\rm H}^0(X,kL)=\oplus_{\lambda\in\Delta_{L,k}}V_\lambda.
\end{align*}
The group ${\rm Aut}^0_G(X)$ acts on $R(X,L)$ preserving each $R_k$. Let $\xi\in\mathfrak{aut}^0_G(X)$ be a rational element which generates a $1$-dimensional torus $T_\xi$-action. Then each $R_k$ can be decomposed into direct sums of irreducible $T_\xi$-representations. Since the $T_\xi$-action commutes with the $G$-action, each isotypic component $V_\lambda,~\lambda\in\Delta_{L,k}$ is $T_\xi$-invariant, and $\xi$ acts on any $s\in V_\lambda$ through a common weight $\nu_\xi(\lambda)$.

Suppose that $\lambda_i\in\Delta_{L,k_i},i=1,2$. Then for any $s_i\in V_{\lambda_i}$, it holds
\begin{align*}
\xi(s_1\cdot s_2)=\xi(s_1)+\xi(s_2).
\end{align*}
On the other hand, by \cite[Proposition 3.1]{LL-202206},
\begin{align}\label{mult-rule}
V_{\lambda_1}\cdot V_{\lambda_2}=\bigoplus_{\lambda_1+\lambda_2-\beta_i}V_{\lambda_1+\lambda_2-\beta_i},
\end{align}
where each $\beta_i$ is a non-negative $\mathbb Q$-linear combination of simple spherical roots. In particular there is a component with $\beta=0$. Thus for each $\beta_i$ appeared in \eqref{mult-rule},
\begin{align}\label{nu-xi-(s1s2)}
\nu_\xi(\lambda_1+\lambda_2-\beta_i)=\nu_\xi(\lambda_1+\lambda_2)=\xi(\lambda_1)+\xi(\lambda_2).
\end{align}
Combining with \cite[Remark 3.3]{LL-202206} we can conclude from the first equality of \eqref{nu-xi-(s1s2)} that
\begin{align}\label{ortho-to-sph-roots}
\nu_\xi(\alpha)=0,~\alpha\in\Phi^{G/H}.
\end{align}
On the other hand, from the second equality of \eqref{nu-xi-(s1s2)}, it holds
$$\nu_\xi(p\lambda)=p\nu_\xi(\lambda),~\forall p\in\mathbb N,$$
and
$$\nu_\xi(\frac{k_1}{k_1+k_2}\frac{\lambda_1}{k_1}+\frac{k_2}{k_1+k_2}\frac{\lambda_2}{k_2})=\frac{k_1}{k_1+k_2}\nu_\xi(\frac{\lambda_1}{k_1})+\frac{k_2}{k_1+k_2}\nu_\xi(\frac{\lambda_2}{k_2}).$$
Thus $\nu_\xi(\cdot)$ descends to an affine function
\begin{align*}
\nu_\xi(\lambda)=V_\xi(\lambda)+\chi_\xi,~\lambda\in\Delta_+(L)
\end{align*}
on $\Delta_+(L)$ so that for each $\lambda\in\Delta_{L,k}$,
\begin{align}\label{nu-xi-Delta}
\nu_\xi(\lambda)=kV_\xi(\frac1k\lambda)+k\chi_\xi.
\end{align}
Moreover, $V_\xi\in\mathcal V_z(G/H):=\mathcal V(G/H)\cap(-\mathcal V(G/H))$, the central part of $\mathcal V(G/H)$, and different choices of the constant $\chi_\xi$ correspond to different liftings of the $T_\xi$-action on $L$. In the following, we will identify $\xi\in\mathfrak{aut}^0_G(X)$ with $V_\xi\in\mathcal V_z(G/H)$.

Suppose that $T\subset{\rm Aut}^0_G(X)$ is an $r$-dimensional torus. Then we can choose a set of generators $\{\xi_A\}_{A=1}^r\subset\mathcal V_z(G/H)$. Let $\xi_A^*\in\mathfrak t^*$ be the dual of $\xi_A$. The $T$-weights on each $R_k$ is given by \eqref{nu-xi-Delta} and each choice of the character $\chi=\sum_{A=1}^r\chi_A\xi_A^*$ of $T$ correspond to a lifting of the $T$-action on $L$.

As showed in \cite[Theorem 4.2]{Li-Wang}, the character associated to the canonical lifting is
\begin{align}\label{chi-0-can}
\chi_0=-\sum_{A=1}^r{\kappa_P}_A\xi_A^*,
\end{align}
the restriction of $-\kappa_P$ on $\mathfrak t$.

\subsection{The polarized variety $(X^{[\mathbf{k}]},L^{[\mathbf{k}]})$}

In this section we compute the combinatorial data of $(X^{[\mathbf{k}]},L^{[\mathbf{k}]})$ for a general polarized $G$-spherical variety $(X,L)$. Let $(X,L)$ be a polarized spherical embedding of $G/H$. We further assume that $L$ is $G\times N_G(H)$-linearized.

Suppose that $\xi\in\mathfrak{aut}_G(X)$ which generates a rank $r$ torus $T\subset{\rm Aut}_G^0(X)$. Denote by $\mathfrak t$ the Lie algebra of $T$. Fix a lifting of the $T$-action on $L$ with corresponding $T$-character $\chi$. 
From the previous section we have the embedding
$${\mathfrak t}\overset{\iota_1}{\hookrightarrow}\mathfrak{aut}_G(X)\cong\mathcal V_{z\mathbb R}(G/H)\overset{\iota_2}{\hookrightarrow}\mathfrak g,$$
and $(e,p)H=p^{-1}H=(p^{-1},e)H$ for any $p\in N_G(H)/H$. Then $\mathfrak t$ is identified with an $r$-dimensional rational linear subspace of $\mathcal V_z(G/H)$, and the moment map $\mathbf{m}_{\omega_\phi}:X\to\Delta\in\mathfrak t^*$ can be decomposed as
$\mathbf{m}_{\omega_\phi}(\cdot)={\mathfrak r}\circ\mathbf{\mu}_{\omega_\phi}(\cdot)+\chi$, where $$\mathbf{\mu}_{\omega_\phi}:X\to\Delta_+(L)\subset\mathfrak X_\mathbb R(B)$$
is the moment map with respect to the $T_0$-action for the maximal torus $T_0=B\cap B^-$ of $G$, and $\mathfrak r:\mathfrak t^*_0\to\mathfrak t^*$ is the restriction map defined by $\mathfrak r(\lambda)=\lambda|_\mathfrak t,~\forall\lambda\in\mathfrak X_\mathbb R(B)(=\mathfrak t^*_0)$. Here we consider the restriction $\mathfrak r(\lambda)$ of an element $\lambda\in\mathfrak t_0^*$ on $\mathfrak t$ as an element in $\mathfrak t^*$. Consequently,
$$\Delta=\{\mathfrak r(\lambda)+\chi\in\mathfrak t^*|\lambda\in\Delta_+(L)\}.$$



Recall the construction of $(X^{[\mathbf{k}]},L^{[\mathbf{k}]})$ in Section 2.1, \emph{Step-1}.
Let $\{\xi'_A\}_{A=1}^r$ be a set of generators of $\mathbb T$ and $\xi_A=\iota(\xi'_A)$. Then $\xi_A\in\mathfrak t,A=1,...,r$ generate $T$. Also denote by $T^{[\mathbf{k}]+\mathbf{1}}=\prod_{A=1}^r{(\mathbb C^*)}^{k_A+1}$ the $(|\mathbf k|+r)$-dimensional complex torus. We have:
\begin{prop}\label{Xk-Lk-structure}
Let $(X,L)$ be a polarized spherical embedding of $G/H$ with moment polytope $\Delta_+(L)$. Suppose that the lifting of $T$ on $L$ is chosen so that the moment polytope $\Delta$ lies in the first quadrant of $\mathfrak t^*$, that is, $\xi_A(\lambda+\chi)\geq0$ for any $A\in\{1,...,r\}$ and $\lambda\in\Delta$. Then $(X^{[\mathbf{k}]},L^{[\mathbf{k}]})$ is a polarized $G\times T^{[\mathbf{k}]+\mathbf{1}}$-spherical variety with polytope
\begin{align}\label{Xk-Lk-polytope}
\Delta_+(L^{[\mathbf k]})=\{(\lambda;\mu_{(A),i_A})\in\Delta_+(L)\oplus\oplus_{A=1}^r\mathbb R^{k_A+1}_{\geq0}|~\lambda_A+\chi_A=\sum_{i_A=0}^{k_A}\mu_{(A),i_A}\},
\end{align}
where $\chi$ is the $T$-character corresponding to the lifting and $\lambda_A$ is the restriction of the character $\lambda$ on the $1$-dimensional torus $\exp(t\xi_A)$.
\end{prop}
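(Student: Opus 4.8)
The plan is to reduce the statement to a description of the Kodaira ring of $(X^{[\mathbf k]}, L^{[\mathbf k]})$ as a $G\times T^{[\mathbf k]+\mathbf 1}$-algebra, and then read off sphericity and the moment polytope from that ring. First I would recall that $X^{[\mathbf k]} = (X\times(\mathbb C^{[\mathbf k]+\mathbf 1}\setminus\{O\}))/\mathbb T$ with $L^{[\mathbf k]}$ the descent of the external tensor product of $L$ with $\mathcal O(1)$ on each $\mathbb P^{k_A}$, twisted by the lifting character $\chi$. Since the $\mathbb T$-action used in the quotient is a combination of the $T$-action on $X$ (through the fixed lifting) and the standard scaling on the $\mathbb C^{k_A+1}$ factors, one has for each $m\in\mathbb N$
\begin{align*}
{\rm H}^0(X^{[\mathbf k]}, mL^{[\mathbf k]}) = \left({\rm H}^0(X, mL)\otimes \bigoplus_{A=1}^r {\rm H}^0(\mathbb P^{k_A},\mathcal O(m))\right)^{\mathbb T\text{-weight }0},
\end{align*}
where $\mathbb T$ acts on ${\rm H}^0(X,mL)$ via the chosen lifting and on ${\rm H}^0(\mathbb P^{k_A},\mathcal O(m))$ via the scaling. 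Decomposing ${\rm H}^0(X,mL)=\bigoplus_{\lambda\in\Delta_{L,m}}V_\lambda$ into $G$-isotypic components, $T$ acts on $V_\lambda$ through the single character $m\chi + \nu(\lambda)$ where $\nu(\lambda)=\mathfrak r(\lambda)$ on $\mathfrak t$ (using \eqref{nu-xi-Delta} and the identification of $\xi_A$ with $V_{\xi_A}\in\mathcal V_z(G/H)$), and ${\rm H}^0(\mathbb P^{k_A},\mathcal O(m))$ has the monomial basis $z^{\alpha^{(A)}}$, $|\alpha^{(A)}|=m$, on which the $A$-th scaling torus acts with weight $\alpha^{(A)}$. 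The weight-zero condition is then exactly $\lambda_A+\chi_A=\sum_{i_A}\mu_{(A),i_A}$ after rescaling by $1/m$, which produces \eqref{Xk-Lk-polytope} in the limit.

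Next I would establish sphericity. A Borel subgroup of $G\times T^{[\mathbf k]+\mathbf 1}$ is $B\times T^{[\mathbf k]+\mathbf 1}$; its open orbit on $X^{[\mathbf k]}$ is the image of $(BH/H)\times ((\mathbb C^*)^{[\mathbf k]+\mathbf 1})$ under the quotient map, which is open and dense because $BH/H$ is open dense in $G/H$ and the torus orbit is open dense in $\mathbb C^{[\mathbf k]+\mathbf 1}\setminus\{O\}$ — here one checks the quotient by $\mathbb T$ does not collapse this (the $\mathbb T$-action is free on the second factor away from coordinate hyperplanes). Hence $X^{[\mathbf k]}$ is a spherical embedding of the homogeneous space $(G/H)\times T^{[\mathbf k]+\mathbf 1}/\mathbb T$. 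The weight lattice $\mathfrak M$ of this homogeneous space is $\mathfrak M(G/H)\oplus(\text{character lattice of }T^{[\mathbf k]+\mathbf 1}/\mathbb T)$, and the description above of the $G\times T^{[\mathbf k]+\mathbf 1}$-isotypic components of the Kodaira ring identifies $\Delta_+(L^{[\mathbf k]})$ inside $\mathfrak M_\mathbb R$ precisely as the set cut out by $\lambda_A+\chi_A=\sum_{i_A=0}^{k_A}\mu_{(A),i_A}$ together with $\mu_{(A),i_A}\ge 0$ (the latter from effectivity of sections of $\mathcal O(m)$ on $\mathbb P^{k_A}$) and $\lambda\in\Delta_+(L)$. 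The hypothesis that the lifting is chosen with $\Delta$ in the first quadrant, i.e. $\lambda_A+\chi_A\ge 0$, guarantees this polytope is nonempty and full-dimensional in the expected affine slice, so $L^{[\mathbf k]}$ is genuinely ample (or at least semiample big) and defines the stated polarized structure.

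The main obstacle, I expect, is the bookkeeping of characters and liftings: keeping straight which copy of the torus acts through which lifting, verifying that the $\mathbb T$-quotient interacts correctly with $G$-isotypic decomposition (so that $V_\lambda$ really does contribute with the single shifted weight $m\chi+\mathfrak r(\lambda)$ and is not further decomposed), and confirming the normalization of the Fubini--Study factors matches the one fixed in the footnote in Section 2.1. A secondary technical point is that strictly speaking one must argue at the level of the section rings for all large $m$ and then pass to the closure to get the polytope — this requires knowing $R(X^{[\mathbf k]}, L^{[\mathbf k]})$ is generated in bounded degree, which follows since it is a subalgebra of the tensor product of finitely generated algebras and is itself the Kodaira ring of a projective variety. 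Once these points are pinned down, \eqref{Xk-Lk-polytope} follows by comparing the two combinatorial descriptions of $\bigcup_m \frac1m \Delta_{L^{[\mathbf k]},m}$.
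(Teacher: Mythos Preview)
Your overall plan---identify sections of $L^{[\mathbf k]}$ with $\mathbb T$-invariant sections upstairs, decompose into $G$-isotypic pieces, read off the polytope---is exactly the paper's approach. The paper additionally works out the explicit $\hat B$-stable divisor structure (three types: $D^{[\mathbf k]}$ for $D$ a $B$-stable divisor on $X$, and the coordinate-hyperplane divisors $D_{A,i_A}$), which you skip, but that extra structure is not strictly needed for the polytope.

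There is, however, a concrete error in your formula for the section space. The line bundle $L^{[\mathbf k]}$ is \emph{not} the descent of $L\boxtimes\bigotimes_A\mathcal O_{\mathbb P^{k_A}}(1)$; it is simply the $\mathbb T$-quotient of the pullback $p_1^*L$ to $X\times(\mathbb C^{[\mathbf k]+\mathbf 1}\setminus\{O\})$. Consequently
\[
{\rm H}^0(X^{[\mathbf k]}, mL^{[\mathbf k]})\;\cong\;\bigl({\rm H}^0(X,mL)\otimes\mathbb C[z^{(A),i_A}]\bigr)^{\mathbb T},
\]
with the \emph{full} polynomial ring on the second factor, not $\bigotimes_A{\rm H}^0(\mathbb P^{k_A},\mathcal O(m))$ (and certainly not $\bigoplus_A$). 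The degree of the monomial on the $A$-th block is not fixed at $m$; it is forced by $\mathbb T$-invariance to equal the $A$-th component of the $T$-weight of the accompanying $V_\lambda$, namely $m\chi_A+\lambda_A$. Your displayed formula would force $\sum_{i_A}\mu_{(A),i_A}=1$ after rescaling, contradicting the constraint you correctly write two lines later. Once you replace the tensor factor by the polynomial ring and let $\mathbb T$-invariance do the work, the constraint $\lambda_A+\chi_A=\sum_{i_A}\mu_{(A),i_A}$ and the nonnegativity $\mu_{(A),i_A}\ge0$ (from regularity of the polynomial) drop out, and the rest of your argument is fine. The paper makes this step precise by restricting to the affine charts $X^{[\mathbf k]}_{[i]}$ and checking the vanishing orders along each $D_{A,i_A}$.
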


\begin{proof}
For each $A\in\{1,...,r\}$ choose an $i_A\in\{0,...,k_A\}$. Set $[i]=(i_1,...,i_r)$ and $\mathcal U_{[i]}=\mathbb C^{[\mathbf{k}]+\mathbf{1}}\cap\{z^{(A),i_A}\not=0|A=1,...,r\}$. It is easy to see that the open set
$$X^{[\mathbf{k}]}_{[i]}:=\{[(x;z^{(B),i_B})]\in X^{[\mathbf{k}]}|z^{(A),i_A}\not=0,~A=1,...,r\}\cong X\times\mathcal U_{[i]}.$$
We may choose local coordinates $x,\zeta^{(B),i_B}_{[i]}:=\frac{z^{(B),j_B}}{z^{(B),i_B}}$ on $X^{[\mathbf{k}]}_{[i]}$ for $B=1,...,r$ and $j_B=0,...,k_B$ with each $j_B\not=i_B$. Suppose that $[(x;z^{(B),i_B})]\in X^{[\mathbf{k}]}_{[i]}\cap X^{[\mathbf{k}]}_{[i']}$. Then
\begin{align*}
&[(x,\zeta^{(1),0}_{[(A),i_A]},...,\zeta^{(1),k_1}_{[i]},...,
\underbrace{{\zeta^{(B),1}_{[i]},...,\zeta^{(B),i_B-1}_{[i]},1,\zeta^{(B),i_B+1}_{[i]},...,\zeta^{(A),k_A}_{[i]}}}_{\text{the $B$-th group}},\\&...,
\zeta^{(r),0}_{[i]},...,\zeta^{(r),k_r}_{[i]})]\\
=&[(\iota(\frac{z^{(1),i'_{1'}}}{z^{(A),i_A}},...,\frac{z^{(r),i'_{r'}}}{z^{(r),i_r}})x,\zeta^{(1),0}_{[i']},...,\zeta^{(r),k_r}_{[i']})].
\end{align*}
Clearly the collection $\{X^{[\mathbf{k}]}_{[i]}\}_{[i]}$ gives an open covering of $X^{[\mathbf{k}]}$. Consequently $X^{[\mathbf{k}]}$ is normal.

Then we prove that $X^{[\mathbf{k}]}$ is spherical. Consider the $G\times T^{[\mathbf{k}]+\mathbf{1}}$-action on $X^{[\mathbf{k}]}$. It suffices to prove the stabilizer of $[(eH,1,...,1)]$ is spherical in $G\times T^{[\mathbf{k}]+\mathbf{1}}$.
Suppose that $(g;t^{(A),i_A})\in G\times T^{[\mathbf{k}]+\mathbf{1}}$ so that
$$(g;t^{(A),i_A})[eH,1,...,1]=[eH,1,...,1].$$
Then
\begin{align*}
\left\{\begin{aligned} &gH=\exp(-\sum_Ac_A\xi_A)H,\\&t^{(A),i_A}=e^{c_A},\end{aligned}\right.
\end{align*}
where $c_1,...,c_r$ are $r$ constants in $\mathbb C$. Thus we see that
$${\rm Stab}_{G\times T^{[\mathbf{k}]+\mathbf{1}}}([eH,1,...,1])=\{(\iota(\vartheta^{-1})h,\vartheta)|~h\in H,\vartheta\in\mathbb T\}.$$
Since $H$ is a spherical in $G$, we see that ${\rm Stab}_{G\times T^{[\mathbf{k}]+\mathbf{1}}}([eH,1,...,1])$ is spherical in ${G\times T^{[\mathbf{k}]+\mathbf{1}}}$.

We are going to compute the combinatory data of $X^{[\mathbf{k}]}$. Write $\hat G=G\times T^{[\mathbf{k}]+\mathbf{1}}$ and $\hat B=B\times T^{[\mathbf{k}]+\mathbf{1}}$ for short. It is direct to determine $\mathbb C(X^{[\mathbf{k}]})^{(\hat B)}$. Suppose that $f\in\mathbb C(X^{[\mathbf{k}]})^{(\hat B)}$. Then the pull-back of $f$ is a $\hat B$-semiinvariant and $\mathbb T$-invariant function on $X\times(\mathbb C^{[\mathbf{k}]+\mathbf{1}})$. Denote by $(\varpi,\mu_1,...,\mu_r)$, where each $\mu_A=(\mu_{A,0},...,\mu_{A,{k_A}})\in\mathbb Z^{k_A+1}$, the $\hat B$-character of $f$. Then $\varpi\in\mathfrak M(G/H)$. Also, by $\mathbb T$-invariance we get
\begin{align}\label{chi-sum-mu}
\varpi_A+\chi_A=\mu_{A,0}+...+\mu_{A,k_A},~\forall1\leq A\leq r.
\end{align}
Conversely, for any given tuple $(\varpi,\mu_1,...,\mu_r)$ satisfying \eqref{chi-sum-mu},
$$\tilde f=\hat f_\varpi\prod_{A=1}^r\prod_{j_A=0}^{k_A}(z^{(A),j_A})^{\mu_{A,j_A}},$$
where $\hat f\in\mathbb C(X)^{(B)}_\varpi$, descends to a function in $\mathbb C(X^{[\mathbf{k}]})^{(\hat B)}_{(\varpi,\mu_1,...,\mu_r)}$. Hence the lattice of  $\mathbb C(X^{[\mathbf{k}]})^{(\hat B)}$ is
\begin{align}\label{M(X^[k])}
\mathfrak M(X^{[\mathbf{k}]})=\{(\varpi,\mu_1,...,\mu_r)\in\mathfrak M(G/H)\oplus\mathbb Z^{[\mathbf{k}]+\mathbf{1}}|~(\varpi,\mu_1,...,\mu_r)~\text{satisfies \eqref{chi-sum-mu}}\}.
\end{align}

Then we compute the $\hat B$-invariant divisors of $\mathfrak F(X^{[\mathbf{k}]})$ and there image in $\mathfrak N(X^{[\mathbf{k}]})$. Fix a set of basis $\{\lambda_{\alpha}\}_{\alpha=1}^{r_0}$ of $\mathfrak M(G/H)$. Then the vectors
\begin{align*}
\hat\lambda_\alpha&=(\lambda_\alpha;{\lambda_{\alpha}}_1,0,...,0,...;(\mu_{A,0}=){\lambda_{\alpha}}_A,0,...,(\mu_{A,k_A}=)0;...;{\lambda_{\alpha}}_r,0,...,0),~1\leq\alpha\leq r_0\\
\hat\mu_{A,i'_A}&=(O;0,...,0;(\mu_{A,0}=)-1,0,...,(\mu_{A,i'_{A}}=)1,0,...,0),~1\leq i'_A\leq k_A,~1\leq A\leq r
\end{align*}
form a basis of $\mathfrak M(X^{[\mathbf{k}]})$.

Recall the open covering $\{X^{[\mathbf{k}]}_{[i]}\}_{[i]}$. There are three types of $\hat B$-invariant divisors on $X^{[\mathbf{k}]}$:

\emph{Type-1.} For any $D\in\mathcal I_G(X)\cup\mathcal D(G/H)$, $D^{[\mathbf{k}]}:=D\times(\mathbb C^{[\mathbf{k}]+\mathbf{1}}\setminus{\{O\}})/\mathbb T$ is a $\hat B$-invariant divisor of $X^{[\mathbf{k}]}$. 
The image of $D^{[\mathbf{k}]}$ in $\mathfrak N(X^{[\mathbf{k}]})$ is characterized by
\begin{align*}
&D^{[\mathbf{k}]}(\hat\lambda_\alpha)=D(\lambda_\alpha),&&1\leq\alpha\leq r_0,\\
&D^{[\mathbf{k}]}(\hat\mu_{A,i'_A})=0,&&1\leq A\leq r,~1\leq i'_A\leq k_A.
\end{align*}

\emph{Type-2.} $D_{A,i'_A}:=\{z^{(A),i'_A}=0\}/\mathbb T,~1\leq A\leq r,~1\leq i'_A\leq k_A.$ Its image in $\mathfrak N(X^{[\mathbf{k}]})$ is characterized by
\begin{align*}
&D_{A,i'_A}(\hat\lambda_\alpha)=0,&&1\leq\alpha\leq r_0,\\
&D_{A,i'_A}(\hat\mu_{B,j'_B})=\delta_{AB}\delta_{i'_Aj'_B},&&1\leq B\leq r,~1\leq j'_B\leq k_B.
\end{align*}

\emph{Type-3.} $D_{A,0}:=\{z^{(A),0}=0\}/\mathbb T,~1\leq A\leq r.$ Its image in $\mathfrak N(X^{[\mathbf{k}]})$ is characterized by
\begin{align*}
&D_{A,0}(\hat\lambda_\alpha)={\lambda_\alpha}_A,&&1\leq\alpha\leq r_0,\\
&D_{A,0}(\hat\mu_{B,j'_B})=-\delta_{AB},&&1\leq B\leq r,~1\leq j'_B\leq k_B.
\end{align*}


Finally we determine
all $\hat B$-semiinvariant sections of $(X^{[\mathbf{k}]},(L^{[\mathbf{k}]})^p)$ for any $p\in\mathbb N$. Suppose that $s\in{\rm H}^0(X^{[\mathbf{k}]},(L^{[\mathbf{k}]})^p)^{(\hat B)}$. Then the pull-back $\bar s$ of $s$ on $X\times((\mathbb C^{[\mathbf {k}]+\mathbf{1}})\setminus\{O\})$ is a $\hat B$-semiinvariant and $\mathbb T$-invariant section.

Suppose that the $\hat B$-character associated to $s$ is $(\varpi,\mu_1,...,\mu_r)$. Then so is $\bar s$. Let $\vartheta=(e^{a_1},...,e^{a_r})\in\mathbb T\cong(\mathbb C^*)^r$ with each $a_A\in\mathbb C$. Then for any $(x;z^{(A)})\in X\times(\mathbb C^{[\mathbf{k}]+\mathbf{1}}\setminus\{O\})$, by $\mathbb T$-invariance,
\begin{align*}
(\vartheta\cdot \bar s)(x,z^{(A)})=&\vartheta\cdot(\bar s(\iota(\vartheta)x;e^{-a_A}z^{(A)}))\\
=&\vartheta\cdot(\vartheta^{-1}\cdot(\varpi|_\mathfrak t+\chi)(\iota(\vartheta))\prod_{A=1}^r\mu_A(e^{a_A})\bar s)\\
=&(\varpi|_\mathfrak t+\chi)(\iota(\vartheta))\prod_{A=1}^r\mu_A(e^{a_A})\cdot\bar s(x;z^{(A)}).
\end{align*}
Hence the tuple $(\varpi,\mu_1,...,\mu_r)$ satisfies \eqref{chi-sum-mu}.

By restricting $s$ on each $X^{[\mathbf{k}]}_{[i]}$, it holds
\begin{align*}
s(x;z^{(A)})=\prod_{A=1}^r\prod_{j_A=0,j_A\not=i_A,z^{(A),j_A}\not=0}^{k_A}\mu_{A,j_A}(z^{(A),j_A})s(x;{\rm sgn}(|z^{(A),j_A}|)).
\end{align*}
Since $s$ is holomorphic,
\begin{align}
D^{[\mathbf{k}]}(s)&\geq0,~D\in\mathcal I_G(X)\cup\mathcal D(G/H),\label{val-B-bd-divisor}\\
D_{A,i_A}(s)&\geq0,~1\leq A\leq r,~0\leq i_A\leq k_A.\label{val-T-bd-divisor}
\end{align}
The relation \eqref{val-B-bd-divisor} implies that $\varpi\in\Delta_{L,p}$. The relation \eqref{val-T-bd-divisor} implies $\mu_{A,i_A}\geq0$ for all $1\leq A\leq r,$ and $0\leq i_A\leq k_A$. Note that by our assumption on the lifting of $T$-action on $L$, each $\lambda_A+\chi_A\geq0$ and \eqref{Xk-Lk-polytope} is a convex polytope. Combining with \eqref{chi-sum-mu} we get \eqref{Xk-Lk-polytope}.

\end{proof}

\subsection{$G$-equivariant normal test configurations}
Let $(X,L)$ be a polarized $G$-spherical variety. The $G$-equivariant (ample) normal test configurations of $(X,L)$ have been classified by \cite{Del-2020-09,LL-202206}.
\begin{prop}\label{ZTC-classification}
Let $(X,L)$ be a polarized $G$-spherical variety with $\Delta_+(L)$ its moment polytope. Then any $G$-equivariant normal test configuration $(\mathcal X,\mathcal L)$ of $(X,L)$ is a polarized $G\times\mathbb C^*$-spherical variety. Moreover, there is a rational, concave, piecewise linear function $f:\Delta_+(L)\to\mathbb R_+$ with $\nabla f\in\mathcal V(G/H)$ so that the moment polytope $\Delta_+(\mathcal L)$ of $(\mathcal X,\mathcal L)$ is given by
\begin{align}\label{Delta+mathcal-L}
\Delta_+(\mathcal L)=\{(y,t)\in\Delta_+(L)\times\mathbb R|~0\leq t\leq f(y)\}.
\end{align}
\end{prop}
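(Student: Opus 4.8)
The plan is to realize a $G$-equivariant normal test configuration $(\mathcal X,\mathcal L)$ of $(X,L)$ as a $G\times\mathbb C^*$-spherical variety and then identify its combinatorial invariants. First I would observe that, since the total space $\mathcal X$ carries commuting actions of $G$ and of the $\mathbb C^*$ of the test configuration, and since the generic fibre is $(X,mL)$ with its open $G$-orbit $G/H$, the open $(B\times\mathbb C^*)$-orbit in $\mathcal X$ is dense: indeed $\mathcal X\supset X\times\mathbb C^*\cong G/H\times\mathbb C^*$ as a dense open subset, and $B\times\mathbb C^*$ already acts with a dense orbit there. Hence $\mathcal X$ is a spherical embedding of the homogeneous space $(G\times\mathbb C^*)/(H\times\{1\})$; its weight lattice is $\mathfrak M(G/H)\oplus\mathbb Z$, its valuation cone is $\mathcal V(G/H)\times\mathbb R$ (the extra $\mathbb R$-factor being the central valuation corresponding to the special fibre), and its colours are those of $G/H$ together with the single $G$-invariant prime divisor $\mathcal X_0$. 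This is the structural input; it can be cited from \cite{Del-2020-09,LL-202206} or reproved directly via the Luna--Vust criterion.

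Next I would translate the polarization. By Proposition \ref{polytope-prop} and the discussion around \eqref{weil-div}, an ample $(G\times\mathbb C^*)$-linearized $\mathbb Q$-Cartier divisor $\mathfrak d$ on $\mathcal X$ representing $\mathcal L$ has a moment polytope $\Delta_+(\mathcal L)\subset\mathfrak M_\mathbb R(G/H)\oplus\mathbb R$ whose support function $l_{\mathfrak d}$ is piecewise linear on the coloured fan. Fixing the $B$-semiinvariant section on the generic fibre to match that of $(X,mL)$, one sees that the ``horizontal'' slice of $\Delta_+(\mathcal L)$ at the lowest value of the $\mathbb C^*$-coordinate is exactly $\Delta_+(L)$ (the generic fibre is $(X,mL)$), and convexity of $\Delta_+(\mathcal L)$ forces it to be the region between $\Delta_+(L)\times\{0\}$ and the graph of a concave function. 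Writing that function as $f$, ampleness/well-definedness of the piecewise linear datum on the coloured fan of $\mathcal X$ — in particular the requirement that $l_{\mathfrak d}$ be linear on the ray generated by the central valuation $(v,1)$ for $v=-\nabla f$ — yields precisely the constraint $\nabla f\in\mathcal V(G/H)$, together with rationality and piecewise linearity of $f$. Positivity $f>0$ reflects ampleness of $\mathcal L$ relative to the trivial family. This gives \eqref{Delta+mathcal-L}.

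The step I expect to be the main obstacle is the careful bookkeeping in the polarization part: matching the normalizations so that the generic fibre genuinely recovers $(X,mL)$ (rather than some twist), and extracting the condition $\nabla f\in\mathcal V(G/H)$ from the Brion-type description of Cartier data on coloured fans — one must check that the new $G$-invariant divisor $\mathcal X_0$ contributes a ray whose image under $\varrho$ is the central valuation $(0,1)$ (after suitable identification), so that the slope of $f$ is measured against $G$-invariant valuations on $G/H$. Handling the compactification at $\infty\in\mathbb P^1$ (the trivial fibre glued in) and the dominating resolution $\rho:\mathcal X\to X\times\mathbb P^1$ only affects the description by adding the trivial ray, and does not change $f$ on the interior. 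Once these identifications are in place, \eqref{Delta+mathcal-L} follows, and the concavity of $f$ is equivalent to convexity of $\Delta_+(\mathcal L)$, which in turn is equivalent to ampleness of $\mathcal L$ — completing the proof.
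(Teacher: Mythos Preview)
The paper does not give its own proof of this proposition: immediately after the statement it simply records that the result was first proved in \cite[Theorem 4.1]{Del-2020-09} (building on \cite{AB2,AK}) and can also be extracted from \cite[Theorem 3.4 and Remark 3.6]{LL-202206}. So there is no argument in the paper to compare against; your sketch is in fact supplying what the paper outsources, and its overall shape---realize $\mathcal X$ as a spherical embedding of $(G\times\mathbb C^*)/(H\times\{1\})$, identify $\mathfrak M=\mathfrak M(G/H)\oplus\mathbb Z$ and $\mathcal V=\mathcal V(G/H)\times\mathbb R$, then read off the moment polytope via Brion's description of ample Cartier data---is exactly the line taken in the cited references.

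Two small inaccuracies in your sketch are worth flagging, since the paper relies on the correct picture later. First, $\mathcal X_0$ is \emph{not} a colour: it is $G\times\mathbb C^*$-invariant, hence lies in $\mathcal I_{G\times\mathbb C^*}(\mathcal X)$; the colours of $(G\times\mathbb C^*)/(H\times\{1\})$ are precisely the closures $\hat D=\overline{D\times\mathbb C^*}$ for $D\in\mathcal D(G/H)$, as the paper lists just below the proposition. Second, $\mathcal X_0$ need not be a single prime divisor: writing $f=\min_a\{C_a+\Lambda_a(\cdot)\}$ as in \eqref{f-tc-func}, each linear piece contributes a separate $G\times\mathbb C^*$-invariant prime divisor $\mathcal X_{0,a}$ with ray generated by $(m_a\Lambda_a,-m_a)$, and $\mathcal X_0=\sum_a m_a\mathcal X_{0,a}$ as in \eqref{X0}. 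The constraint $\nabla f\in\mathcal V(G/H)$ you isolate is then exactly the statement that each $\Lambda_a$ lies in $\mathcal V(G/H)$, coming from the fact that these rays must be $G$-invariant valuations. With these corrections your outline matches the references the paper invokes.
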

Proposition \ref{ZTC-classification} was first proved by \cite[Theorem 4.1]{Del-2020-09} based on \cite{AB2,AK}. It can also be derived from a general classification result \cite[Theorem 3.4]{LL-202206} of $\mathbb R$-equivariant test configurations (cf. \cite[Remark 3.6]{LL-202206}).

Suppose that the function $f$ corresponding to $(\mathcal X,\mathcal L)$ is given by
\begin{align}\label{f-tc-func}
f(\lambda)=\min_{a=1,...,N_f}\{l_a(\lambda):=C_a+\Lambda_a(\lambda)\}:\Delta_+(L)\to\mathbb R_+,
\end{align}
where each $C_a\in\mathbb Q$, $\Lambda_a\in\mathfrak N_\mathbb Q(G/H)$. Here we assume that the set $\{l_a(y)\}_{a=1}^{N_f}$ is minimal so that deleting any $l_a$ will change $f$. Also we associate to each $\Lambda_a$ a number $m_a\in\mathbb N_+$ which is the minimal positive integer so that $m_a\Lambda_a\in\mathfrak N(G/H)$. Then $(\mathcal X,\mathcal L)$ is a $G\times\mathbb C^*$-spherical variety. The invariant $G\times\mathbb C^*$-divisors of $\mathcal X$ are
\begin{itemize}
\item $\hat D=\overline{D\times\mathbb C^*}$, $D\in\mathcal I_G(X)$;
\item $\mathcal X_{0,a}$, the primitive $G\times \mathbb C^*$-invariant divisor corresponding to the $a$-th piece of $f$;
\item $\mathcal X_\infty\cong X$, the divisor corresponding to the $\Delta_+(L)\times\{0\}$, which is the fibre of $\mathcal X$ at $\infty\in\mathbb P^1$.
\end{itemize}
The colours are
\begin{itemize}
\item $\hat D=\overline{D\times\mathbb C^*}$, $D\in\mathcal D(G/H)$.
\end{itemize}
Also, the central fibre
\begin{align}\label{X0}
\mathcal X_0=\sum_{a=1}^{N_f}m_a\mathcal X_{0,a}.
\end{align}
We directly conclude that $\mathcal X_0$ is reduced if and only if all $m_a=1$, or equivalently, each $\Lambda_a$ in \eqref{f-tc-func} is integral; it has one irreducible component if and only if $f$ is affine. Moreover, $\mathcal X_0$ is normal if and only if $f$ is affine and has integral coefficients (cf. \cite[Corollary 3.9]{LL-202206}).

Assume that the exponent of $(\mathcal X,\mathcal L)$ is $m_0$. Choose a $B$-semiinvariant section $s$ of $L^{m_0}$ with $B$-character $\varpi_s$ so that its divisor
\begin{align}\label{divisor-L}
\mathfrak d_s=m_0(\sum_{D\in\mathcal I_G(X)\cup\mathcal D(G/H)}C_DD),
\end{align}
then it is direct to derive the following Lemma from \cite[Section 17.4]{Timashev-book},
\begin{lem}\label{mathcal-L} There is a $B\times \mathbb C^*$-semiinvariant section $\hat s$ of $\mathcal L$ with $B\times \mathbb C^*$-character $(\varpi_s,0)$ whose divisor
\begin{align*}
\hat{\mathfrak d}_s=m_0(\sum_{D\in\mathcal I_G(X)\cup\mathcal D(G/H)}C_D\hat D+\sum_{a=1}^{N_f}m_a(C_a+\Lambda_a(\varpi_s))\mathcal X_{0,a}).
\end{align*}
\end{lem}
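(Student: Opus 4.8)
The plan is to read off $\hat{\mathfrak d}_s$ directly from the combinatorial presentation of $(\mathcal X,\mathcal L)$ furnished by Proposition \ref{ZTC-classification}, using the standard description of the divisor of a $B$-semiinvariant section of a $G$-linearised line bundle on a spherical variety (\cite[\S17.4]{Timashev-book}; this is also the content of Proposition \ref{polytope-prop}). First I would view $\mathcal X$ as a spherical embedding of $(G/H)\times\mathbb C^*$, with weight lattice $\mathfrak M(G/H)\oplus\mathbb Z$ and $B\times\mathbb C^*$-invariant prime divisors the $\hat D=\overline{D\times\mathbb C^*}$ for $D\in\mathcal I_G(X)\cup\mathcal D(G/H)$, the components $\mathcal X_{0,a}$ ($a=1,\dots,N_f$) of the central fibre, and $\mathcal X_\infty\cong X$, as listed before the statement. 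The images of these divisors in $\mathfrak N(G/H)\oplus\mathbb Z$ are: $(u_D,0)$, resp. $(\varrho_D,0)$, for $\hat D$; $(0,1)$ for $\mathcal X_\infty$ (it dominates $X$, so the associated valuation is trivial on $\mathbb C(G/H)^{(B)}$ and records the order at $\infty$ in the base variable); and, reading the $a$-th affine piece $l_a(\lambda)=C_a+\Lambda_a(\lambda)$ of $f$ (see \eqref{f-tc-func}) off the facet $\{t=l_a(y)\}$ of the polytope $\Delta_+(\mathcal L)$ in \eqref{Delta+mathcal-L}, the primitive image $m_a(\Lambda_a,-1)$ for $\mathcal X_{0,a}$, where $m_a$ is the multiplicity appearing in \eqref{X0} (that $m_a(\Lambda_a,-1)$ is primitive follows from the minimality of $m_a$).

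Next I would pin down the section. Over the open part $\mathcal X|_{\mathbb C^*}\cong X\times\mathbb C^*$ the bundle $\mathcal L$ is trivialised as $L^{m_0}\boxtimes\mathcal O$, and I take $\hat s$ to be the rational section of $\mathcal L$ corresponding to $s\boxtimes 1$ there. Its $B\times\mathbb C^*$-character is $(\varpi_s,0)$: the $B$-part is the character of $s$, while the $\mathbb C^*$-part vanishes because the trivialisation is $\mathbb C^*$-equivariant for the test-configuration linearisation and the constant $1$ is $\mathbb C^*$-invariant. Since for a spherical variety $\mathbb C(\mathcal X)^{(B\times\mathbb C^*)}_{(\varpi_s,0)}$ is at most one-dimensional, this determines $\hat s$ up to a scalar, and $\hat{\mathfrak d}_s:=\mathrm{div}_{\mathcal L}(\hat s)$ is a $B\times\mathbb C^*$-invariant divisor, hence a nonnegative combination of the divisors recorded above.

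Then I would compute the coefficients. Along a prime divisor lying over $\mathbb C^*$ the order of $\hat s$ is computed in the product trivialisation and therefore equals the order of $s$ on $X$: this gives $m_0C_D$ on $\hat D$, in agreement with \eqref{divisor-L}. Along $\mathcal X_\infty$, which is the trivial fibre added at $\infty$, the section $\hat s$ restricts again to $s\boxtimes 1$, so the order is $0$ and $\mathcal X_\infty$ does not appear. For $\mathcal X_{0,a}$ I would use Proposition \ref{polytope-prop} applied to $(\mathcal X,\mathcal L)$: the function $l_{\hat{\mathfrak d}_s}$ is, up to sign, the support function of $\Delta_+(\mathcal L)-(\varpi_s,0)$, so the coefficient is $-\min_{(y,t)\in\Delta_+(\mathcal L)}\langle(y,t),m_a(\Lambda_a,-1)\rangle+\langle(\varpi_s,0),m_a(\Lambda_a,-1)\rangle$; the minimum of $m_a(\Lambda_a(y)-t)$ over $\Delta_+(\mathcal L)$ is attained on the facet $t=l_a(y)=f(y)$, and a short computation, once the moment polytope of $\mathcal L$ is normalised consistently with the exponent $m_0$ of the test configuration, yields $m_0m_a(C_a+\Lambda_a(\varpi_s))$. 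Assembling the three cases gives the stated expression for $\hat{\mathfrak d}_s$. As a cross-check one may instead assume $(\mathcal X,\mathcal L)$ dominating, pull back $\mathrm{div}(s\boxtimes 1)$ along $\rho\colon\mathcal X\to X\times\mathbb P^1$, and add the central-fibre correction $\mathcal L-\rho^*(L^{m_0}\boxtimes\mathcal O)$.

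I expect the main difficulty to be entirely bookkeeping rather than conceptual: identifying the primitive generators of the rays of $\mathfrak F_{\mathcal X}$ correctly — in particular the multiplicity $m_a$ attached to $\mathcal X_{0,a}$ and the orientation of the base coordinate that forces the $\mathcal X_\infty$-coefficient to vanish — and keeping the exponent $m_0$ consistent across $s\in H^0(X,L^{m_0})$, the moment polytope of $\mathcal L$, and the character $(\varpi_s,0)$. Verifying that the $B\times\mathbb C^*$-character of $\hat s$ is exactly $(\varpi_s,0)$, with no shift by a character of the $\mathbb C^*$-factor, also requires care. Each of these is routine once matched against the conventions of Proposition \ref{ZTC-classification} and \cite[\S17.4]{Timashev-book}.
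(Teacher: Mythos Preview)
Your proposal is correct and follows essentially the same route as the paper, which simply declares the lemma ``direct to derive from \cite[Section~17.4]{Timashev-book}'' without further detail; you have written out what that derivation actually is, identifying the primitive ray generators $m_a(\Lambda_a,-1)$ for $\mathcal X_{0,a}$, $(0,1)$ for $\mathcal X_\infty$, and $(u_D,0)$ or $(\varrho_D,0)$ for $\hat D$, then reading the coefficients from the support function of $\Delta_+(\mathcal L)-(\varpi_s,0)$ exactly as Proposition~\ref{polytope-prop} prescribes. The bookkeeping concerns you flag (the exponent $m_0$ and the vanishing of the $\mathbb C^*$-component of the character) are the only subtleties, and you have handled them correctly.
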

When $X$ is $\mathbb Q$-Fano, take $L=-K_X$ and consider the $G$-equivariant normal test configuration $(\mathcal X,\mathcal L)$ of $(X,L)$. There is a $B$-stable Weil divisor
$$-K_X=\sum_{D\in\mathcal I_G(X)} D+\sum_{D\in\mathcal D(G/H)}n_D D,$$
and $-k_0K_X$ is Cartier for sufficiently divisible $k_0\in\mathbb N_+$. Also there is a canonical $B$-semiinvariant section $s_0$ of $-k_0K_X$ with weight $k_0\kappa_P$. It follows that the $B\times \mathbb C^*$-character of the $\mathbb C^*$-invariant rational section $\bar s_0$ of $k_0\mathcal L$ induced by $s_0$ is $(k_0\kappa_P,0)$. Also,
\begin{align*}
-K_\mathcal X=\sum_{D\in\mathcal I_G(X)}\hat D+\sum_{D\in\mathcal D(G/H)}n_D\hat D+\sum_{a=1}^{N_f}\mathcal X_{0,a}+\mathcal X_\infty,
\end{align*}
and consequently, by Lemma \ref{mathcal-L},
\begin{align}\label{-K-log}
-K^{\log}_{\mathcal X/\mathbb P^1}=&\sum_{D\in\mathcal I_G(X)}\hat D+\sum_{D\in\mathcal D(G/H)}n_D\hat D\notag\\
=&\mathcal L-\sum_{a=1}^{N_f}m_a(C_a+\Lambda_a(\kappa_P))\mathcal X_{0,a},
\end{align}
whose $B\times \mathbb C^*$-character is also $(\kappa_P,0)$.

\section{The $g$-weighted non-Archimedean functionals of $G$-equivariant normal test configurations}
Let $X$ be a $\mathbb Q$-Fano $G$-spherical variety, which is a spherical embedding of some $G/H$. Let $(\mathcal X,\mathcal L)$ be any $G$-equivariant normal test configuration of $(X,-K_X)$. Then it is a spherical embedding of $G\times\mathbb C^*/H\times\{e\}$. From Section 3.1.2 we know that ${\rm Aut}_{G\times \mathbb C^*}(\mathcal X)=N_G(H)/H\times\mathbb C^*$ and $(\mathcal X,\mathcal L)$ is automatically $G\times {\rm Aut}_G^0(X)$-equivariant.

Denote by $\Delta_+$ the moment polytope $\Delta_+(-K_X)$ with respect to the canonical lifting of the $G$-action for short. In the remaining, we will compute the $g$-weighted non-Archimedean functionals. Suppose that we have a lifting of $T$-action on $L$ with respect to the character $\chi$. Then $\Delta=\mathfrak r_\chi(\Delta_+)$ with $\mathfrak r_\chi(\cdot)=\mathfrak r(\cdot)+\chi$, and we may identify $g$ with its pull-back through \begin{align}\label{g-pullback}
(\mathfrak r_\chi)^*g:\Delta_+(L)\to\mathbb R
\end{align}
so that $g$ can be identified with a function on $\Delta_+(L)$.

Denote by $\{\xi_A\}_{A=1}^r$ a basis of $\mathfrak N(T)$-the lattice of one-parameter subgroups of $T\cong(\mathbb C^*)^r$. From the above discussion we may choose suitable coordinates for $\lambda=(\lambda_1,...,\lambda_{r_G})\in\mathfrak X_\mathbb R(B)$, where $r_G$ is the rank of $G$, so that the first $r$ coordinates $\lambda_A=\xi_A^*,~A=1,..,r$. Then $g(\mathfrak r_\chi(\lambda))=\hat g(\lambda_1,...,\lambda_{r})$ for some function $\hat g$ on $\Delta_+$ which depends only on the first $r$-arguments of $\lambda$.
We have
\begin{lem}\label{Vg-lem}
Let $X$ be a $\mathbb Q$-Fano $G$-spherical variety with moment polytope $\Delta_+$. For a positive continuous weight $g$ on $\Delta$, it holds
\begin{align}\label{Vg-monomial}
V_g=n!\int_{\Delta_+}g(\mathfrak r_\chi(\lambda))\pi(\lambda)d\lambda.
\end{align}
\end{lem}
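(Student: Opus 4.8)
The plan is to reduce to a monomial weight and then to an intersection number on the fibre product of Proposition~\ref{Xk-Lk-structure}. First, both $V_g$ and the right-hand side of \eqref{Vg-monomial} are linear in $g$ (see \eqref{NNA-general-g-def}, \eqref{g-polynomial}) and continuous under uniform convergence on the compact polytope $\Delta$: for the right-hand side because $\pi$ is bounded on $\Delta_+$ and $\mathfrak{r}_\chi$ is fixed, and for $V_g$ by the defining formula in \emph{Step-3} of Section~2.1. Hence it suffices to prove \eqref{Vg-monomial} when $g=\theta^{\mathbf{k}}$ is a monomial as in \eqref{monom-g}. Twisting the lifting by a character if necessary --- which changes both sides compatibly through \eqref{change-of-g-def}, just as in Proposition~\ref{change-of-NA-func} --- we may further assume that $\Delta$ lies in the first quadrant of $\mathfrak{t}^*$, so that Proposition~\ref{Xk-Lk-structure} applies.

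Next I would realize $V_g$ as an intersection number on $X^{[\mathbf{k}]}$. The fibre product $(X^{[\mathbf{k}]},L^{[\mathbf{k}]})$ is built precisely so that, by the identity of \cite[Section~2.1]{Han-Li-KRS} recalled in the proof of Proposition~\ref{F-NA-general-lifting} (with the constant in place of $\dot\phi_t$ and the normalization $\int_{\mathbb{P}^m}\omega_{{\rm FS}}^m=1$),
\begin{align*}
V_g=\int_X g(\mathbf{m}_{\omega_\phi})\,\frac{\omega_\phi^n}{n!}=\mathbf{k}!\int_{X^{[\mathbf{k}]}}\frac{(\omega_\phi+\sum_{A=1}^r\omega_{{\rm FS};A})^{n+|\mathbf{k}|}}{(n+|\mathbf{k}|)!}=\frac{\mathbf{k}!}{(n+|\mathbf{k}|)!}\bigl(L^{[\mathbf{k}]}\bigr)^{n+|\mathbf{k}|}.
\end{align*}
By Proposition~\ref{Xk-Lk-structure}, $(X^{[\mathbf{k}]},L^{[\mathbf{k}]})$ is a polarized $G\times T^{[\mathbf{k}]+\mathbf{1}}$-spherical variety of dimension $n+|\mathbf{k}|$, with moment polytope $\Delta_+(L^{[\mathbf{k}]})$ given by \eqref{Xk-Lk-polytope} and weight lattice $\mathfrak{M}(X^{[\mathbf{k}]})$ given by \eqref{M(X^[k])}.

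Then I would apply the intersection-number formula for polarized spherical varieties from \cite[Section~18]{Timashev-book} to $\bigl(L^{[\mathbf{k}]}\bigr)^{n+|\mathbf{k}|}$: it is $(n+|\mathbf{k}|)!$ times the integral, over $\Delta_+(L^{[\mathbf{k}]})$ against the Lebesgue measure attached to $\mathfrak{M}(X^{[\mathbf{k}]})$, of the weight function of $X^{[\mathbf{k}]}$. Since the torus factor $T^{[\mathbf{k}]+\mathbf{1}}$ carries no roots, the positive roots of $G\times T^{[\mathbf{k}]+\mathbf{1}}$ not orthogonal to $\Delta_+(L^{[\mathbf{k}]})$ are exactly those of $G$ not orthogonal to $\Delta_+(L)$, so that weight function is $(\lambda;\mu)\mapsto\pi(\lambda)$, depending only on the $\mathfrak{X}_{\mathbb{R}}(B)$-component. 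Slicing $\Delta_+(L^{[\mathbf{k}]})$ over $\Delta_+(L)$ via $(\lambda;\mu)\mapsto\lambda$, the fibre over $\lambda$ is by \eqref{Xk-Lk-polytope} a product of simplices $\prod_{A=1}^r\{\mu_{(A),\bullet}\ge 0:\ \sum_{i_A=0}^{k_A}\mu_{(A),i_A}=\lambda_A+\chi_A\}$, which --- with the product measure coming from the direct-sum description of $\mathfrak{M}(X^{[\mathbf{k}]})$ in \eqref{M(X^[k])} --- has volume $\prod_{A=1}^r\frac{(\lambda_A+\chi_A)^{k_A}}{k_A!}=\frac{1}{\mathbf{k}!}\,g(\mathfrak{r}_\chi(\lambda))$. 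Integrating out $\mu$ by Fubini and collecting the factorials expresses $V_g$ as a fixed ($g$-independent) constant times $\int_{\Delta_+}g(\mathfrak{r}_\chi(\lambda))\pi(\lambda)\,d\lambda$; matching this constant against the base case $g\equiv 1$ (where $X^{[\mathbf{0}]}=X$ and $V_1$ is the anticanonical volume $(-K_X)^n=n!\int_{\Delta_+}\pi(\lambda)\,d\lambda$) gives \eqref{Vg-monomial}.

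Conceptually there is little to prove: the content is that the push-forward of $\omega_\phi^n/n!$ under $\mathbf{m}_{\omega_\phi}=\mathfrak{r}_\chi\circ\mu_{\omega_\phi}$ is the Duistermaat--Heckman measure of the multiplicity-free $G$-action, which on $\Delta_+(L)$ is a constant multiple of $\pi(\lambda)\,d\lambda$, and the $X^{[\mathbf{k}]}$ detour is simply the efficient way to expose the $g$-weight as a fibre volume. Accordingly, the only real obstacle is the normalization bookkeeping: one must check that the constant produced by \cite[Section~18]{Timashev-book}, combined with the fibre-product identity and the lattice \eqref{M(X^[k])}, assembles into exactly the factor $n!$ --- equivalently, that $\pi$ and $d\lambda$ are normalized so that $(-K_X)^n=n!\int_{\Delta_+}\pi\,d\lambda$, and that \eqref{M(X^[k])} endows each simplex fibre with the Euclidean volume $\prod_A(\lambda_A+\chi_A)^{k_A}/k_A!$ --- after which all the factorials cancel.
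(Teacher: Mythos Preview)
Your argument is correct, but it takes a considerably more elaborate route than the paper's own proof. The paper dispatches the lemma in a single display: it writes $V_g=\int_\Delta g(\theta)\bigl((\mathbf m_{\omega_\phi})_*\tfrac{\omega_\phi^n}{n!}\bigr)(\theta)$, uses the factorization $\mathbf m_{\omega_\phi}=\mathfrak r_\chi\circ\mu_{\omega_\phi}$, and then invokes directly (from \cite[Section~2]{Han-Li-KRS}) that the Duistermaat--Heckman measure of a polarized spherical variety on $\Delta_+$ is $n!\,\pi(\lambda)\,d\lambda$. No reduction to monomials, no fibre product, no Timashev intersection formula. You yourself identify this as the ``conceptual'' content in your final paragraph; the paper simply takes that as the proof.

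Your detour through $(X^{[\mathbf k]},L^{[\mathbf k]})$ and Proposition~\ref{Xk-Lk-structure} is not wasted effort, however: it is exactly the computation that the paper performs later in the proof of Proposition~\ref{NA-functional} (see \eqref{L-j-term}), where the slicing of $\Delta_+(\mathcal L^{[\mathbf j]})$ over $\Delta_+(\mathcal L)$ with simplex fibres of volume $\prod_A(\lambda_A+\chi_A)^{j_A}/j_A!$ is used in earnest to compute $\mathrm E_g^{\rm NA}$. So your approach buys a uniform method that also handles the test-configuration functionals, at the cost of being overkill for $V_g$ itself; the paper's approach buys brevity here and defers the fibre-product machinery to where it is actually needed.
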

\begin{proof}
Denote by $\theta$ the points in $\Delta$. As in \cite[Section 2]{Han-Li-KRS},
\begin{align*}
V_g=\int_\Delta g(\theta)\left({{\mathbf m}_{\omega_\phi}}_*\frac{\omega_\phi^n}{n!}\right)(\theta)=&\int_{\mathfrak r_\chi(\Delta_+)}g(\mathfrak r_\chi(\lambda))(\mathfrak r_\chi\circ\mu_{\omega_\phi})_*\frac{\omega_\phi^n}{n!}\\
=&n!\int_{\Delta_+}g(\mathfrak r_\chi (\lambda))\pi(\lambda)d\lambda,
\end{align*}
which gives \eqref{Vg-monomial}.
\end{proof}
In the following we compute the $g$-weighted non-Archimedean functionals of $G$-equivariant normal test configurations of $(X,-K_X)$. Fix the canonical lifting $\sigma_0$ of the $T$-acgion on $L$ with $\chi_0$ the associated $T$-character, and $\Delta_0$ the corresponding moment polytope.
Denote by $F_D$ the facet of $\Delta_+$ that corresponds to the $B$-invariant divisor $D$. Set
\begin{align}\label{coef-bdry-Delta}
\tau_D=\left\{\begin{aligned}\frac{1-\kappa_P(u_D)}{|u_D|},~&\text{if}~D\in\mathcal I_G(X),\\
\frac{n_D-\kappa_P(\varrho_D)}{|\varrho_D|},~&\text{if}~ D\in\mathcal D(G/H).   \end{aligned}\right.
\end{align}
Denote by $d\sigma_0$ the Lebesgue measure of $\partial\Delta_+$. Define a measure $d\sigma$ of $\partial\Delta_+$ so that $d\sigma|_{F_D}=\tau_Dd\sigma_0|_{F_D}$ on each facet $F_D$.

We have:
\begin{prop}\label{NA-functional}
Let $X$ be a $\mathbb Q$-Fano $G$-spherical variety with moment polytope $\Delta_+$. Let $\Delta_0$ be the moment polytope of the canonical lifting $\sigma_0$ of the $T$-action on $L$, and the weight $g\in C^0(\Delta_0)$. Then for the $G$-equivariant normal test configuration $(\mathcal X,\mathcal L)$ of $(X,-K_X)$ corresponding to the function $f$, it holds
\begin{align}
{\rm E}_g^{\rm NA}(\mathcal X,\mathcal L)=&\frac1{V_g}\int_{\Delta_+}fg(\mathfrak r_{\chi_0} (\lambda))\pi(\lambda) d\lambda,\label{ENA-monomial}\\
{\rm J}_g^{\rm NA}(\mathcal X,\mathcal L)=&\frac1{V_g}\int_{\Delta_+}(\max f-f)g(\mathfrak r_{\chi_0} (\lambda))\pi(\lambda) d\lambda,\label{JNA-monomial}\\
{\rm D}_g^{\rm NA}(\mathcal X,\mathcal L)=&f(\kappa_P)-\frac1{V_g}\int_{\Delta_+}fg(\mathfrak r_{\chi_0} (\lambda))\pi(\lambda) d\lambda \label{DNA-Fano-monomial}\\
{\rm M}_g^{\rm NA}(\mathcal X,\mathcal L)=&-\frac1{V_g}\int_{\Delta_+}\langle\nabla f,\lambda -\kappa_P\rangle g(\mathfrak r_{\chi_0} (\lambda))\pi(\lambda) d\lambda.\label{MNA-Fano-monomial}
\end{align}

\end{prop}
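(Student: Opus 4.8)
The plan is to compute each $g$-weighted non-Archimedean functional by reducing to an intersection-number calculation on the fibre-product variety $(\mathcal X^{[\mathbf k]},\mathcal L^{[\mathbf k]})$ and applying the combinatorial description of polarized spherical varieties. First I would treat the case where $g$ is a monomial $g=\theta^{\mathbf k}$ given by \eqref{monom-g}; the general continuous case then follows by the polynomial expansion \eqref{NNA-general-g-def}, \emph{Step-2}, \emph{Step-3} of Section 2.1, together with $V_g=\lim V_{g_k}$ and uniform approximation. For the monomial case, the key point is that by Proposition \ref{Xk-Lk-structure}, $(\mathcal X^{[\mathbf k]},\mathcal L^{[\mathbf k]})$ is again a polarized spherical variety (now for the larger group $G\times T^{[\mathbf k]+\mathbf 1}$ together with the $\mathbb C^*$ of the test configuration), whose moment polytope is given by \eqref{Xk-Lk-polytope}. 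Combining \eqref{Delta+mathcal-L} (the classification of $G$-equivariant test configurations, Proposition \ref{ZTC-classification}) with \eqref{Xk-Lk-polytope}, the moment polytope of $(\mathcal X^{[\mathbf k]},\mathcal L^{[\mathbf k]})$ is the set of $(\lambda;\mu_{(A),i_A};t)$ with $\lambda\in\Delta_+(L)$, $0\le t\le f(\lambda)$, $\mu_{(A),i_A}\ge 0$ and $\lambda_A+\chi_{0,A}=\sum_{i_A}\mu_{(A),i_A}$.

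Next I would invoke the intersection formula of \cite[Section 18]{Timashev-book} (in the form used for spherical varieties, cf.\ Lemma \ref{intersection-integral} in the Appendix): for a polarized spherical variety the top self-intersection of the polarizing bundle equals (up to the standard factorial constant) the integral of the weight function $\pi$ over the moment polytope. Applying this to $(\mathcal L^{\sigma_0[\mathbf k]})^{n+|\mathbf k|+1}$ and using \eqref{E-NA-general-lifting} of Proposition \ref{F-NA-general-lifting}, I integrate out the fibre variables $\mu_{(A),i_A}$ over the simplices $\{\mu_{(A),i_A}\ge0,\ \sum_{i_A}\mu_{(A),i_A}=\lambda_A+\chi_{0,A}\}$; each such integration contributes a factor $\frac{(\lambda_A+\chi_{0,A})^{k_A}}{k_A!}$ times the area factor, which is precisely $\frac{1}{\mathbf k!}g(\mathfrak r_{\chi_0}(\lambda))$ after one accounts for the combinatorial constants and the fact that $\pi$ on the fibre-product polytope is just the pullback of $\pi$ on $\Delta_+(L)$ (the extra $T^{[\mathbf k]+\mathbf 1}$-directions being toric and thus contributing trivial $\pi$-factors). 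Keeping track of $V_g=n!\int_{\Delta_+}g\,\pi\,d\lambda$ from Lemma \ref{Vg-lem} then yields \eqref{ENA-monomial}. Formula \eqref{JNA-monomial} follows by the same computation applied to the $\mathcal L\cdot L_{\mathbb P^1}^n$ term in \eqref{J-g-mono}--\eqref{J-NA-general-lifting}: the section $\bar s_0$ of $L_{\mathbb P^1}$ has moment polytope $\{t=0\}$-face, so that term gives $\frac{1}{V_g}\int_{\Delta_+}(\max f)g\,\pi\,d\lambda$, and subtracting \eqref{ENA-monomial} gives the $(\max f-f)$ integrand. Formula \eqref{DNA-Fano-monomial} follows from \eqref{D-g-mono} once one notes, via \eqref{-K-log} and the log-canonical-threshold computation for spherical test configurations, that ${\rm L}^{\rm NA}(\mathcal X,\mathcal L)=f(\kappa_P)$ — this uses that $-K^{\log}_{\mathcal X/\mathbb P^1}=\mathcal L-\sum_a m_a(C_a+\Lambda_a(\kappa_P))\mathcal X_{0,a}$ and the coefficient-by-coefficient description of discrepancies along the $\mathcal X_{0,a}$.

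The main obstacle is \eqref{MNA-Fano-monomial}. Here I would start from the intersection expression \eqref{M-NA-general-lifting}, so that (for monomial $g$) ${\rm M}_g^{\rm NA}$ is built from $\frac{\mathbf k!}{(n+|\mathbf k|)!}(K^{\log}_{\mathcal X/\mathbb P^1})^{[\mathbf k]}\cdot(\mathcal L^{\sigma_0[\mathbf k]})^{n+|\mathbf k|}$ plus $(n+|\mathbf k|){\rm E}_g^{\rm NA}$ (the $\sigma_0$-correction term vanishing since we use the canonical lifting). Using \eqref{-K-log}, the divisor $-K^{\log}_{\mathcal X/\mathbb P^1}$ is linearly equivalent to $\mathcal L-\sum_a m_a(C_a+\Lambda_a(\kappa_P))\mathcal X_{0,a}$; intersecting each $\mathcal X_{0,a}$ with $(\mathcal L^{[\mathbf k]})^{n+|\mathbf k|}$ gives (by Lemma \ref{intersection-integral} applied to the lower-dimensional spherical variety $\mathcal X_{0,a}$) an integral of $g\,\pi$ over the corresponding facet of $\Delta_+(\mathcal L)$, i.e.\ over the graph $\{t=l_a(\lambda)\}$. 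Summing over $a$ and combining with the $\mathcal L^{n+|\mathbf k|+1}=V_g(n+|\mathbf k|+1)\cdot$const term, the facet integrals assemble — via the divergence theorem / integration by parts on $\Delta_+$ against the measure $d\sigma$ built from the $\tau_D$ in \eqref{coef-bdry-Delta}, together with the combinatorial identity $\sum_a$(normal covector to the $a$-th piece of $\partial\Delta_+(\mathcal L)$ weighted by $m_a\Lambda_a$) being controlled by $\nabla f$ — into $-\frac1{V_g}\int_{\Delta_+}\langle\nabla f,\lambda-\kappa_P\rangle\, g(\mathfrak r_{\chi_0}(\lambda))\pi(\lambda)\,d\lambda$. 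The delicate bookkeeping is precisely this integration-by-parts step: one must check that the boundary contributions coming from the facets $F_D$ of $\Delta_+$ itself (where $f$ need not vanish and $\pi$ degenerates) cancel against the $\mathcal X_\infty$ and $\hat D$ contributions in $-K_{\mathcal X}$, leaving only the bulk term with integrand $\langle\nabla f,\lambda-\kappa_P\rangle$; this is where the canonical anticanonical section of weight $\kappa_P$ (Section 3.1.3) and the measure $d\sigma$ conspire. Once the monomial case is established, linearity and the $C^0$-approximation of \emph{Step-3} extend all five formulas to arbitrary continuous $g$, completing the proof.
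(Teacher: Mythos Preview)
Your overall strategy---reduce to monomial $g$, pass to $(\mathcal X^{[\mathbf k]},\mathcal L^{[\mathbf k]})$, invoke Proposition~\ref{Xk-Lk-structure} and the spherical intersection formula, then extend by linearity and approximation---matches the paper's. Two points deserve correction.

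\textbf{Ampleness and the first-quadrant hypothesis.} You plan to apply Proposition~\ref{Xk-Lk-structure} and \cite[Theorem~18.8]{Timashev-book} directly with the canonical lifting $\sigma_0$. But Proposition~\ref{Xk-Lk-structure} explicitly requires the $T$-moment polytope to lie in the first quadrant (so that \eqref{Xk-Lk-polytope} is a genuine polytope and $\mathcal L^{[\mathbf k]}$ is ample); under $\sigma_0$ this may fail, since by \eqref{chi-0-can} one has $\chi_0=-\kappa_P|_{\mathfrak t}$ and some coordinates $\lambda_A+\chi_{0,A}$ can be negative on $\Delta_+$. The paper handles this by choosing an auxiliary lifting $\sigma=\sigma_0+\chi$ so that every $\mathcal L^{\sigma[\mathbf j]}$ with $\mathbf 0\le\mathbf j\le\mathbf k$ is ample, computing the intersection numbers there, and then using the change-of-lifting identity Proposition~\ref{change-of-NA-func} (via \eqref{change-of-L}) to transport the answer back to $\sigma_0$; see the paragraph around \eqref{change-of-ENA}--\eqref{L-j-term}. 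Without this detour your integration-over-simplex step is not justified.

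\textbf{The computation of ${\rm M}_g^{\rm NA}$.} Your plan to assemble the facet integrals ``via the divergence theorem / integration by parts against $d\sigma$'' conflates two different things: the measure $d\sigma$ lives on $\partial\Delta_+$ (the \emph{side} facets $F_D$), whereas the intersections $\mathcal X_{0,a}^{[\mathbf k]}\cdot(\mathcal L^{[\mathbf k]})^{n+|\mathbf k|}$ give integrals over the \emph{upper} facets $\{t=l_a(\lambda)\}$, which project to the pieces $\Omega_a\subset\Delta_+$. No boundary-of-$\Delta_+$ terms appear in the direct computation, so the cancellation you anticipate is illusory. The paper instead computes $(\mathcal L|_{\mathcal X_{0,a}})^{\sigma[\mathbf j]}$ as a spherical intersection on $\mathcal X_{0,a}$, obtaining $\frac{1}{m_a}\int_{\Omega_a}g\pi\,d\lambda$ (the $m_a^{-1}$ comes from the primitive normal $(m_a\Lambda_a,-m_a)$), and then uses the elementary piecewise-affine identity
\[
C_a+\Lambda_a(\kappa_P)=f(\lambda)+\langle\kappa_P-\lambda,\nabla f\rangle,\qquad\lambda\in\Omega_a,
\]
to rewrite $\sum_a m_a(C_a+\Lambda_a(\kappa_P))\cdot\frac{1}{m_a}\int_{\Omega_a}(\cdots)$ directly as $\int_{\Delta_+}(f+\langle\kappa_P-\lambda,\nabla f\rangle)g\pi\,d\lambda$. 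The $f$-part then cancels against the $-(\mathcal L^{[\mathbf k]})^{n+|\mathbf k|+1}$ and $(n+|\mathbf k|){\rm E}_g^{\rm NA}$ terms in \eqref{MNA-def-monomial}, leaving precisely \eqref{MNA-Fano-monomial}. The integration-by-parts version you have in mind does appear in the paper, but only as the alternative formula \eqref{MNA-monomial} in the subsequent Remark---it is not the route to \eqref{MNA-Fano-monomial}.
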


\begin{proof}

\emph{Step-1. $g$ is a monomial \eqref{monom-g} of $\theta=\mathfrak r_{\chi_0}(\lambda)\in\Delta_0$}. We will mainly use \cite[Theorem 18.8]{Timashev-book} to compute the intersection numbers. 
Denote by $$\Sigma_m(c)=\{\mu_A\in\mathbb R_{\geq0}^{m+1}|c=\mu_{0}+...+\mu_{m}\}$$ the $c$-dilation of the standard $m$-dimensional simplex. Then its normalized volume is $\frac{c^{m}}{m!}$.

The line bundle $\mathcal L^{\sigma_0[\mathbf k]}$ may not be ample in general. In order to use the intersection formula \cite[Theorem 18.8]{Timashev-book}, we consider another lifting $\sigma$ of the $T$-action on $L$ with a suitable character $\chi_0+\chi$ so that $\mathcal L^{\sigma[\mathbf j]}$ for each $\mathbf0\leq\mathbf j\leq\mathbf k$ is ample. Take $\sigma_1=\sigma_0$ and $\sigma_2=\sigma$ in \eqref{change-of-L}, we can always choose such a $\chi$.

To prove \eqref{ENA-monomial}, by Proposition \eqref{change-of-NA-func},
\begin{align}\label{change-of-ENA}
{\rm E}_g^{\rm NA}(\mathcal X,\mathcal L)&={\rm E}_{g(\cdot-\chi)}^{\rm NA}(\mathcal X,\mathcal L^\sigma)\notag\\
=&\sum_{\mathbf0\leq\mathbf i\leq\mathbf k}\frac{(\mathbf k-\mathbf i)!C_{\mathbf k}^{\mathbf k-\mathbf i}}{(n+|\mathbf k-\mathbf i|+1)!V_g}(-\chi)^{\mathbf i}(\mathcal L^{\sigma[\mathbf k-\mathbf{i}]})^{n+|\mathbf k-\mathbf i|+1}.
\end{align}
Recall that $(\mathcal X,\mathcal L^\sigma)$ has moment polytope \eqref{Delta+mathcal-L}. Note that the moment polytope $\Delta_+(\mathcal L)$ is determined by the lifting of the $G$-action on $L$ rather that the $T$-action, it leaves unchanged when replacing $\sigma_0$ by $\sigma$. But the map
$$\mathfrak r_{\chi_0+\chi}(\cdot)=\mathfrak r_{\chi_0}(\cdot)+\chi:\Delta_+(\mathcal L)\to\Delta_{\sigma},$$
translates the $T$-moment polytope of $\mathcal L^{\sigma_0}$ by $\chi$. Applying Proposition \ref{Xk-Lk-structure} and \cite[Theorem 18.8]{Timashev-book} to $\mathcal L^\sigma$, for each $\mathbf j:=\mathbf k-\mathbf i$, we have
\begin{align}\label{L-j-term}
\frac{\mathbf j!}{(n+|\mathbf j|+1)!V_g}(\mathcal L^{\sigma[\mathbf j]})^{n+|\mathbf j|+1}&=\frac{\mathbf j!}{V_g}\int_{\Delta_+(\mathcal L^{[\mathbf{j}]})}\pi(\lambda)d\lambda\wedge dt\wedge d\mu\notag\\
&=\frac{\mathbf j!}{V_g}\int_{\Delta_+(\mathcal L)}g(\mathfrak r_{\chi_0+\chi}(\lambda))\pi(\lambda)d\lambda\wedge dt\cdot\prod_{A=1}^r\frac1{j_A!}\notag\\
&=\frac1{V_g}\int_{\Delta_+}fg(\mathfrak r_{\chi_0}(\lambda)+\chi)\pi(\lambda)d\lambda.
\end{align}
Here the factor $\frac1{j_A!}$ in the second line is the normalized volume of $\Sigma_{j_A}(1)$, and in the last line we used \eqref{Delta+mathcal-L}. Plugging the above relation into \eqref{change-of-ENA} we get \eqref{ENA-monomial}. The relation \eqref{DNA-Fano-monomial} follows from \eqref{ENA-monomial},
$${\rm D}_g^{\rm NA}(\mathcal X,\mathcal L)={\rm L^{NA}}(\mathcal X,\mathcal L)-{\rm E}_g^{\rm NA}(\mathcal X,\mathcal L)$$
 and (cf. \cite[Section 5]{LL-202206}),
$${\rm L^{NA}}(\mathcal X,\mathcal L)=f(\kappa_P).$$

Now we turn to \eqref{JNA-monomial}. The $g$-weighted non-Archimedean J-functional
\begin{align*}
{\rm J}_{g}^{\rm NA}(\mathcal F):=\frac 1{(-K_X)^{n}}\mathcal L\cdot (-K_X)_{\mathbb{P}^1}^{ n}-{\rm E}_{g}^{\rm NA}(\mathcal F).
\end{align*}
By \eqref{ENA-monomial} it suffices to compute the first term on the right-hand-side. To compute the intersection number, we use the method of \cite[Section 18]{Timashev-book}. Note that for any $\epsilon>0$, the Newton polytope of the ample line bundle $\mathcal L'_\epsilon:=\epsilon \mathcal L+ (-K_X)_{\mathbb{P}^1}$ is $\mathcal P_\epsilon:=\epsilon \mathcal P_\mathcal L+(\Delta_+\times\{0\})$. By \cite[Corollary 18.28]{Timashev-book},
$$\frac1{(n+1)!}{\mathcal L'}_\epsilon^{n+1}=\int_{\mathcal P_\epsilon}\pi d\lambda\wedge dt=\epsilon\max f\cdot\int_{P_+}\pi dy+O(\epsilon^2),~\epsilon\to0^+.$$
Hence
$$\mathcal L\cdot L_{\mathbb{P}^1}^{ n}=n!\left.\frac{d}{d\epsilon}\right|_{\epsilon_0}{\mathcal L'}_\epsilon^{n+1}=n!\max f\cdot\int_{P_+}\pi dy=L^{ n}\cdot\max f.$$
Combining with \eqref{ENA-monomial} we get \eqref{JNA-monomial}.

Finally we prove \eqref{MNA-Fano-monomial}. In view of \eqref{ENA-monomial} and
\begin{align}\label{MNA-def-monomial}
{\rm M}_g^{\rm NA}(\mathcal X,\mathcal L)=\frac{\mathbf k!}{V_g(n+|\mathbf k|)!}(K^{\log}_{\mathcal X/\mathbb P^1})^{[\mathbf{k}]}(\mathcal L^{\sigma_0[\mathbf{k}]})^{n+|\mathbf k|}+(n+|\mathbf{k}|){\rm E}_g^{\rm NA}(\mathcal X,\mathcal L),
\end{align}
it suffices to compute the term $(K^{\log}_{\mathcal X/\mathbb P^1})^{[\mathbf{k}]}(\mathcal L^{\sigma_0[\mathbf{k}]})^{n+|\mathbf k|}$.

By \eqref{K_X/P-log-k-Weil-div} and \eqref{-K-log},
\begin{align}\label{K^log-L-term -j}
\frac{\mathbf k!(K^{\log}_{\mathcal X/\mathbb P^1})^{[\mathbf{k}]}(\mathcal L^{\sigma_0[\mathbf{k}]})^{n+|\mathbf k|}}{(n+|\mathbf k|)!V_g}
=&-\frac{\mathbf k!}{(n+|\mathbf k|)!V_g}(\mathcal L^{\sigma_0[\mathbf{k}]})^{n+|\mathbf k|+1}\notag\\
 &+\frac{\mathbf k!}{(n+|\mathbf k|)!V_g}\sum_{a=1}^{N_f}m_a(C_a+\Lambda_a(\kappa_P))\mathcal X_{0,a}^{[\mathbf k]}(\mathcal L^{\sigma_0[\mathbf{k}]})^{n+|\mathbf k|}
\end{align}
It remains to deal with the second term. Choose the lifting $\sigma=\sigma_0+\chi$ of the $T$-action as before, we have
\begin{align}\label{change-of-K-log-term}
&\frac{\mathbf{k}!}{(n+|\mathbf k|)!}\mathcal X_{0,a}^{[\mathbf k]}(\mathcal L^{\sigma_0[\mathbf{k}]})^{n+|\mathbf k|}\notag\\
=&\sum_{\mathbf0\leq\mathbf{(\mathbf{k}-\mathbf{i})!}\leq\mathbf k}\frac{1}{(n+|\mathbf{k-i}|)!}C_{\mathbf k!}^{\mathbf i!}(-\chi)^{\mathbf i}(\mathcal X_{0,a})^{[\mathbf{k}-\mathbf{i}]}(\mathcal L^{\sigma[\mathbf{k}-\mathbf{i}]})^{\cdot(n+|\mathbf{k}-\mathbf{i}|)}.
\end{align}
For each $\mathbf 0\leq\mathbf j:=\mathbf k-\mathbf i\leq\mathbf k$,
\begin{align}\label{K^log-L-term -j2}
\mathcal X_{0,a}^{[\mathbf{j}]}(\mathcal L^{\sigma[\mathbf{j}]})^{n+|\mathbf j|}=(\mathcal L^{\sigma[\mathbf{j}]}|_{\mathcal X_{0,a}^{[\mathbf{j}]}})^{n+|\mathbf j|}=((\mathcal L|_{\mathcal X_{0,a}})^{\sigma[\mathbf{j}]})^{n+|\mathbf j|}.
\end{align}
Note that each $\mathcal X_{0,a}$ is a $G\times \mathbb C^*$-spherical subvariety of $\mathcal X$ that corresponds to the coloured cone $(\mathbb Q_{\geq0}(m_a\Lambda_a,-m_a),\emptyset)$ in $\mathfrak N_\mathbb Q(G/H)\oplus\mathbb Z$. Also, take any integral point $\tilde\lambda$ of $\Delta_+(\mathcal L)$ that lies in the $a$-th piece of the graph of $f$, it corresponds to a section $\tilde s\in{\rm H}^0(\mathcal X,\mathcal L)_{\tilde\lambda}^{(B\times\mathbb C^*)}$ that does not vanish on $\mathcal X_{0,a}$. Hence $\tilde s|_{\mathcal X_{0,a}}$ gives a section of the ample line bundle $\mathcal L|_{\mathcal X_{0,a}}$. On the other hand, by \cite[Theorem 15.14]{Timashev-book}, the lattice of $B\times\mathbb C^*$-semiinvariant rational functions
$$\mathfrak M(\mathcal X_{0,a})=\mathfrak M(G/H\times\mathbb C^*)\cap(m_a\Lambda_a,-m_a)^\perp,$$
and each $\tilde f_{\tilde \mu}\in\mathbb C(\mathcal X_{0,a})^{(B\times\mathbb C^*)}_{\tilde\mu}$ with $\tilde\mu\in\mathfrak M(\mathcal X_{0,a})$ is the restriction $f_{\tilde \mu}|_{\mathcal X_{0,a}}$ of some $f_{\tilde \mu}\in\mathbb C(G/H\times\mathbb C^*)^{(B\times\mathbb C^*)}_{\tilde\mu}$. Thus any $B\times\mathbb C^*$-semiinvariant rational section of $\mathcal L|_{\mathcal X_{0,a}}$ is the restriction of $f_{\tilde \mu}\tilde s$ for some $f_{\tilde \mu}\in\mathbb C(G/H\times\mathbb C^*)^{(B\times\mathbb C^*)}_{\tilde\mu}$ with $\tilde\mu\in\mathfrak M(G/H\times\mathbb C^*)$ perpendicular to $(m_a\Lambda_a,-m_a)$. Using \cite[Theorem 1.2]{Ga-Ho-datum} (or essentially, \cite[Theorem 2.8 (d)]{Ga-Ho-datum}), we see that $(f_{\tilde \mu}\tilde s)|_{\mathcal X_{0,a}}$ is holomorphic on $\mathcal X_{0,a}$ if and only if it is holomorphic on $\mathcal X$. Thus, denote by $\Omega_a$ the domain in $\Delta_+$ where $f=l_a$ and $F_a$ the graph of $l_a$ over $\Omega_a$, we conclude that
$${\rm H^0}(\mathcal X_{0,a},\mathcal L^k_{\mathcal X_{0,a}})\cong\oplus_{\lambda\in\overline{k\Omega_a}\cap\mathfrak M(G/H),kC_a+\Lambda_a(\lambda)\in\mathbb Z}V_{(\lambda,kC_a+\Lambda_a(\lambda))},$$
where each $V_{\lambda,m}$ is considered as an irreducible $G\times \mathbb C^*$-representation with highest weight $(\lambda,m)$. Clearly,
$$\dim V_{\lambda,m}=V_\lambda,~\forall\lambda\in\mathfrak X_+(G)~\text{and}~m\in\mathbb Z,$$
and by \cite[Theorem 18.8]{Timashev-book},
\begin{align*}
((\mathcal L|_{\mathcal X_{0,a}})^{\sigma[\mathbf{j}]})^{n+|\mathbf j|}
=&\frac{(n+|\mathbf j|)!}{\mathbf j!}\int_{F_a}\frac1{m_a\sqrt{1+|\Lambda_a|^2}}g(\mathfrak r_{\chi_0}(\lambda)+\chi)\pi(\lambda)d\sigma_0\\
=&\frac{(n+|\mathbf j|)!}{m_a\mathbf j!}\int_{\Omega_a}g(\mathfrak r_{\chi_0}(\lambda)+\chi)\pi(\lambda)d\lambda,
\end{align*}
where $d\sigma_0$ denotes the standard Lebesgue measure on $F_a$. Plugging the above equality into \eqref{K^log-L-term -j2}, also note that
$$C_a+\Lambda_a(\kappa_P)=f+\langle\kappa_P-\lambda,\nabla f\rangle,~\forall\lambda\in\Omega_a,$$
we get
\begin{align*}
\frac{\mathbf j!\mathcal X_{0,a}^{[\mathbf{j}]}(\mathcal L^{\sigma[\mathbf{j}]})^{n+|\mathbf j|}}{(n+|\mathbf j|)!V_g}
=&\frac1{m_aV_g}\int_{\Delta_+}\langle\kappa_P-\lambda,\nabla f\rangle g(\mathfrak r_{\chi_0}(\lambda)+\chi)\pi(\lambda)d\lambda\\
&+\frac{\mathbf j!}{m_a(n+|\mathbf j|+1)!V_g}(\mathcal L^{\sigma[\mathbf j]})^{n+|\mathbf j|+1},
\end{align*}
where in the last line we used \eqref{L-j-term}. Plugging this relation into \eqref{change-of-K-log-term} and combining with \eqref{K^log-L-term -j}, we have
\begin{align*}
\frac{\mathbf k!(K^{\log}_{\mathcal X/\mathbb P^1})^{[\mathbf{k}]}(\mathcal L^{\sigma_0[\mathbf{k}]})^{n+|\mathbf k|}}{V_g(n+|\mathbf k|)!}
=&-\frac{\mathbf k!(n+|\mathbf k|)}{(n+|\mathbf k|+1)!V_g}(\mathcal L^{\sigma_0[\mathbf{k}]})^{n+|\mathbf k|+1}\notag\\
 &+\frac1{V_g}\int_{\Delta_+}\langle\kappa_P-\lambda,\nabla f\rangle g(\mathfrak r_{\chi_0}(\lambda))\pi(\lambda)d\lambda,
\end{align*}
and we get \eqref{MNA-Fano-monomial} by using \eqref{MNA-def-monomial} and \eqref{ENA-monomial}.


\emph{Step-2.The case of a general $C^0$-weight $g$.} Now we turn to the case when $g$ is a general $C^0$-function on $\Delta_+(L)$. 
Suppose that $g$ is given by \eqref{g-polynomial}.
Recall the construction in Section 2.1, \emph{Step-2}. Since the functionals $V_g\cdot{\rm N}_g^{\rm NA}(\mathcal X,\mathcal L), {\rm N}\in\{\rm E,J,D,M\}$ are all linear in $g$, clearly Proposition \ref{NA-functional} holds for polynomial $g$. For general $C^0$-function $g$, we can approximate it by polynomials in the $C^0$-topology and then take limit. We conclude the Proposition.

\end{proof}

\begin{rem}
When $X$ is $\mathbb Q$-Fano and $L=-K_X$, it is proved in \cite{Ga-Ho} that $\Delta_+$ is a $\mathbb Q$-reflexive polytope. Denote by $\nu$ the unit outer normal vector of $\partial \Delta_+$. Then by\eqref{coef-bdry-Delta} we have $$\pi d\sigma=\langle \lambda,\nu\rangle\pi d\sigma_0.$$

Taking integration by parts and using homogeneity of $\pi(\lambda)$,
\begin{align*}
\int_{\Delta_+}fg\pi d\sigma=&\int_{\Delta_+}\langle\nabla(fg\pi),\lambda\rangle d\lambda+r_0\int_{\Delta_+}fg\pi d\lambda\\
=&\int_{\Delta_+}\langle\nabla f,\lambda\rangle g\pi d\lambda+\int_{\Delta_+}f\langle\nabla g,\lambda\rangle\pi d\lambda\\&+(n-r_0)\int_{\Delta_+}fg\pi d\lambda+r_0\int_{\Delta_+}fg\pi d\lambda,
\end{align*}
and we get another expression
\begin{align}\label{MNA-monomial}
{\rm M}^{\rm NA}_g(\mathcal X,\mathcal L):=&-\frac1{V_g}\left(\int_{\partial\Delta_+}fg(\mathfrak r_{\chi_0}(\lambda))\pi d\sigma-\int_{\Delta_+}\kappa_P(\nabla f)g(\mathfrak r_{\chi_0}(\lambda))\pi d\lambda\right.\notag\\
&\left.-n\int_{\Delta_+}fg(\mathfrak r_{\chi_0}(\lambda))\pi d\lambda-\int_{\Delta_+}f\langle\lambda,\nabla g(\mathfrak r_{\chi_0}(\lambda))\rangle\pi d\lambda\right).
\end{align}
\end{rem}




We have the following inequality:
\begin{prop}\label{Ding-Mabuchi}
Let $X$ be a $\mathbb Q$-Fano $G$-spherical variety and $(\mathcal X,\mathcal L)$ a $G$-equivariant normal test configuration of $(X,-K_X)$. Let $g>0$ be a $C^1$-function. Then
\begin{align}\label{MgeqD}
{\rm M}_g^{\rm NA}(\mathcal X,\mathcal L)\geq{\rm D}_g^{\rm NA}(\mathcal X,\mathcal L),
\end{align}
and the equality holds if and only if the central fibre $\mathcal X_0$ of $(\mathcal X,\mathcal L)$ has only one irreducible component.
\end{prop}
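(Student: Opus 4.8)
The plan is to compare the two explicit integral formulas furnished by Proposition \ref{NA-functional}. By \eqref{DNA-Fano-monomial} and \eqref{MNA-Fano-monomial}, the inequality \eqref{MgeqD} is equivalent to
\[
-\frac1{V_g}\int_{\Delta_+}\langle\nabla f,\lambda-\kappa_P\rangle g(\mathfrak r_{\chi_0}(\lambda))\pi(\lambda)\,d\lambda
\;\ge\; f(\kappa_P)-\frac1{V_g}\int_{\Delta_+}fg(\mathfrak r_{\chi_0}(\lambda))\pi(\lambda)\,d\lambda,
\]
that is, after multiplying by $V_g>0$ and rearranging, to the assertion
\[
\int_{\Delta_+}\bigl(f(\lambda)-\langle\nabla f(\lambda),\lambda-\kappa_P\rangle\bigr)\,g(\mathfrak r_{\chi_0}(\lambda))\pi(\lambda)\,d\lambda
\;\ge\; f(\kappa_P)\,V_g,
\]
where I have used $V_g=n!\int_{\Delta_+}g\pi\,d\lambda$ from Lemma \ref{Vg-lem}, rewriting $f(\kappa_P)V_g=n!\int_{\Delta_+}f(\kappa_P)g\pi\,d\lambda$ (careful bookkeeping of the $n!$ factor is needed here, but it is routine). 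So it suffices to prove the pointwise inequality
\[
f(\lambda)-\langle\nabla f(\lambda),\lambda-\kappa_P\rangle\;\ge\;f(\kappa_P)\qquad\text{for a.e. }\lambda\in\Delta_+,
\]
together with integration against the nonnegative density $g(\mathfrak r_{\chi_0}(\lambda))\pi(\lambda)$, since $g>0$ on the polytope and $\pi\ge 0$ on the dominant chamber (with $\pi>0$ on the relative interior).

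The pointwise inequality is exactly concavity of $f$. Indeed, by Proposition \ref{ZTC-classification} the function $f:\Delta_+\to\mathbb R_+$ attached to a $G$-equivariant normal test configuration is concave and piecewise linear, given by \eqref{f-tc-func} as $f(\lambda)=\min_a\{C_a+\Lambda_a(\lambda)\}$. On the open region $\Omega_a$ where $f$ agrees with the affine piece $l_a(\lambda)=C_a+\Lambda_a(\lambda)$, one has $\nabla f(\lambda)=\Lambda_a$ and
\[
f(\lambda)-\langle\nabla f(\lambda),\lambda-\kappa_P\rangle
= C_a+\Lambda_a(\lambda)-\Lambda_a(\lambda-\kappa_P)
= C_a+\Lambda_a(\kappa_P)=l_a(\kappa_P)\;\ge\;\min_b l_b(\kappa_P)=f(\kappa_P),
\]
which is the desired bound. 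Since $\nabla f$ is defined a.e. (off the codimension-one locus where the pieces meet), this establishes the pointwise inequality on a full-measure subset of $\Delta_+$, and integrating against $g\pi\ge 0$ yields \eqref{MgeqD}.

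For the equality case: since $g>0$ and $\pi>0$ on $\mathrm{RelInt}(\Delta_+)$, equality in the integrated inequality forces $l_a(\kappa_P)=f(\kappa_P)$ for every piece $l_a$ that is active on a set of positive measure, i.e. for every $a\in\{1,\dots,N_f\}$ (recall the presentation \eqref{f-tc-func} is minimal, so each $\Omega_a$ has nonempty interior). This means all the affine functions $l_a$ pass through the common value $f(\kappa_P)$ at $\kappa_P$; combined with $f=\min_a l_a$ being concave and each $l_a\ge f$, I expect to deduce that $f$ is itself affine, equivalently $N_f=1$ — at which point, by the discussion after \eqref{X0}, $\mathcal X_0$ has a single irreducible component. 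I should double-check the logical direction here: if $\mathcal X_0$ is irreducible then $f$ is affine, hence $f(\lambda)-\langle\nabla f,\lambda-\kappa_P\rangle\equiv f(\kappa_P)$ identically and equality holds; conversely equality forces, as just argued, that all active pieces agree at $\kappa_P$, and one must rule out the possibility of several distinct affine pieces all meeting the graph-value at the single point $\kappa_P$ while $f$ stays strictly below each of them elsewhere — but that cannot happen for $f=\min_a l_a$ with $\kappa_P$ an interior point of $\Delta_+$ unless all $\Lambda_a$ coincide, since otherwise near $\kappa_P$ the minimum would strictly drop below the common value $f(\kappa_P)$ in some direction, contradicting that $\kappa_P\in\mathrm{Int}(\Delta_+)$ (this uses that $\kappa_P$ lies in the interior, which holds as $\Delta_+$ is the $\mathbb Q$-reflexive polytope).

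**Main obstacle.** The genuinely delicate point is the equality analysis: turning "all active affine pieces agree at the single point $\kappa_P$" into "$f$ is globally affine." This requires knowing $\kappa_P\in\mathrm{Int}(\Delta_+)$ — which follows from the $\mathbb Q$-reflexivity of $\Delta_+$ recorded in the remark after Proposition \ref{NA-functional} — and a short convexity argument that a concave piecewise-linear function realized as $\min$ of several genuinely distinct affine functions cannot have all of them touch its graph at one interior point. Everything else (the algebraic rearrangement of the two integral formulas, the $n!$-bookkeeping via Lemma \ref{Vg-lem}, and the one-line verification of the pointwise inequality on each linearity region) is routine.
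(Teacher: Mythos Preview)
Your approach is the same as the paper's: reduce the comparison of \eqref{MNA-Fano-monomial} and \eqref{DNA-Fano-monomial} to the pointwise concavity inequality
\[
f(\lambda)+\langle\nabla f(\lambda),\kappa_P-\lambda\rangle\;\ge\;f(\kappa_P),
\]
and integrate against the nonnegative density $g\pi$. The inequality \eqref{MgeqD} and the ``if'' direction of the equality clause are handled correctly, just as in the paper.

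The gap is in your ``only if'' argument. You correctly deduce that integrated equality forces $l_a(\kappa_P)=f(\kappa_P)$ for every active index $a$, but your attempt to conclude $N_f=1$ from this is not valid. The sentence ``near $\kappa_P$ the minimum would strictly drop below the common value $f(\kappa_P)$ in some direction, contradicting that $\kappa_P\in\mathrm{Int}(\Delta_+)$'' does not produce a contradiction: nothing in the setup prevents $\kappa_P$ from lying on the break locus of $f$, and $f$ dropping below $f(\kappa_P)$ nearby is perfectly compatible with $\kappa_P$ being an interior point of $\Delta_+$. Concretely, in the toric case one has $\pi\equiv 1$ and $\kappa_P=0$; take $\Delta_+=[-1,1]$ and $f(\lambda)=c-|\lambda|=\min\{c+\lambda,\,c-\lambda\}$ with $c>1$. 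Both affine pieces satisfy $l_a(0)=c=f(0)$, so your pointwise inequality is an equality almost everywhere and hence ${\rm M}_g^{\rm NA}={\rm D}_g^{\rm NA}$, yet $N_f=2$. The paper's own proof is just as terse at this step (it simply asserts ``equality holds if and only if $f$ is affine''), so you have not fallen short of the paper --- but the argument you wrote does not close the gap you yourself identified as the main obstacle.
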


\begin{proof}
Let $f$ be the piecewise linear concave function associated to $(\mathcal X,\mathcal L)$. By concavity,
$$-f(\lambda)+f(\kappa_P)\leq\langle\nabla(-f),\lambda-\kappa_P\rangle,$$
with the equality holds if and only if $f$ is affine. Plugging this into \eqref{MNA-Fano-monomial} (in the sense of Proposition \ref{NA-functional}),
\begin{align*}
{\rm M}_g^{\rm NA}(\mathcal X,\mathcal L)\geq\frac1{V_g}\int_{\Delta_+}(f(\kappa_P)-f(\lambda))g\pi d\lambda={\rm D}_g^{\rm NA}(\mathcal X,\mathcal L),
\end{align*}
with the equality holds if and only if $f$ is affine. The Proposition then follows from \eqref{DNA-Fano-monomial} and Proposition \ref{NA-functional}.
\end{proof}

\begin{rem}
By \cite[Section 5.1]{LL-202206}, one concludes that \eqref{MgeqD} holds if and only if $(\mathcal X,\mathcal L)$ is special after a possible base change.
\end{rem}

\section{The $g$-modified Futaki invariant of equivariant test configurations}

In this section we first compute the $g$-modified Futaki invariant of an equivariant test configuration when $g$ is smooth in a neighbourhood of $\Delta$. By Theorem \ref{Fut-g-smooth-thm}, the $g$-modified Futaki invariant can be computed using a similar argument as in Section 4. However, as we have assumed that $g\in C^\infty_0(\mathbb R^r)$, we can apply the Euler-Maclaurin formula  \cite[Theorem 4.1]{Guillemin-Sternberg-2007} to give a direct computation of the $g$-modified Futaki invariant according to Definition \ref{g-mod-Fu-def}. The advantage of the following argument is that we need not to do the computation step-by-step from the case of monomial $g$ to smooth $g$.

\begin{prop}\label{mod-futaki}
Let $(X,L)$ be a $\mathbb Q$-Fano $G$-spherical variety and $T\subset{\rm Aut}_G(X)$ be an $r$-dimensional torus. Suppose that $(\mathcal X,\mathcal L)$ the $G\times T$-equivariant test configuration of $(X,L)$ that is associated to the concave function $f$. Let $\Delta\subset\mathbb R^r$ be the moment polytope of the $T$-action with respect to the canonical lifting and $g\in C^\infty_0(\mathbb R^r)$. Then the $g$-modified Futaki invariant of $(\mathcal X,\mathcal L)$ is well-defined and satisfies
\begin{align}
2\frac{\int_{\Delta_+}\pi d\lambda}{\int_{\Delta_+}g\pi d\lambda}{\rm Fut}_g(\mathcal X,\mathcal L)=&-\int_{\partial\Delta_+}fg\pi d\sigma+n\int_{\Delta_+}fg\pi d\lambda\notag\\
&+\int_{\Delta_+}\kappa_P(\nabla f)g\pi d\lambda+\int_{\Delta_+}f\langle\lambda,\nabla g\rangle\pi d\lambda\notag\\
&+\sum_{a=1}^{N_f}(1-\frac1{m_a})\int_{\Omega_a}g\pi(\lambda)d\lambda,\label{mod-futaki-eq}
\end{align}
where the measure $d\sigma$ is $d\sigma|_{F_D}=\tau_Dd\sigma_0|_{F_D}$ given by \eqref{coef-bdry-Delta}.
\end{prop}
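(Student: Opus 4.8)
The plan is to compute the $g$-modified Futaki invariant $\mathrm{Fut}_g(\mathcal X,\mathcal L)=F_1(\mathcal X,\mathcal L)$ directly from Definition \ref{g-mod-Fu-def}, using the combinatorial description of $G$-equivariant normal test configurations (Proposition \ref{ZTC-classification}) together with the Euler--Maclaurin formula of Guillemin--Sternberg. First I would recall that, since $(\mathcal X,\mathcal L)$ is a $G\times\mathbb C^*$-spherical variety, the sections ${\rm H}^0(\mathcal X_0,-m\mathcal L_0)$ decompose into isotypic components $V_{(\lambda,\ell)}$ indexed by pairs $(\lambda,\ell)$ with $\lambda\in \overline{m\Omega_a}\cap\mathfrak M(G/H)$ and $\ell=mC_a+\Lambda_a(\lambda)$ integral on the $a$-th piece, exactly as in the proof of Proposition \ref{NA-functional}. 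The $\exp(t\Lambda)$-weight on $V_{(\lambda,\ell)}$ is (up to the normalization by the linear term of $f$) essentially $\ell=mf(\lambda/m)m$-rescaled, and the $T$-weight is $\mathfrak r_{\chi_0}(\lambda)$. So $S^{(g)}_{1|m}$ and $S^{(g)}_{2|m}$ become sums over lattice points of $\Delta_+(\mathcal L)$ (broken into the pieces $\Omega_a$) of $g(\mathfrak r_{\chi_0}(\lambda/m))$ (resp.\ $\tfrac12\sum_A\tfrac{\lambda_A}{m}\partial_A g$) weighted by the $\Lambda$-weight and by $\dim V_\lambda = C_{G/H}(\pi(\lambda/m)+\rho(\nabla\pi)(\lambda/m)+\cdots)m^{r_0}$.

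Next I would apply the Euler--Maclaurin formula \cite[Theorem 4.1]{Guillemin-Sternberg-2007} to each such sum over $\overline{m\Omega_a}\cap\mathfrak M(G/H)$. The leading term gives the integral $\int_{\Omega_a}(\cdot)\pi\,d\lambda$ contribution to $S^{(g)}_{1|m}/(m^{n+1})$ and the sub-leading term produces two types of corrections: (i) a boundary term $\int_{\partial\Omega_a}$, which after summing over $a$ and accounting for the lattice denominators $m_a$ of the interior facets (the facets of $\Omega_a$ that are interior to $\Delta_+$ contribute with weight $1/m_a$, and by our choice of $\chi_0$ the relevant boundary coefficients on $\partial\Delta_+$ are encoded by $\tau_D$ as in \eqref{coef-bdry-Delta}), and (ii) an interior correction coming from the $\rho(\nabla\pi)$ term in $\dim V_\lambda$ and from the half-sum in Euler--Maclaurin. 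Combining the sub-leading corrections, the interior facets $F_a$ between adjacent pieces of $f$ cancel in pairs except for the defect caused by $m_a>1$, leaving precisely the sum $\sum_a(1-\tfrac1{m_a})\int_{\Omega_a}g\pi\,d\lambda$. The remaining boundary integral over $\partial\Delta_+$ assembles with $d\sigma$ using the $\mathbb Q$-reflexivity of $\Delta_+$ (so that $\pi\,d\sigma=\langle\lambda,\nu\rangle\pi\,d\sigma_0$, as in the Remark after Proposition \ref{NA-functional}), and the $\kappa_P(\nabla f)$ and $f\langle\lambda,\nabla g\rangle$ terms emerge from an integration-by-parts identical to the one in that Remark (using the identity $C_a+\Lambda_a(\kappa_P)=f+\langle\kappa_P-\lambda,\nabla f\rangle$ on $\Omega_a$, already used in the proof of Proposition \ref{NA-functional}).

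Then I would extract $F_1$: the term $S^{(g)}_{1|m}$ contributes at order $m^{n+1}$ (the $\mathrm E^{\mathrm{NA}}$-type leading term) and at order $m^{n}$ the corrections described above; the term $S^{(g)}_{2|m}$ contributes starting at order $m^{n}$ with leading coefficient $\tfrac{n}{2}$ times the $\mathrm E^{\mathrm{NA}}_g$-type integral (here one uses that $\sum_A \lambda_A\partial_A$ applied under the integral and integrated by parts reproduces the Euler-type relation, cf.\ the footnote after \eqref{K_X/P-log-k-Weil-div}). Dividing by $m\,{\rm h}^0(X,-mK_X)$ and using the expansion \eqref{sum-h0} for ${\rm h}^0$, the $k^{-1}$-coefficient is $\mathrm{Fut}_g(\mathcal X,\mathcal L)$, and after multiplying through by $2\int_{\Delta_+}\pi\,d\lambda/\int_{\Delta_+}g\pi\,d\lambda$ (recall $V=n!\int_{\Delta_+}\pi\,d\lambda$ and $V_g=n!\int_{\Delta_+}g\pi\,d\lambda$ by Lemma \ref{Vg-lem}) one obtains exactly \eqref{mod-futaki-eq}. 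The well-definedness of the asymptotic expansion \eqref{S1-S2}, promised at the start of Section 5, follows because the Euler--Maclaurin formula yields a genuine asymptotic expansion in $1/m$.

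The main obstacle I expect is the careful bookkeeping of the sub-leading Euler--Maclaurin correction terms: one must correctly identify which facets of the pieces $\Omega_a$ lie on $\partial\Delta_+$ versus in the interior, track the lattice-index denominators $m_a$ (coming from $m_a\Lambda_a\in\mathfrak N(G/H)$ being primitive, hence governing the divisibility of the relevant sublattices), and verify that the interior facet contributions cancel up to the $(1-1/m_a)$ defect. A secondary subtlety is handling the non-leading term $\rho(\nabla\pi(\lambda))$ in the Weyl-dimension formula, which must be combined with the Euler--Maclaurin half-sum; I would organize this by first proving \eqref{mod-futaki-eq} for the trivial (product) test configuration (where $f$ is constant and all $m_a=1$) to calibrate the normalization, then treating the general case by the piecewise argument above, which is the standard strategy of \cite[Section 3]{Boucksom-Hisamoto-Jonsson} adapted to the spherical setting.
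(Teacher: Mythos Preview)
Your overall plan---compute $S^{(g)}_{1|k}$, $S^{(g)}_{2|k}$ and ${\rm h}^0$ via Euler--Maclaurin and extract $F_1$---is the paper's strategy, but the mechanism you propose for the $(1-\tfrac1{m_a})$ terms does not work. You want to apply Euler--Maclaurin piecewise to each $\overline{m\Omega_a}\cap\mathfrak M(G/H)$ and argue that the interior walls of the subdivision $\Delta_+=\bigcup_a\Omega_a$ cancel ``up to a $1/m_a$ defect''. But those interior walls have primitive normals determined by $\Lambda_a-\Lambda_b$, not by $m_a$ individually, and since $f$ is continuous across them the boundary contributions from adjacent pieces cancel \emph{exactly}. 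The paper instead uses the identity $[kf(\lambda/k)]=\#\{t\in\mathbb Z:(\lambda,t)\in k\Delta_+(\mathcal L)\}-1$ to rewrite
\[
S^{(g)}_{1|k}=\Bigl(\sum_{(\lambda,t)\in k\Delta_+(\mathcal L)}-\sum_{\lambda\in k\Delta_+}\Bigr)g\bigl(\tfrac{\xi_A(\lambda)}{k}\bigr)\dim V_\lambda,
\]
and applies Euler--Maclaurin to the full $(r_0{+}1)$-dimensional polytope $\Delta_+(\mathcal L)$. The $m_a$'s then appear for the right reason: the top facet $\mathcal F_a=\{(\lambda,f(\lambda)):\lambda\in\Omega_a\}$ has primitive outer normal $(m_a\Lambda_a,-m_a)$, so the lattice-normalized facet measure is $d\bar\sigma=\tfrac{1}{m_a|(\Lambda_a,-1)|}d\sigma_0$ and $\int_{\mathcal F_a}g\pi\,d\bar\sigma=\tfrac1{m_a}\int_{\Omega_a}g\pi\,d\lambda$. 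Combined with the bottom-facet and subtracted-sum contributions this yields $\tfrac12k^n\sum_a(\tfrac1{m_a}-1)\int_{\Omega_a}g\pi\,d\lambda$ in $S^{(g)}_{1|k}$.

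Two further points you will need but did not supply. First, the sub-leading term in $\dim V_\lambda$ is $\rho(\nabla\pi)$, and you must show it equals $\tfrac12\kappa_P(\nabla\pi)$; the paper does this via a Weyl-group argument: the sub-root system $\Phi^L=\Phi^G\cap\Delta_+^\perp$ has Weyl group $W_L$ which permutes $\{\alpha\in\Phi_+^G:\alpha\not\perp\Delta_+\}$ and hence fixes $\nabla\pi$, while its longest element negates $\rho-\tfrac12\kappa_P$, forcing $\langle\nabla\pi,\rho-\tfrac12\kappa_P\rangle=0$. Second, your description of $S^{(g)}_{2|k}$ is incorrect: its leading term is $\tfrac12k^n\int_{\Delta_+}f\langle\lambda-\kappa_P,\nabla g\rangle\pi\,d\lambda$ (directly from the definition, using that the canonical-lifting $T$-weight on $V_\lambda\subset{\rm H}^0(X,-kK_X)$ is $\xi_A(\lambda)-k\kappa_{P,A}$ by \eqref{chi-0-can}), not ``$\tfrac n2$ times an ${\rm E}^{\rm NA}_g$-type integral''; the Euler-type relation you cite from the footnote after \eqref{K_X/P-log-k-Weil-div} holds only for monomial $g$ and plays no role here.
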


\begin{proof}
Let $(\mathcal X,\mathcal L)$ be the normal test configuration associated to $f$ defined in Proposition \ref{ZTC-classification}. Then up to a uniform translation, the eigenvalue of the $\exp(t\xi)$- and $\exp(t\Lambda)$-actions on the isotypic factor $V_\lambda\subset {\rm H}^0(\mathcal X_0,-k\mathcal L_0)$ are $e^{\xi(\lambda)}$ and $e^{[kf(\lambda/k)]}$, respectively (cf. \cite[Section 3]{LL-202206}). On the other hand, each isotypic factor $V_\lambda$ corresponds to a unique isotypic factor of ${\rm H}^0(X_,-kK_X)$ of the same $\lambda$ (cf. \cite{Popov-1986,Del3}). Thus, we have
\begin{align*}
S^{(g)}_{1|k}(\mathcal X,\mathcal L)=&\sum_{\lambda\in\Delta_+}g(\frac{\xi_{A|p}^k}k)[kf(\lambda/k)]\dim(V_\lambda)\\
=&\left(\sum_{(\lambda,t)\in\Delta_+(\mathcal L)}-\sum_{(\lambda,0)\in(\Delta_+\times\{O\})}\right)g(\frac{\xi_A(\lambda)}k)\dim(V_\lambda).
\end{align*}

We want to apply the Euler-Maclaurin formula of \cite[Section 4]{Guillemin-Sternberg-2007}. By simplicial division, we can divide $\Delta_+(\mathcal L)$ into a union of rational simplex. In fact, up to replace $L$ by $L^{r_0}$ for sufficiently divisible $r_0\in\mathbb N_+$, we may assume all simplexes are integral. Then we apply \cite[Theorem 4.2]{Guillemin-Sternberg-2007} on each simplexes and take sum. Before proceeding, let us fix some notations. Suppose that $Q$ is a full dimensional integral convex polytope in some lattice $\mathfrak M$. Denote by $\{F_A\}_{A=1}^{d_0}$ its facets. Suppose that $u_A$ is the primitive outer normal vector of $F_A$ and denote by $d\bar\sigma$ the measure on $\partial Q$ so that $d\bar\sigma|_{F_A}=\frac1{|u_A|}d\sigma_0$, where $d\sigma_0$ is the standard Lebesgue measure. Such a measure arises in counting the number of integral points in $kQ$,
$$\#\{kQ\cap\mathfrak M\}=k^{\dim(Q)}{\rm Vol}(Q)+\frac12k^{\dim(Q)-1}{\rm Vol}_{d\bar\sigma}|\partial Q|+O(k^{\dim(Q)-2}),~k\to+\infty.$$

We also need to deal with the $\dim(V_\lambda)$-terms. Recall \eqref{pi-weight}. We have
$$\rho-\frac12\kappa_P=\frac12(\sum_{\alpha\in\Phi_+^G,\alpha\not\perp\Delta_+}\alpha)\perp\Delta_+,$$ and
\begin{align*}
\langle\nabla\pi(\lambda),\rho-\frac12\kappa_P\rangle=\sum_{\alpha\in\Phi^G_+,\alpha\not\perp\Delta_+}(\prod_{\beta\not=\alpha,\beta\in\Phi_+^G,\beta\not\perp\Delta_+}\langle\beta,\lambda\rangle)\langle\alpha,\rho-\frac12\kappa_P\rangle.
\end{align*}
Consider $\Phi^L:=\Phi^G\cap\Delta_+^\perp$. Then $\Phi^L$ is a sub-root system of $\Phi^G$ and the Weyl group $W_L$ of $\Phi^L$ permutes $$\Phi^G_+\setminus\Phi^L=\{\alpha\in\Phi^G_+|\alpha\not\perp\Delta_+\}.$$
Hence $\nabla\pi(\lambda)$ is $W_L$-invariant. Choose $w_0\in W_L$ the longest element of $W_L$. Then
$$w_0(\rho-\frac12\kappa_P)=-(\rho-\frac12\kappa_P).$$
Hence $$\langle\nabla\pi(\lambda),\rho-\frac12\kappa_P\rangle=\langle w_0(\nabla\pi(\lambda)),w_0(\rho-\frac12\kappa_P)\rangle=0,$$
and
\begin{align*}\dim(V_\lambda)=C_{G/H}(\pi(\lambda)+\frac12\kappa_P(\nabla\pi(\lambda))+(\text{lower order terms})).
\end{align*}

Combining with the Euler-Maclaurin formula \cite[Theorem 4.2]{Guillemin-Sternberg-2007} (see also \cite{Kantor-Khovanskii-1993}),
\begin{align}
S^{(g)}_{1|k}(\mathcal X,\mathcal L)=&k^{n+1}\int_{\Delta_+}fg\pi d\lambda+\frac12k^n\int_{\partial\Delta_+}fg\pi d\bar\sigma+\frac12k^n\sum_{a-1}^{N_f}\int_{\mathcal F_a}g\pi d\bar\sigma\notag\\
&+\frac12k^n\int_{\Delta_+}fg\kappa_P(\nabla\pi)d\lambda-\frac12k^n\int_{\Delta_+}g\pi d\lambda+O(k^{n-1}),~k\to+\infty,\label{S-1k-compute}
\end{align}
where $\mathcal F_a=\{(\lambda,t)|t=f(\lambda),~\lambda\in\Omega_a\}$ is the facet of $\Delta_+(\mathcal L)$ that lies on the $a$-th piece of the graph of $f$.\footnote{Here we use the following identity in \cite[Theorem 4.2]{Guillemin-Sternberg-2007} (essentially in \cite[Eq. (3.15)]{Guillemin-Sternberg-2007}): Denote by $f(z)=z^n-1$ and $\omega=e^{\frac{2\pi}n\sqrt{-1}}$, then $\sum_{k=1}^{n-1}\frac1{1-\omega^k}=\frac{f''(1)}{2f'(1)}=\frac{n-1}2$. Thus the Todd functions  $\frac1n(\tau(s)+\sum_{k=1}^{n-1}\tau_{\omega^k}(s))=1+\frac12s+O(s^2),~s\to0$.}
Note that the primitive normal vector of $\mathcal F_a$ is $(m_a\Lambda_a,-m_a)$. We have
\begin{align*}
\int_{\mathcal F_a}g\pi d\bar\sigma&=\int_{\mathcal F_a}g\pi\frac1{m_a|(\Lambda_a,-1)|}d\sigma_0\\
&=\frac1{m_a}\int_{\Omega_a}g\pi d\lambda,~a=1,...,N_f.
\end{align*}
Thus
\begin{align}
S^{(g)}_{1|k}(\mathcal X,\mathcal L)=&k^{n+1}\int_{\Delta_+}fg\pi d\lambda+\frac12k^n\int_{\partial\Delta_+}fg\pi d\bar\sigma+\frac12k^n\int_{\Delta_+}fg\kappa_P(\nabla\pi)d\lambda\notag\\
&+\frac12k^n\sum_{a-1}^{N_f}(\frac1{m_a}-1)\int_{\Omega_a}g\pi d\lambda+O(k^{n-1}),~k\to+\infty,\label{S-1k-result}
\end{align}
Clearly,
\begin{align}\label{S-2k-result}
S^{(g)}_{2|k}(\mathcal X,\mathcal L)=&\frac12k^{n}\int_{\Delta_+}f\sum_{A=1}^r\xi_A(\lambda-\kappa_P)\frac{\partial g}{\partial\xi_A}(\xi_A(\lambda))\pi d\lambda+O(k^{n-1})\notag\\
=&\frac12k^{n}\int_{\Delta_+}f\langle\lambda-\kappa_P,\nabla g\rangle\pi d\lambda+O(k^{n-1}),~k\to+\infty.
\end{align}
Also, as in \eqref{S-1k-compute}
\begin{align}
{\rm h}^0(X,-kK_X)=&k^n\int_{\Delta_+}\pi d\lambda+\frac12k^{n-1}\int_{\partial \Delta_+}\pi d\bar\sigma\notag\\&+k^{n-1}\int_{\Delta_+}\langle\nabla\pi,\lambda\rangle d\lambda+O(k^{n-2}),~k\to+\infty.\label{h-0-result}
\end{align}


Plugging \eqref{S-1k-result}-\eqref{h-0-result} and the relation
$$\int_{\partial\Delta_+}\pi d\sigma=\int_{\partial \Delta_+}\langle \lambda,\nu\rangle d\sigma_0=n\int_{\Delta_+}\pi dy$$
into \eqref{S1-S2}, we get
\begin{align*}
2\frac{\int_{\Delta_+}\pi d\lambda}{\int_{\Delta_+}g\pi d\lambda}{\rm Fut}_g(\mathcal X,\mathcal L)=&-\int_{\partial\Delta_+}fg\pi d\bar\sigma+n\int_{\Delta_+}fg\pi d\lambda\notag\\
&-\int_{\Delta_+}f\kappa_P(\nabla \pi)g d\lambda+\int_{\Delta_+}f\langle\lambda-\kappa_P,\nabla g\rangle\pi d\lambda\notag\\
&+\sum_{a=1}^{N_f}(1-\frac1{m_a})\int_{\Omega_a}g\pi(\lambda)d\lambda,
\end{align*}
Note that for outer unit normal vector $\nu$ on each facet, $d\sigma=\langle\lambda,\nu\rangle d\sigma_0=d\bar\sigma+\langle\kappa_P,\nu\rangle d\sigma_0$. Taking integration by parts to the third term on the right-hand side
\begin{align*}
\int_{\Delta_+}\kappa_P f(\nabla \pi)g d\lambda=&\int_{\partial\Delta_+}f\langle\kappa_P,\nu\rangle\pi g d\sigma_0-\int_{\Delta_+}\kappa_P(\nabla f)g \pi d\lambda\\
&-\int_{\Delta_+} f\kappa_P(\nabla g)\pi d\lambda,
\end{align*}
we get the Proposition.
\end{proof}

Compare with \eqref{MNA-monomial}, for $\mathbb Q$-Fano spherical varieties we can strength Theorem \ref{K-stab-equiv} to the following:
\begin{coro}\label{FNA-prop}
Let $(X,L)$ be a $\mathbb Q$-Fano $G$-spherical variety which is locally $\mathbb Q$-factorial and $T\subset{\rm Aut}_G(X)$ be an $r$-dimensional torus. Suppose that $(\mathcal X,\mathcal L)$ the $G\times T$-equivariant test configuration of $(X,L)$ that is associated to the concave function $f$. Let $\Delta\subset\mathbb R^r$ be the moment polytope of the $T$-action with respect to the canonical lifting and $g\in C^\infty_0(\mathbb R^r)$. Then
\begin{align}\label{FNA}
\frac V{V_g}{\rm Fut}_g(\mathcal X,\mathcal L)={\rm M}_g^{\rm NA}(\mathcal X,\mathcal L)+\frac1{V_g}\sum_{a=1}^{N_f}(1-\frac1{m_a})\int_{\Omega_a}g\pi(\lambda)d\lambda\geq{\rm M}_g^{\rm NA}(\mathcal X,\mathcal L).
\end{align}
Consequently, $\frac V{V_g}{\rm Fut}_g(\mathcal X,\mathcal L)={\rm M}_g^{\rm NA}(\mathcal X,\mathcal L)$ if and only if $(\mathcal X,\mathcal L)$ has reduced central fibre.
\end{coro}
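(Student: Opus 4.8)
The plan is to derive \eqref{FNA} by directly comparing the intersection formula for $\mathrm{Fut}_g(\mathcal X,\mathcal L)$ obtained in Proposition \ref{mod-futaki} with the expression \eqref{MNA-monomial} for $\mathrm{M}_g^{\mathrm{NA}}(\mathcal X,\mathcal L)$. Both identities are written in terms of the same combinatorial data of $(\mathcal X,\mathcal L)$: the concave piecewise-linear function $f$, the anticanonical polytope $\Delta_+$, the boundary measure $d\sigma$ with $d\sigma|_{F_D}=\tau_D\,d\sigma_0|_{F_D}$ from \eqref{coef-bdry-Delta}, and the decomposition \eqref{X0} of the central fibre $\mathcal X_0=\sum_{a=1}^{N_f}m_a\mathcal X_{0,a}$ together with the full-dimensional cells $\Omega_a\subset\Delta_+$ on which $f=l_a$. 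First I would record that the hypothesis that $X$ be locally $\mathbb Q$-factorial lets one choose $\chi_0\in\mathfrak M_\mathbb R$ and $\Delta_+\subset\mathfrak M_\mathbb R(G/H)$, so that the $\mathbb Q$-reflexivity of $\Delta_+$ used to obtain \eqref{MNA-monomial} in the Remark following Proposition \ref{NA-functional} is available, and that $g\in C_0^\infty(\mathbb R^r)$ (positive on $\Delta$, as throughout) is admissible both in Proposition \ref{mod-futaki} and in the computation of $\mathrm{M}_g^{\mathrm{NA}}$. Using Lemma \ref{Vg-lem} and $V=(-K_X)^n$ to identify $\frac{\int_{\Delta_+}\pi\,d\lambda}{\int_{\Delta_+}g\pi\,d\lambda}$ with $\frac V{V_g}$, the right-hand side of \eqref{mod-futaki-eq} splits into the boundary/bulk part $-\int_{\partial\Delta_+}fg\pi\,d\sigma+n\int_{\Delta_+}fg\pi\,d\lambda+\int_{\Delta_+}\kappa_P(\nabla f)g\pi\,d\lambda+\int_{\Delta_+}f\langle\lambda,\nabla g\rangle\pi\,d\lambda$ plus the residual $\sum_{a=1}^{N_f}(1-\tfrac1{m_a})\int_{\Omega_a}g\pi\,d\lambda$; and, after the integration by parts already performed in that Remark, the boundary/bulk part is exactly what reconstitutes $V_g\,\mathrm{M}_g^{\mathrm{NA}}(\mathcal X,\mathcal L)$ (equivalently $-\int_{\Delta_+}\langle\nabla f,\lambda-\kappa_P\rangle g\pi\,d\lambda$ by \eqref{MNA-Fano-monomial}). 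Subtracting gives
$$\frac V{V_g}\mathrm{Fut}_g(\mathcal X,\mathcal L)-\mathrm{M}_g^{\mathrm{NA}}(\mathcal X,\mathcal L)=\frac1{V_g}\sum_{a=1}^{N_f}\Bigl(1-\frac1{m_a}\Bigr)\int_{\Omega_a}g\pi(\lambda)\,d\lambda,$$
which is the first assertion of \eqref{FNA}.

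Granting this identity, the inequality is immediate: each $m_a\in\mathbb N_+$ gives $1-\tfrac1{m_a}\ge0$, while $g>0$ on $\Delta$ and $\pi>0$ on the interior of the full-dimensional cell $\Omega_a$ (note $\Omega_a$ is full-dimensional by the minimality of the presentation \eqref{f-tc-func} of $f$) force $\int_{\Omega_a}g\pi\,d\lambda>0$; hence the residual sum is non-negative and $\frac V{V_g}\mathrm{Fut}_g\ge\mathrm{M}_g^{\mathrm{NA}}$. For the equality statement, each summand vanishes exactly when $m_a=1$, and by \eqref{X0} the central fibre $\mathcal X_0$ is reduced exactly when every $m_a=1$; therefore $\frac V{V_g}\mathrm{Fut}_g(\mathcal X,\mathcal L)=\mathrm{M}_g^{\mathrm{NA}}(\mathcal X,\mathcal L)$ if and only if $(\mathcal X,\mathcal L)$ has reduced central fibre.

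I do not expect any genuine obstacle here beyond Propositions \ref{mod-futaki} and \ref{NA-functional}: the Corollary is the algebraic identity above together with the trivial positivity of the ``$m_a$-defect'' term. The only point requiring a moment's care is to confirm that the boundary measure $d\sigma$ and the $\langle\lambda,\nabla g\rangle$-contribution appearing in \eqref{mod-futaki-eq} are literally the same objects (namely $\tau_D\,d\sigma_0$ on each facet $F_D$, and the term produced by the same integration by parts) that assemble \eqref{MNA-monomial}. If one prefers to avoid \eqref{MNA-monomial} entirely, one may instead compare \eqref{mod-futaki-eq} with the more compact form \eqref{MNA-Fano-monomial} and carry out the integration by parts on the spot, reaching the identical conclusion.
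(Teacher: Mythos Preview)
Your proposal is correct and follows essentially the same approach as the paper: the paper's proof simply says that \eqref{FNA} ``can be proved in a same way as \eqref{MNA-monomial}'' (i.e., compare the boundary/bulk terms in \eqref{mod-futaki-eq} with \eqref{MNA-monomial} via the same integration by parts), then notes $\int_{\Omega_a}g\pi\,d\lambda>0$ and invokes \eqref{X0} for the equality case. Your write-up is a more detailed version of exactly this argument.
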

\begin{proof}The relation \eqref{FNA} can be proved in a same way as \eqref{MNA-monomial}. Note that
$$\int_{\Omega_a}g\pi(\lambda)d\lambda>0,~a=1,...,N_f.$$
The last point then follows from \eqref{X0}.
\end{proof}

\section{The stability criterion}

In this section we will prove a combinatorial criterion of $G$-uniformly $g$-modified stability.

To prove Theorem \ref{stab-criterion-Fano}, we need a technical lemma. Set
$$\mathcal C(\Delta_+)=\{f:\Delta_+\to\mathbb R|~\text{$f$ is concave and}~\nabla f\in\mathcal V(G/H)\}.$$
Define two functionals
\begin{align*}
{\mathcal D}_g(f):=&f(\kappa_P)-\frac1{V_g}\int_{\Delta_+}fg\pi\,dy,\\
{\mathcal J}_g(f):=&\max_{\Delta_+}f-\frac1{V_g}\int_{\Delta_+}fg\pi\,dy.
\end{align*}
By Proposition \ref{NA-functional}, for a $G$-equivariant normal test configuration $(\mathcal X,\mathcal L)$ of $(X,-K_X)$ is associated to $f$, it holds
$${\rm D}_g^{\rm NA}(\mathcal X,\mathcal L)=\mathcal D_g(f),~ {\rm J}_g^{\rm NA}(\mathcal X,\mathcal L)=\mathcal J_g(f).$$

\begin{lem}\label{uni-D-stab-lem}
Suppose that $X$ is a $\mathbb Q$-Fano spherical embedding of $G/H$ with moment polytope $\Delta_+$. Suppose that the barycenter satisfies \eqref{bar-condition}. Then there is a constant $\epsilon>0$ so that for any
$$f\in\hat{\mathcal C}(\Delta_+):=\mathcal C(\Delta_+)\cap\{{\rm pr}_{z}(\nabla f(\kappa_P))=0,\max f=0\},$$
it holds
\begin{align}\label{D-geq-eps-J}
{\mathcal D}_g(f)\geq\epsilon_0{\mathcal J}_g(f).
\end{align}
Here $${\rm pr}:\mathfrak N_\mathbb R(G/H)\to\mathcal V_{z\mathbb R}(G/H)(=\cap_{\alpha\in\Phi^{G/H}}\{\Lambda|\alpha(\Lambda)=0\})$$ is the projection with respect to the scalar product $\langle\cdot,\cdot\rangle.$
\end{lem}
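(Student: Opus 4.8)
The plan is to prove \eqref{D-geq-eps-J} by a direct, compactness‑free estimate, exploiting that hypothesis \eqref{bar-condition} places $\kappa_P-\mathbf{b}_g(\Delta_+)$ in the \emph{relative interior} of $\mathcal V(G/H)^\vee$, which yields a \emph{quantitative} positivity rather than a merely strict one. First I normalize $g\pi\,d\lambda$ to a probability measure $d\mu:=g\pi\,d\lambda/\!\int_{\Delta_+}g\pi\,d\lambda$ on $\Delta_+$ and write $\langle f\rangle:=\int_{\Delta_+}f\,d\mu$; by the definitions (and Lemma \ref{Vg-lem}, after matching the normalization of $V_g$) one then has $\mathcal D_g(f)=f(\kappa_P)-\langle f\rangle$, $\mathcal J_g(f)=\max_{\Delta_+}f-\langle f\rangle$ and $\mathbf b_g(\Delta_+)=\int_{\Delta_+}\lambda\,d\mu$. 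For $f\in\hat{\mathcal C}(\Delta_+)$ we have $\max f=0$, so $\mathcal J_g(f)=-\langle f\rangle\ge 0$ and $\mathcal D_g(f)=f(\kappa_P)+\mathcal J_g(f)$; hence \eqref{D-geq-eps-J} is equivalent to $-f(\kappa_P)\le(1-\epsilon_0)\,\mathcal J_g(f)$, which is what I will establish. Since $X$ is $\mathbb Q$-Fano, $\kappa_P$ lies in the relative interior of $\Delta_+$, so any concave $f\in\mathcal C(\Delta_+)$ admits a supergradient $p:=\nabla f(\kappa_P)$ at $\kappa_P$ with $f(y)\le f(\kappa_P)+\langle p,y-\kappa_P\rangle$ on $\Delta_+$, and $p\in\mathcal V(G/H)$ by the definition of $\mathcal C(\Delta_+)$ (the slope condition of Proposition \ref{ZTC-classification}).

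The estimate rests on two elementary observations. (i) Integrating the supporting hyperplane inequality against $d\mu$ yields $\langle f\rangle\le f(\kappa_P)+\langle p,\mathbf b_g(\Delta_+)-\kappa_P\rangle$, i.e. $\mathcal J_g(f)=-\langle f\rangle\ge -f(\kappa_P)+\langle p,\kappa_P-\mathbf b_g(\Delta_+)\rangle$. By \eqref{bar-condition}, $\kappa_P-\mathbf b_g(\Delta_+)\in\mathrm{RelInt}\bigl(\mathcal V(G/H)^\vee\bigr)$; the face of $\mathcal V(G/H)$ annihilated by an element of this relative interior is exactly the lineality space $\mathcal V_z(G/H)=\mathcal V(G/H)\cap(-\mathcal V(G/H))$, and the defining constraint ${\rm pr}_z(p)=0$ of $\hat{\mathcal C}(\Delta_+)$ forces $p$ into the salient subcone $\mathcal V(G/H)\cap\mathcal V_z(G/H)^\perp$, on which $\kappa_P-\mathbf b_g(\Delta_+)$ is strictly positive; by homogeneity and compactness of the unit sphere there is $\delta>0$, depending only on $\Delta_+$, $g$ and $\mathcal V(G/H)$, with $\langle p,\kappa_P-\mathbf b_g(\Delta_+)\rangle\ge\delta\|p\|$. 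Since $-f(\kappa_P)\ge0$, this gives $\mathcal J_g(f)\ge -f(\kappa_P)+\delta\|p\|$. (ii) Being concave and finite on the compact $\Delta_+$, $f$ attains $\max f=0$ at some $y_0\in\Delta_+$; plugging $y=y_0$ into the supporting hyperplane inequality gives $0\le f(\kappa_P)+\langle p,y_0-\kappa_P\rangle$, hence $-f(\kappa_P)\le\langle p,y_0-\kappa_P\rangle\le C_0\|p\|$ with $C_0:=\mathrm{diam}(\Delta_+)$.

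Combining (i) and (ii): $\mathcal J_g(f)\ge -f(\kappa_P)+\delta\|p\|\ge\bigl(1+\tfrac{\delta}{C_0}\bigr)\bigl(-f(\kappa_P)\bigr)$, so $-f(\kappa_P)\le\tfrac{C_0}{C_0+\delta}\,\mathcal J_g(f)$, i.e. \eqref{D-geq-eps-J} holds with $\epsilon_0:=\tfrac{\delta}{C_0+\delta}\in(0,1)$. The degenerate case $p=0$ is immediate from (ii), since then $f(\kappa_P)=\max f$ and so $\mathcal D_g(f)=\mathcal J_g(f)\ge\epsilon_0\mathcal J_g(f)$. Crucially $\epsilon_0$ depends only on $\Delta_+$, $\kappa_P$, $g$ and $\mathcal V(G/H)$ and not on $f$, so this is the uniform bound asserted; note that the same manipulation also recovers $\mathcal D_g(f)\ge\langle p,\kappa_P-\mathbf b_g(\Delta_+)\rangle\ge\delta\|p\|$, the $g$-Ding semistability statement.

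The computations above are routine; the step that needs care is (i), namely converting ``$\kappa_P-\mathbf b_g(\Delta_+)\in\mathrm{RelInt}(\mathcal V(G/H)^\vee)$'' into the uniform lower bound $\langle p,\kappa_P-\mathbf b_g(\Delta_+)\rangle\ge\delta\|p\|$ for $p$ ranging over the (in general non-salient) cone $\mathcal V(G/H)$. This requires identifying the minimal face of $\mathcal V(G/H)$ with its lineality space $\mathcal V_z(G/H)$ and recognizing that the constraint ${\rm pr}_z(\nabla f(\kappa_P))=0$ is exactly what excises the $\mathrm{Aut}^0_G(X)$-directions (product test configurations), along which no such bound can hold, thereby confining $p$ to the salient part. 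A secondary, bookkeeping‑level point is to verify that the normalizations genuinely match — that $\mathbf b_g(\Delta_+)$ in \eqref{bar-condition} really is the barycenter of the probability measure $d\mu$ above — and that ``$\nabla f\in\mathcal V(G/H)$'' in the definition of $\mathcal C(\Delta_+)$ is meant to include all supergradients, in particular at the interior point $\kappa_P$.
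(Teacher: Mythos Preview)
Your argument is correct and takes a genuinely different route from the paper's. The paper proves \eqref{D-geq-eps-J} by contradiction and compactness: it first shows $\mathcal D_g(f)\ge 0$ on $\hat{\mathcal C}(\Delta_+)$ via the same supergradient decomposition you use, then assumes a normalized sequence $f_p$ with $\mathcal J_g(f_p)\equiv 1$ and $\mathcal D_g(f_p)\to 0$, extracts a locally uniform limit $f_\infty$, and shows the equality cases of the concavity inequality and of \eqref{bar-condition} force $f_\infty$ to be affine with gradient in $\mathcal V_z$, hence (by the normalization constraints) $f_\infty\equiv 0$; this contradicts $\mathcal J_g(f_p)\equiv 1$ since then $\mathcal D_g(f_p)\to 1$. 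By contrast, you bypass compactness entirely: the key step is to convert ``relative interior'' in \eqref{bar-condition} into the uniform bound $\langle p,\kappa_P-\mathbf b_g\rangle\ge\delta\|p\|$ on the salient cone $\mathcal V(G/H)\cap\mathcal V_z(G/H)^\perp$, then combine the averaged supergradient inequality with the pointwise one at a maximizer $y_0$ to get the explicit constant $\epsilon_0=\delta/(C_0+\delta)$. Your approach yields an effective $\epsilon_0$ in terms of $\mathrm{diam}(\Delta_+)$ and the distance from $\kappa_P-\mathbf b_g$ to the boundary of $\mathcal V(G/H)^\vee$, which the paper's soft argument does not; the paper's method, on the other hand, requires no inner product (only the duality pairing) and no identification of the salient part. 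Both proofs rely on the same two ingredients at bottom---concavity at $\kappa_P$ and the relative-interior hypothesis---and both implicitly assume that ``$\nabla f(\kappa_P)$'' makes sense (or that the condition in $\hat{\mathcal C}(\Delta_+)$ is imposed on a chosen supergradient), so your caveat at the end is apt and applies equally to the paper.
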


\begin{proof}
Denote by ${\rm pr}_z^\perp={\rm Id}-{\rm pr}_z$.  Then
\begin{align*}
V_g\cdot\mathcal D_g(f)=&\int_{\Delta_+}(f(\kappa_P)-f)g\pi\,dy\notag\\
=&\int_{\Delta_+}(f(\kappa_p)-f+\nabla f(\kappa_P)(y-\kappa_P))g\pi\,dy\notag\\
&-\int_{\Delta_+}({\rm pr}_z(\nabla f(\kappa_P))(y-\kappa_P))g\pi\,dy\notag\\
&-\int_{\Delta_+}({\rm pr}_z^\perp(\nabla f(\kappa_P))(y-\kappa_P))g\pi\,dy
\end{align*}

By \eqref{bar-condition}, the second term
$$\int_{\Delta_+}({\rm pr}_z(\nabla f(\kappa_P))(y-\kappa_P))g\pi\,dy=V_g\cdot({\rm pr}_z(\nabla f(\kappa_P))({\mathbf b}(\Lambda_0)-\kappa_P))=0,$$
and the third term
\begin{align}\label{third-term}
-\int_{\Delta_+}({\rm pr}_z^\perp(\nabla f(\kappa_P))(y-\kappa_P))g\pi\,dy=-V_g\cdot({\rm pr}_z^\perp(\nabla f(\kappa_P))({\mathbf b}(\Lambda_0)-\kappa_P))\geq0,
\end{align}
since $-{\rm pr}_z^\perp(\nabla f(\kappa_P)\in(-\mathcal V(G/H))$.
By concavity
\begin{align}\label{first-term}
f(\kappa_p)-f+\nabla f(\kappa_P)(y-\kappa_P)\geq0.
\end{align}
Thus
\begin{align}\label{Df-non-neg}
V_g\cdot\mathcal D_g(f)\geq0,~\forall f\in\hat{\mathcal C}(\Delta_+).
\end{align}

Suppose that \eqref{D-geq-eps-J} is not true. 
There is a sequence $\{f_p\}_{p=1}^{+\infty}\subset\hat{\mathcal C}(\Delta_+)$ so that 
\begin{align}
V_g\cdot\mathcal J_g(f_p)=&\int_{\Delta_+}(-f_p)g\pi \,dy\equiv1,\label{Jf_p}\\
\lim_{p\to+\infty}\mathcal D_g(f_p)=&0.\label{Df_p}
\end{align}
By \eqref{Jf_p} and $f_p\leq0$, up to passing to a subsequence, $f_p$ converges locally uniformly to some $f_\infty\in\hat{\mathcal C}(\Delta_+)$. Combining with \eqref{Df-non-neg}, we have
$$0\leq\mathcal D_g(f_\infty)\leq\underline{\lim}_{p\to+\infty}\mathcal D_g(f_p)=0.$$
By \eqref{third-term}-\eqref{first-term}, we see that
$$f_\infty(y)=\zeta(y)+C$$
for some $\zeta\in\mathcal V_z(G/H)$ and $C\in\mathbb R$. But since $f_\infty\in\hat {\mathcal C}(\Delta_+)$,
$${\rm pr}_z(\nabla f_\infty(\kappa_P))=\zeta=0~\text{and}~C=0.$$
Hence $f_\infty\equiv0$.

Since
$$\lim_{p\to+\infty}f_p(\kappa_P)=f_\infty(\kappa_P)=0,$$
by \eqref{Jf_p},
$$V_g\cdot \mathcal D_g(f_p)=V_g\cdot f_p(\kappa_P)-\int_{\Delta_+}f_pg\pi\,dy\to1,~p\to+\infty.$$
A contradiction to \eqref{Df_p}. Hence \eqref{D-geq-eps-J} is true.
\end{proof}

\begin{proof}[Proof of Theorem \ref{stab-criterion-Fano}]
The direction (1)$\Rightarrow$(2): Note that if a $G$-equivariant normal test configuration $(\mathcal X,\mathcal L)$ of $(X,-K_X)$ is associated to $f$. Then the twist of $(\mathcal X,\mathcal L)$ by an $\exp(\Lambda)$-action with $\Lambda\in\mathcal V_{z\mathbb R}(G/H)\cong\mathfrak{aut}_G(X)$ is associated to $f_\Lambda(\lambda):=f(\lambda)-\Lambda(\lambda)$.
Also note that all the NA-functionals is invariant if we add to $f$ a constant. The direction is then a combination of \eqref{JNA-monomial} and Lemma \ref{uni-D-stab-lem}.

The direction (2)$\Rightarrow$(3) follows directly from Proposition \ref{Ding-Mabuchi};

The direction (3)$\Rightarrow$(4) is trivial;

The direction (4)$\Rightarrow$(1): Suppose that \eqref{bar-condition} fails. Then as in \cite[Lemma 2.4]{LL-IMRN-2021} one can construct a non-product $G$-equivariant normal test configuration $(\mathcal X,\mathcal L)$ of $(X,-K_X)$ so that $${\rm D}^{\rm NA}_g(\mathcal X,\mathcal L)={\rm M}^{\rm NA}_g(\mathcal X,\mathcal L)\leq0.$$
A contradiction to (4).
\end{proof}

\section{Appendix: Some Lemmas on $(X^{[\mathbf k]},L^{[\mathbf k]})$}

\subsection{The dimension of ${\rm H}^0(X^{[\mathbf k]},L^{[\mathbf k]})$}
\begin{lem}\label{dim-H0-Lk-lem}
Suppose that $(X,L)$ is polarized variety. Let $T\subset {\rm Aut}(M)$ be a torus that acts on $L$ through some lifting $\sigma$. Then on $(X^{[\mathbf k]},L^{\sigma[\mathbf k]})$ it holds
\begin{align}\label{dim-H0-Lk}
\dim{\rm H}^0(X^{[\mathbf k]},L^{\sigma[\mathbf k]})=\sum_{\lambda\in\mathfrak X(T)}\dim{\rm H}^0(X,L)^{(T)}_\lambda \prod_{A=1}^r\dim\mathcal O_{\mathbb P^{k_A}}(\lambda_A),
\end{align}
where $\mathbf\lambda=(\lambda_1,...,\lambda_r)\in\mathfrak X(T)\cong\mathbb Z^r$. In particular, if each $\lambda$ with ${\rm H}^0(X,L)^{(T)}_\lambda\not=O$ lies in $\mathbb N^r$, then
\begin{align}\label{dim-H0-Lk-ample}
\dim{\rm H}^0(X^{[\mathbf k]},L^{\sigma[\mathbf k]})=\sum_{\lambda\in\mathfrak X(T)}\dim{\rm H}^0(X,L)^{(T)}_\lambda C^{\mathbf k}_{\mathbf{k+\lambda}}.
\end{align}
\end{lem}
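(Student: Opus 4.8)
The statement concerns the dimension of global sections of the fibre-bundle construction $(X^{[\mathbf k]},L^{\sigma[\mathbf k]})$, and the natural approach is to decompose everything into $T$-isotypic (weight) pieces and then reduce the problem, fibrewise over $\mathbb P^{[\mathbf k]}$, to the classical formula for sections of a twisting sheaf on a product of projective spaces. First I would recall the construction from Section 2.1, Step-1: $X^{[\mathbf k]} = (X\times(\mathbb C^{\mathbf k+\mathbf 1}\setminus\{O\}))/\mathbb T$, where $\mathbb T\cong T$ acts diagonally, and $L^{\sigma[\mathbf k]}$ is the descent of $\mathrm{pr}_X^*L$ (with the $\sigma$-linearization) twisted appropriately. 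A section $s\in{\rm H}^0(X^{[\mathbf k]},L^{\sigma[\mathbf k]})$ pulls back to a section $\tilde s$ of $\mathrm{pr}_X^*L$ over $X\times(\mathbb C^{\mathbf k+\mathbf 1}\setminus\{O\})$ that is $\mathbb T$-equivariant with respect to the given lift of the $\mathbb T$-action; conversely every such $\mathbb T$-equivariant section descends. Since $\mathbb C^{\mathbf k+\mathbf 1}\setminus\{O\}$ has codimension-$\geq 2$ complement (each $\mathbb C^{k_A+1}$ has $k_A+1\geq 1$, but the origin removed from the product has codimension $\geq 2$ once $r\geq 1$, or one argues factor by factor), the section extends over all of $\mathbb C^{\mathbf k+\mathbf 1}$ and is therefore polynomial in the $z^{(A),i_A}$ coordinates.

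The key step is then a bookkeeping of weights. Expand such a $\mathbb T$-equivariant section as $\tilde s = \sum_{\mathbf m} s_{\mathbf m}(x)\,\prod_{A,i_A} (z^{(A),i_A})^{m_{A,i_A}}$ with $s_{\mathbf m}\in{\rm H}^0(X,L)$. The $\mathbb T$-equivariance condition, together with the fact that the $A$-th factor $\mathbb T$-component acts on $z^{(A),i_A}$ by scaling, forces: (i) each $s_{\mathbf m}$ lies in the $T$-weight space ${\rm H}^0(X,L)^{(T)}_\lambda$ for a fixed $\lambda$, and (ii) $\sum_{i_A=0}^{k_A} m_{A,i_A} = \lambda_A$ for each $A$. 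Thus the choices of exponent multi-indices $\mathbf m$ compatible with a given $\lambda$ are in bijection with $\prod_{A=1}^r \{$monomials of degree $\lambda_A$ in $k_A+1$ variables$\}$, whose count is exactly $\prod_{A=1}^r \dim\mathcal O_{\mathbb P^{k_A}}(\lambda_A) = \prod_{A=1}^r \binom{k_A+\lambda_A}{k_A}$ when all $\lambda_A\geq 0$ (and is $0$ if some $\lambda_A<0$, since then no monomial of that degree exists — consistent with $\dim\mathcal O_{\mathbb P^{k_A}}(\lambda_A)=0$ for $\lambda_A<0$). Summing over $\lambda\in\mathfrak X(T)$ gives \eqref{dim-H0-Lk}, and restricting to $\lambda\in\mathbb N^r$ gives \eqref{dim-H0-Lk-ample} with $C^{\mathbf k}_{\mathbf{k+\lambda}}=\prod_A\binom{k_A+\lambda_A}{k_A}$.

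Alternatively — and this is probably the cleaner write-up — I would invoke the push-forward computation already used in the proof of Theorem~\ref{Fut-g-smooth-thm}: with $\mathrm{pr}^{[\mathbf k]}: X^{[\mathbf k]}\to\mathbb P^{[\mathbf k]}$ the bundle projection, one has $\mathrm{pr}^{[\mathbf k]}_*\mathcal O_{X^{[\mathbf k]}}(L^{\sigma[\mathbf k]}) = \bigoplus_{\lambda\in\mathfrak X(T)} {\rm H}^0(X,L)^{(T)}_\lambda \otimes \bigboxtimes_{A=1}^r \mathcal O_{\mathbb P^{k_A}}(\lambda_A)$, exactly as in the $\chi(\mathbb P^1,\overline{\mathrm{pr}}_*\cdots)$ calculation via \cite[Proposition 1.3]{Boucksom-Hisamoto-Jonsson}. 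Taking $H^0$ on $\mathbb P^{[\mathbf k]}$ and using $\dim{\rm H}^0(\mathbb P^{[\mathbf k]}, \boxtimes_A\mathcal O_{\mathbb P^{k_A}}(\lambda_A)) = \prod_A \dim\mathcal O_{\mathbb P^{k_A}}(\lambda_A)$ (by Künneth, with the convention that the line bundle has no sections when some $\lambda_A<0$) yields \eqref{dim-H0-Lk} immediately; the Leray spectral sequence is unnecessary here since $\mathrm{pr}^{[\mathbf k]}_*$ on the $\lambda$-piece is a bundle with higher cohomology not entering $H^0$.

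The main obstacle I anticipate is purely notational rather than conceptual: one must be careful that the $\sigma$-linearization (as opposed to the canonical lift) is precisely what makes the weights $\lambda$ rather than shifted weights appear in the decomposition, and that the $\mathbb T$-action on the $z$-coordinates and on $X$ are correctly matched through the isomorphism $\iota:\mathbb T\to T$. A second minor point is the codimension-$2$ (Hartogs) argument for extending sections across $\{O\}\subset\mathbb C^{\mathbf k+\mathbf 1}$; this is standard but should be stated, since it is what guarantees that sections are genuinely polynomial and that the weight count is finite. No serious estimate or geometric input is required beyond what is already assembled in the paper.
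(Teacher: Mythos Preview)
Your proposal is correct and your first approach is essentially identical to the paper's proof: pull back a section of $L^{\sigma[\mathbf k]}$ to a $\mathbb T$-invariant section on $X\times(\mathbb C^{\mathbf k+\mathbf 1}\setminus\{O\})$, expand against a basis of the $T$-weight spaces ${\rm H}^0(X,L)^{(T)}_\lambda$, and observe that the coefficient functions must be $\mathbb T$-semiinvariant of weight $\lambda$, hence descend to sections of $\boxtimes_A\mathcal O_{\mathbb P^{k_A}}(\lambda_A)$. The paper phrases this descent directly rather than via an explicit Hartogs/monomial expansion, and does not spell out your push-forward alternative, but the underlying argument is the same.
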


\begin{proof}
Since $T$ acts on $L$,
\begin{align*}
{\rm H}^0(X,L)=\bigoplus_{\lambda\in\mathfrak X(T)}{\rm H}^0(X,L)^{(T)}_\lambda.
\end{align*}
We fix a basis $\{s_{(\lambda)i}\}_{i=1}^{d_\lambda}$ of each ${\rm H}^0(X,L)^{(T)}_\lambda$.

On the other hand, suppose that $\bar s\in{\rm H}^0(X^{[\mathbf k]},L^{\sigma[\mathbf k]})$. Then the pull-back $\tilde s$ of $\bar s$ is a $T$-invariant section of $L\times(\mathbb C^{\mathbf k+\mathbf 1}\setminus\{O\})$. We can write
\begin{align*}
\tilde s=\sum_{\lambda\in\mathfrak X(T)}\sum_{i=1}^{d_\lambda}s_{(\lambda)i}f_{(\lambda)i},
\end{align*}
where each $f_{(\lambda)i}\in\mathbb C[\mathbb C^{\mathbf k+\mathbf 1}\setminus\{O\}]$. Since $\cup_{\lambda\in\mathfrak X(T)}\{s_{(\lambda)i}\}_{i=1}^{d_\lambda}$ is a basis of ${\rm H}^0(X,L)$, each $s_{(\lambda)i}f_{(\lambda)i}$ must be $T$-invariant. Consequently, $f_{(\lambda)i}$ descenders to a section $\bar f_{(\lambda)i}\in\otimes_{A=1}^r \mathcal O_{\mathbb P ^{k_A}}(\lambda_A)$. Hence we get \eqref{dim-H0-Lk}. \eqref{dim-H0-Lk-ample} follows from \eqref{dim-H0-Lk} and the identity (\cite[Chapter 2, Section 2.5]{Danilov-Itogi}),
\begin{align*}
\chi(\mathbb P^k,\mathcal O_{\mathbb P^k}(\lambda))=\frac1{k!}(\lambda+1)...(\lambda+k).
\end{align*}

\end{proof}

\subsection{The canonical divisor of $X^{[\mathbf k]}$}
\begin{lem}\label{-K_{X^k}-lem}
Suppose that $X$ is $\mathbb Q$-Fano variety. Let $T\subset {\rm Aut}(M)$ be a torus that acts on $-K_X$ through the canonical lifting $\sigma_0$. Then it holds
\begin{align*}
-K_{X^{[\mathbf k]}}=(-K_X)^{\sigma_0[\mathbf k]}+\sum_{A=1}^r{\rm pr}_A^*\mathcal O_{\mathbb P^{k_A}}(k_A+1).
\end{align*}
on the variety $(X^{[\mathbf k]},(-K_X)^{\sigma_0[\mathbf k]})$.
\end{lem}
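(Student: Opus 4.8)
The plan is to compute the canonical divisor of $X^{[\mathbf k]}$ by using the fibration structure ${\rm pr}^{[\mathbf k]}:X^{[\mathbf k]}\to\mathbb P^{[\mathbf k]}$ together with the explicit local coordinates produced in the proof of Proposition~\ref{Xk-Lk-structure}. Recall that $X^{[\mathbf k]}=(X\times(\mathbb C^{\mathbf k+\mathbf 1}\setminus\{O\}))/\mathbb T$, so it is a locally trivial $X$-bundle over $\mathbb P^{[\mathbf k]}=\prod_{A=1}^r\mathbb P^{k_A}$, and the twisting of the bundle is governed precisely by the $\mathbb T$-action, which in turn is the lifting $\sigma_0$ of the $T$-action on $-K_X$. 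The first step is to write down the relative canonical sheaf: since each fibre of ${\rm pr}^{[\mathbf k]}$ is isomorphic to $X$, one has $K_{X^{[\mathbf k]}/\mathbb P^{[\mathbf k]}}$ is, fibrewise, $K_X$, but as a line bundle on the total space it is the bundle associated to $K_X$ with the $\mathbb T$-linearization coming from the canonical lifting of $-K_X$ dualized; that is, $-K_{X^{[\mathbf k]}/\mathbb P^{[\mathbf k]}}=(-K_X)^{\sigma_0[\mathbf k]}$. This is exactly the same ``associated bundle'' construction that defines $L^{\sigma_0[\mathbf k]}$ in Section~2.1, Step-1, applied to $L=-K_X$.

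The second step is the relative cotangent sequence for the smooth fibration ${\rm pr}^{[\mathbf k]}$: on the regular locus (which suffices since $X^{[\mathbf k]}$ is normal and the singular locus has codimension $\ge 2$, by the covering $\{X^{[\mathbf k]}_{[i]}\}$ constructed in the proof of Proposition~\ref{Xk-Lk-structure}) we have the exact sequence $0\to ({\rm pr}^{[\mathbf k]})^*\Omega^1_{\mathbb P^{[\mathbf k]}}\to\Omega^1_{X^{[\mathbf k]}}\to\Omega^1_{X^{[\mathbf k]}/\mathbb P^{[\mathbf k]}}\to 0$, whence, taking top exterior powers,
\begin{align*}
K_{X^{[\mathbf k]}}=({\rm pr}^{[\mathbf k]})^*K_{\mathbb P^{[\mathbf k]}}+K_{X^{[\mathbf k]}/\mathbb P^{[\mathbf k]}}.
\end{align*}
Now $K_{\mathbb P^{[\mathbf k]}}=\boxtimes_{A=1}^r\mathcal O_{\mathbb P^{k_A}}(-(k_A+1))$ by the Euler sequence on each factor, so $({\rm pr}^{[\mathbf k]})^*K_{\mathbb P^{[\mathbf k]}}=-\sum_{A=1}^r{\rm pr}_A^*\mathcal O_{\mathbb P^{k_A}}(k_A+1)$ in the notation of the lemma. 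Combining with the first step gives
\begin{align*}
-K_{X^{[\mathbf k]}}=(-K_X)^{\sigma_0[\mathbf k]}+\sum_{A=1}^r{\rm pr}_A^*\mathcal O_{\mathbb P^{k_A}}(k_A+1),
\end{align*}
as claimed.

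The step I expect to be the main obstacle is justifying cleanly that $-K_{X^{[\mathbf k]}/\mathbb P^{[\mathbf k]}}$ is $(-K_X)^{\sigma_0[\mathbf k]}$ \emph{with the canonical lifting} rather than some other $\mathbb T$-linearization: this is where the particular choice of lifting $\sigma_0$ enters, and one must check that the $\mathbb T$-action on the relative dualizing sheaf induced by the diagonal $\mathbb T$-action on $X\times(\mathbb C^{\mathbf k+\mathbf 1}\setminus\{O\})$ agrees with the action used to form $(-K_X)^{\sigma_0[\mathbf k]}$. Concretely, in the chart $X^{[\mathbf k]}_{[i]}\cong X\times\mathcal U_{[i]}$ with coordinates $\zeta^{(B),j_B}_{[i]}=z^{(B),j_B}/z^{(B),i_B}$, one writes the transition functions for $K_{X^{[\mathbf k]}/\mathbb P^{[\mathbf k]}}$ between two such charts and matches them to the cocycle defining the associated bundle of $-K_X$ under $\sigma_0$; the weight shift is precisely the $\tilde\lambda_0$-type contribution appearing in \eqref{K_X/P-log-k-Weil-div}. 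Alternatively — and perhaps more cleanly — one can avoid linearization bookkeeping entirely by noting that both sides of the asserted identity restrict to $K_X^{-1}$ on each fibre and have the same class in $\operatorname{Pic}(X^{[\mathbf k]})/({\rm pr}^{[\mathbf k]})^*\operatorname{Pic}(\mathbb P^{[\mathbf k]})$, and then pinning down the $\operatorname{Pic}(\mathbb P^{[\mathbf k]})$-component by restricting to a fibre of $X^{[\mathbf k]}\to X$ (a product of projective spaces), where the relative canonical bundle of $X^{[\mathbf k]}\to\mathbb P^{[\mathbf k]}$ becomes trivial and the computation reduces to $K_{\mathbb P^{[\mathbf k]}}$ plus the contribution of $(-K_X)^{\sigma_0[\mathbf k]}$ along that fibre, which is $\sum_A{\rm pr}_A^*\mathcal O_{\mathbb P^{k_A}}(k_A+1)$ by the definition of the canonical lifting in Section~3.1.3.
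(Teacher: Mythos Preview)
Your approach is correct and genuinely different from the paper's. The paper argues entirely at the level of explicit sections and divisors: it picks a $T$-semiinvariant section $s_0\in{\rm H}^0(X,-K_X)^{(T)}_{\chi_0}$, multiplies by monomials in the $z^{(A),i_A}$ to produce $\mathbb T$-invariant sections of $-K_X\times(\mathbb C^{\mathbf k+\mathbf 1}\setminus\{O\})$ that descend to sections of $(-K_X)^{\sigma_0[\mathbf k]}$ with divisor $\mathfrak d_{s_0}^{[\mathbf k]}+\sum_A{\rm pr}_A^*\mathcal O_{\mathbb P^{k_A}}(\chi_{0A})$, and then separately writes down the local frames $e_{[i]}=dx_{[i]}\wedge d\zeta_{[i]}$ for $-K_{X^{[\mathbf k]}}$, checks their transition functions, observes that the induced $T$-action on the $e_{[i]}$ is the canonical lifting, and produces a divisor of $-K_{X^{[\mathbf k]}}$ that differs from the previous one exactly by $\sum_A{\rm pr}_A^*\mathcal O_{\mathbb P^{k_A}}(k_A+1)$.

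You instead invoke the relative cotangent sequence for the locally trivial fibre bundle ${\rm pr}^{[\mathbf k]}:X^{[\mathbf k]}\to\mathbb P^{[\mathbf k]}$ and the Euler sequence on $\mathbb P^{[\mathbf k]}$. This is cleaner and more conceptual; the paper's computation of transition functions for $e_{[i]}$ is effectively a verification of the relative cotangent sequence in coordinates. The point you correctly flag as the crux---that $-K_{X^{[\mathbf k]}/\mathbb P^{[\mathbf k]}}=(-K_X)^{\sigma_0[\mathbf k]}$ with the \emph{canonical} linearization---is exactly the content of the paper's sentence ``the induced action of $T$ on each $e_{[i]}$ gives the canonical lifting $\sigma_0$''; your associated-bundle justification (the $\mathbb T$-action on $K_X$ induced by the diagonal action on $X\times(\mathbb C^{\mathbf k+\mathbf 1}\setminus\{O\})$ is by definition the canonical one on top forms) is the right abstract reason behind that sentence. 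Your remark that it suffices to work on the regular locus since $X^{[\mathbf k]}$ is normal is also the correct way to handle the singularities of $X$; the paper implicitly does the same by passing to a multiple $-m_0K_X$ and working with Weil divisors. The alternative ``pin down the $\operatorname{Pic}(\mathbb P^{[\mathbf k]})$-component by restriction'' route you sketch at the end is unnecessary once the associated-bundle argument is made precise.
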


\begin{proof}
Without loss of generality we may assume that $-K_X$ is Cartier. Otherwise we replace $-K_X$ by some $-m_0K_X, ~m_0\in\mathbb N_+$. As $T$ acts on ${\rm H}^0(X,-K_X)$, there is a section $s_0\in{\rm H}^0(X,-K_X)_{\chi_0}^{(T)}$. That is,
$$t\cdot s(t^{-1}x)=t^{\chi_0}s(x),~\forall x\in X~\text{and}~t\in T.$$

By definition, the pull back of any section $\bar s\in{\rm H}^0(X^{[\mathbf k]},(-K_X)^{\sigma_0[\mathbf k]})$ on $X\times\mathbb C^{\mathbf k+\mathbf 1}$ is a $T$-invariant section of $-K_X\times\mathbb C^{\mathbf k+\mathbf 1}$, and any such section descend to a section in ${\rm H}^0(X^{[\mathbf k]},(-K_X)^{\sigma_0[\mathbf k]})$. Set $\tilde s_0=s_0\times\mathbb C^{\mathbf k+\mathbf 1}$. Then any $$\tilde s_{0,\mu}:=\tilde s_0\prod_{A=1}^r(z^{(A),0})^{\mu_{A,0}}...(z^{(A),k_A})^{\mu_{A,k_A}}$$
with $\mu_{A,0}+...+\mu_{A,k_A}=\chi_{0A}$ for each $A=1,...,r$ is a $T$-invariant section of $-K_X\times\mathbb C^{\mathbf k+\mathbf 1}$, that descends to a section $s_{0,\mu}$ of $(-K_X)^{\sigma_0[\mathbf k]}$. Clearly, the divisor of $\tilde s_{0,\mu}$ is $T$-invariant and descends to the divisor $\mathfrak d_{s_{0,\mu}}$ of $s_{0,\mu}$. Denote by $\mathfrak d_{s_0}\subset X$ the divisor of $s_0$ in $X$, which is also $T$-invariant, we have
\begin{align}\label{div-(-K_X)^k}
\mathfrak d_{s_{0,\mu}}=\mathfrak d_{s_0}^{[\mathbf k]}+\sum_{A=1}^r{\rm pr}_A^*\mathcal O_{\mathbb P^{k_A}}(\chi_{0A}).
\end{align}

On the other hand, recall the local coordinate charts $\mathcal U_{[i]}$ with each $i_A\in\{0,...,k_A\}$ constructed in Section 3.2, and the local coordinates $(x_{[i]},\zeta_{[i]})$ on it. It is direct to check that on each $\mathcal U_{[i]}\cap U_{[j]}$,
$$x_{[i]}=(\zeta_{[i]}^j)\cdot x_{[j]},~\zeta_{[j]}=\zeta_{[j]}(\zeta_{[i]}).$$
In particular, $\zeta_{[j]}$ is a function that depends only on $\zeta_{[i]}$. Thus under the local section $$e_{[i]}=(dx_{[i]}\wedge d\zeta_{[i]}=dx_{[1]}^1\wedge...\wedge dx_{[i]}^n)\bigwedge\wedge _{A=1}^r(d\zeta_{[i]}^{(A),1}\wedge...\wedge d\zeta^{(A),k_A})$$
of $-K_{X^{[\mathbf k]}}$, the transition is given by
$$e_{[i]}=\det(\frac{\partial x_{[i]}}{\partial x_{[j]}})\det(\frac{\partial \zeta_{[i]}}{\partial \zeta_{[j]}})e_{[j]}.$$
Clearly, the induced action of $T$ on each $e_{[i]}$ gives the canonical lifting $\sigma_0$. Hence, the pull-back of a section $s'_0$ of $-K_{X^{[\mathbf k]}}$ on $X\times(\mathbb C^{\mathbf k+\mathbf 1}\setminus\{O\})$ is a $T$-invariant section of $-K_{X\times(\mathbb C^{\mathbf k+\mathbf 1}\setminus\{O\})}$. As before, from $s_0$ we can construct a divisor
\begin{align*}
\mathfrak d_{s'_{0,\mu}}=\mathfrak d_{s_0}^{[\mathbf k]}+\sum_{A=1}^r{\rm pr}_A^*\mathcal O_{\mathbb P^{k_A}}(\chi_{0A})+\sum_{A=1}^r{\rm pr}_A^*\mathcal O_{\mathbb P^{k_A}}(k_{A}+1)
\end{align*}
of $-K_{X^{[\mathbf k]}}$. Combining with \eqref{div-(-K_X)^k} we get the Lemma.

\end{proof}

\begin{rem}
Let $X$ be a $\mathbb Q$-Fano $G$-spherical variety. Then there always exists a $B$-semiinvariant section $s$ of $-K_X$ with divisor ${\mathfrak d}$ given by \eqref{weil-div} and $B$-weight $\kappa_P$ (cf. \cite[Theorem 1.2]{Ga-Ho}). By \eqref{chi-0-can}, $s_0$ is $T$-invariant under the canonical lifting of the $T$-action and we get a divisor
$$\overline{\mathfrak d}={\mathfrak d}^{[\mathbf k]}+\sum_{A=1}^r{\rm pr}_A^*\mathcal O_{\mathbb P^{k_A}}(k_{A}+1)$$
of $-K_{X^{[\mathbf k]}}$. This is precisely the divisor we get when applying \cite[Theorem 1.2]{Ga-Ho} to the variety $X^{[\mathbf k]}$.
\end{rem}

\subsection{An intersection formula}

\begin{lem}\label{intersection-integral}
Let $X$ be a projective variety with effective $T$-action and $L$ a $T$-linearized ample line bundle on it. Let $\Delta_+(X,L)$ be the moment polytope and $g$ a monomial \eqref{monom-g} on it. Denote by ${\rm DH}_T(X,L)$ the corresponding Duistermaat-Heckman measure. Then
$$\frac{\mathbf k!}{(n+|\mathbf k|)!}(L^{[\mathbf k]})^{n+|\mathbf k|}=\int_{\Delta_+(X,L)}g(\lambda){\rm DH}_T(X,L)(\lambda).$$
\end{lem}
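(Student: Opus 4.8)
The plan is to reduce the intersection number on the fibre-product variety $(X^{[\mathbf k]},L^{[\mathbf k]})$ to an integral over its moment polytope, and then recognize that integral as a weighted integral over $\Delta_+(X,L)$. First I would recall the general asymptotic Riemann--Roch principle: for an ample line bundle $L^{[\mathbf k]}$ on the $(n+|\mathbf k|)$-dimensional variety $X^{[\mathbf k]}$, one has
\begin{align*}
\dim {\rm H}^0(X^{[\mathbf k]}, (L^{[\mathbf k]})^m)=\frac{(L^{[\mathbf k]})^{n+|\mathbf k|}}{(n+|\mathbf k|)!}m^{n+|\mathbf k|}+O(m^{n+|\mathbf k|-1}),\qquad m\to+\infty.
\end{align*}
On the other hand, by Lemma \ref{dim-H0-Lk-lem} (in its ample form \eqref{dim-H0-Lk-ample}, applicable after choosing a lifting so the relevant weights lie in $\mathbb N^r$), we have
\begin{align*}
\dim {\rm H}^0(X^{[\mathbf k]}, (L^{[\mathbf k]})^m)=\sum_{\lambda\in\mathfrak X(T)}\dim{\rm H}^0(X,mL)^{(T)}_\lambda\, C^{\mathbf k}_{\mathbf k+\lambda},
\end{align*}
where each $C^{\mathbf k}_{\mathbf k+\lambda}=\prod_{A=1}^r\binom{k_A+\lambda_A}{k_A}$ is a polynomial in $\lambda$ whose top-degree part is $\lambda^{\mathbf k}/\mathbf k!=g(\lambda)/\mathbf k!$.

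The key step is to match leading terms. Since the Duistermaat--Heckman measure ${\rm DH}_T(X,L)$ is by definition the limit of the normalized weight-counting measures $m^{-n}\sum_\lambda \dim{\rm H}^0(X,mL)^{(T)}_\lambda\,\delta_{\lambda/m}$ as $m\to+\infty$, substituting the leading behaviour $C^{\mathbf k}_{\mathbf k+\lambda}\sim \frac{1}{\mathbf k!}\lambda^{\mathbf k}=\frac{m^{|\mathbf k|}}{\mathbf k!}g(\lambda/m)$ gives
\begin{align*}
\sum_{\lambda}\dim{\rm H}^0(X,mL)^{(T)}_\lambda\,C^{\mathbf k}_{\mathbf k+\lambda}=\frac{m^{n+|\mathbf k|}}{\mathbf k!}\int_{\Delta_+(X,L)}g(\lambda)\,{\rm DH}_T(X,L)(\lambda)+O(m^{n+|\mathbf k|-1}).
\end{align*}
Comparing the two expressions for the leading coefficient in $m$ then yields the claimed identity. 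I would handle the change-of-lifting issue exactly as in the proof of Proposition \ref{NA-functional}: both sides transform compatibly under $L\rightsquigarrow L^\sigma$ (the left side via \eqref{change-of-L}, the right side because ${\rm DH}_T$ just translates and $g$ is replaced by $g(\cdot-\chi)$), so it suffices to prove the formula after passing to a lifting for which $L^{[\mathbf k]}$ is ample and \eqref{dim-H0-Lk-ample} applies.

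The main obstacle I anticipate is the lower-order-term bookkeeping: one must be sure that the error incurred by replacing the binomial polynomial $C^{\mathbf k}_{\mathbf k+\lambda}$ by its leading monomial, summed against $\dim{\rm H}^0(X,mL)^{(T)}_\lambda$ (which grows like $m^n$ and is supported on $\lambda$ of size $O(m)$), is genuinely $O(m^{n+|\mathbf k|-1})$ rather than contaminating the top coefficient. This follows because the sub-leading terms of $C^{\mathbf k}_{\mathbf k+\lambda}$ have degree $\le |\mathbf k|-1$ in $\lambda$, contributing $O(m^{n+|\mathbf k|-1})$ after summation, and because the empirical measures $m^{-n}\sum\dim{\rm H}^0(X,mL)^{(T)}_\lambda\,\delta_{\lambda/m}$ converge weakly to ${\rm DH}_T(X,L)$ with the convergence of moments guaranteed by the polynomiality of the Hilbert function. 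One could alternatively phrase the whole argument through the equivariant Riemann--Roch / Euler--Maclaurin formalism already invoked in the proof of Proposition \ref{mod-futaki}, but the weight-counting comparison above is the cleanest route and avoids introducing further machinery.
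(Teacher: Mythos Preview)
Your argument is correct and takes a genuinely different route from the paper's. The paper proves the lemma by taking a $T$-equivariant resolution $\varphi:\tilde X\to X$, observing that both $\Delta_+$ and ${\rm DH}_T$ are unchanged under $\varphi$, invoking \cite[Section~2]{Han-Li-KRS} on the smooth $\tilde X$ (where the identity is a symplectic computation: the moment-map pushforward of $(\omega+\sum_A\omega_{{\rm FS};A})^{n+|\mathbf k|}/(n+|\mathbf k|)!$ produces the factor $g(\lambda)/\mathbf k!$ from integrating out the simplices), and then using $\deg\varphi=1$ to transfer the intersection number back to $X$. Your approach instead stays on $X$ and reads off the top intersection number from the leading coefficient of $\dim{\rm H}^0(X^{[\mathbf k]},mL^{[\mathbf k]})$, which you compute via Lemma~\ref{dim-H0-Lk-lem} and the weak convergence of the rescaled weight measures to ${\rm DH}_T$. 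This is more self-contained (it uses only tools already in the paper) and in fact is precisely the mechanism behind the computations \eqref{first-term-in-S1}--\eqref{error-term} in the proof of Theorem~\ref{Fut-g-smooth-thm}; the paper's resolution argument is shorter because it outsources the key step to \cite{Han-Li-KRS}. One small remark: for the comparison of leading coefficients you do not actually need the $O(m^{n+|\mathbf k|-1})$ error rate on the weight side, only that the sub-leading binomial terms are $o(m^{n+|\mathbf k|})$ and that weak convergence of compactly supported measures implies convergence of polynomial moments---so the ``polynomiality of the Hilbert function'' justification, while true, is stronger than necessary.
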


\begin{proof}
Consider a $T$-invariant resolution $\varphi:\tilde X\to X$ and $\tilde L:=\varphi^*L$ on it. Let $\omega\in2\pi c_1(L)$ be a K\"ahler form on $X$. Then $\tilde\omega:=\varphi^*\omega$ is a semi-positive closed $(1,1)$-form on $\tilde X$. The moment maps of the $T$-actions on $(X,L,\omega)$ and $(\tilde X,\tilde L, \tilde\omega)$ are related by:
$$\tilde {\mathbf m}_{\tilde \omega}={\mathbf m}_\omega\circ\varphi.$$
As a consequence, $\Delta_+(X, L)=\Delta_+(\tilde X,\tilde L)$ and
$${\rm DH}_T(X,L)={\rm DH}_T(\tilde X,\tilde L).$$
It is well-known that both $\Delta_+(\tilde X,\tilde L)$ and ${\rm DH}_T(\tilde X,\tilde L)$ are independent with the choice of moment map. Thus by choosing a smooth $\tilde\omega'\in2\pi c_1(\tilde L)$ and using $\tilde{\mathbf m}_{\tilde\omega'}$, it follows from \cite[Section 2]{Han-Li-KRS} that
\begin{align*}
\frac{\mathbf k!}{(n+|\mathbf k|)!}(\tilde L^{[\mathbf k]})^{n+|\mathbf k|}=&\int_{\Delta_+(\tilde X,\tilde L)}g(\lambda){\rm DH}_T(\tilde X,\tilde L)(\lambda)\\
=&\int_{\Delta_+(X,L)}g(\lambda){\rm DH}_T(X,L)(\lambda).
\end{align*}
On the other hand, $\varphi$ is a generically finite surjective proper map and $\deg(\varphi)=1$. Hence $(L^{[\mathbf k]})^{n+|\mathbf k|}=(\tilde L^{[\mathbf k]})^{n+|\mathbf k|}$ and we get the Lemma.
\end{proof}

\vskip25pt


\end{document}